\documentclass[11pt,oneside,english]{amsart}
\usepackage[T1]{fontenc}
\usepackage[utf8]{inputenc}
\setcounter{secnumdepth}{1}
\setcounter{tocdepth}{1}
\usepackage{color}
\usepackage{babel}
\usepackage{amstext}
\usepackage{amsthm}
\usepackage{amssymb}
\usepackage{graphicx}
\usepackage[unicode=true,pdfusetitle,
 bookmarks=true,bookmarksnumbered=false,bookmarksopen=false,
 breaklinks=false,pdfborder={0 0 0},backref=false,colorlinks=true]
 {hyperref}
\hypersetup{
 citecolor=blue}

\makeatletter
\numberwithin{equation}{section}
\numberwithin{figure}{section}
\theoremstyle{plain}
\newtheorem*{cor*}{\protect\corollaryname}
\theoremstyle{plain}
\newtheorem{thm}{\protect\theoremname}[section]
\theoremstyle{definition}
\newtheorem{defn}[thm]{\protect\definitionname}
\theoremstyle{remark}
\newtheorem{rem}[thm]{\protect\remarkname}
\theoremstyle{plain}
\newtheorem{prop}[thm]{\protect\propositionname}
\theoremstyle{plain}
\newtheorem{lem}[thm]{\protect\lemmaname}
\theoremstyle{plain}
\newtheorem{cor}[thm]{\protect\corollaryname}


\usepackage{geometry}
\setcounter{secnumdepth}{3}
\setcounter{tocdepth}{3}
\usepackage{amsmath}
\usepackage{amsthm}

\numberwithin{equation}{section}
\numberwithin{figure}{section}
\usepackage{enumitem}		
 \let\footnote=\endnote
\@ifundefined{lettrine}{\usepackage{lettrine}}{}


\theoremstyle{definition}
\newtheorem{thmx}{Theorem}

\def\d{\delta}

\def\G{\Gamma}

\def\g{\gamma}
\def\l{\lambda}

\def\D{\mathbb{D}}
\def\R{\mathbb{R}}

\def\N{\mathbb{N}}

\def\ep{\varepsilon}
\def\vphi{\varphi}

\def\cal{\mathcal}

\def\T{\mathcal{T}}
\def\F{\mathcal{F}}

\def\lt{\lambda_{T}}

\usepackage{float}

\address{Department of Mathematics,
The Ohio State University, Columbus, OH 43210, USA} 
\email{chen.8022@osu.edu}
\address{Department of Mathematics, The University of Chicago, Chicago, IL 60637, USA} 
\email{lkao@math.uchicago.edu}
\address{Department of Mathematics, The University of Chicago, Chicago, IL 60637, USA} 
\email{kihopark@math.uchicago.edu}


\keywords{}

\subjclass[2000]{}
\thanks{The second author gratefully acknowledges support from the National Science Foundation Postdoctoral Research Fellowship under Grant Number DMS 1703554.}

\usepackage[normalem]{ulem}

\def\G{\Gamma}
\def\g{\gamma}
\def\la{\lambda}

\def\D{\Delta}
\def\R{\mathbb{R}}

\def\T{\mathcal{T}}
\def\CC{\mathcal{C}}

\def\ep{\varepsilon}

\def\T{\mathcal{T}}
\def\F{\mathcal{F}}

\def\lt{\lambda_{T}}

\def\Sing{\mathrm{Sing}}
\def\Reg{\mathrm{Reg}}
\def\vp{\varphi}
\def\hol{H\"older }
\def\wt{\widetilde}

\makeatother

  \providecommand{\corollaryname}{Corollary}
  \providecommand{\definitionname}{Definition}
  \providecommand{\lemmaname}{Lemma}
  \providecommand{\propositionname}{Proposition}
  \providecommand{\remarkname}{Remark}
  \providecommand{\theoremname}{Theorem}
\providecommand{\theoremname}{Theorem}

\begin{document}

\title[Properties of equilibrium states for manifolds without focal points]{Properties of equilibrium states for geodesic flows over manifolds without focal points}

\author{Dong Chen, Lien-Yung Kao, and Kiho Park}

\date{\today}

\maketitle

\begin{abstract}
We prove that for closed rank 1 manifolds without focal points the equilibrium states are unique for H\"older potentials satisfying the pressure gap condition. In addition, we provide a criterion for a continuous potential to satisfy the pressure gap condition. Moreover, we derive several ergodic properties of the unique equilibrium states including the equidistribution and the $K$-property.
\end{abstract}


\section{Introduction}

Let $M$ be a closed rank 1 Riemannian manifold without focal points,
and let $\mathcal{F}=\{f_{t}\}_{t\in\R}$ be the geodesic flow on
the unit tangent bundle $T^{1}M$. Manifolds without
focal points are natural generalizations of nonpositively curved manifolds.
These two classes of manifolds share many geometric and dynamic
features. For example, the flat strip theorem holds for both classes of manifolds. As a result, the subset of $T^1M$ that does not display hyperbolic behavior with respect to the geodesic flow can be described in the same manner for these two classes of manifolds. Such subset is called the singular set, and denoted by $\Sing$. Nevertheless, lack of certain geometric properties, such as the convexity of the
Jacobi fields, impose further difficulty in analyzing the geodesic flows over manifolds without focal points; see Remark \ref{rem: lt vs lambda 1} and \ref{rem: lt vs lambda 2} for details.

In \cite{chen2018unique}, the authors were able to partially
overcome the difficulties and generalize the results of Burns, Climenhaga, Fisher, and Thompson \cite{burns2018unique} on the properties of the equilibrium states for rank 1 nonpositively curved manifolds to surfaces without focal points. In this work, we extend most results of \cite{chen2018unique} to rank 1 manifolds without focal points of arbitrary dimension. Manifolds without focal points are defined in Definition \ref{defn: no focal}, and rank 1 condition on $M$ means that there exists at least one rank 1 vector in $T^1M$; see Definition \ref{defn: rank}.

To put our results in context, we say a continuous potential
$\vphi\colon T^1M\to\R$ satisfies the \textit{pressure gap condition} if 
\begin{equation}
P(\vphi)>P(\Sing,\vphi).\label{eq: pressure gap}
\end{equation}
Note that $\Sing$ is closed and $\F$-invariant, and $P(\Sing,\vphi)$ refers to the pressure of $\vphi$ restricted to $\Sing$. 
\begin{thmx}\label{thm: 1} Let $M$ be a closed rank 1 Riemannian
manifold without focal points, $\mathcal{F}$ be the geodesic flow
over $M$, and $\vphi\colon T^{1}M\to\R$ be a \hol potential. If
$\vphi$ satisfies the pressure gap condition, then $\vphi$ has a unique equilibrium state $\mu_{\vphi}$. \end{thmx}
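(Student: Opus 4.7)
The plan is to apply the abstract uniqueness criterion of Climenhaga and Thompson, which gives uniqueness of the equilibrium state $\mu_\vp$ once the space of orbit segments for $\F$ admits a decomposition $\big(\mathcal{P},\mathcal{G},\mathcal{S}\big)$ satisfying: (I) $\mathcal{G}$ has the specification property at all scales; (II) $\vp$ has the Bowen property on $\mathcal{G}$; and (III) the obstruction pressure satisfies $P(\mathcal{P}\cup\mathcal{S},\vp)<P(\vp)$. The strategy mirrors Burns--Climenhaga--Fisher--Thompson in the nonpositively curved case and its surface extension in \cite{chen2018unique}, but the technical verifications must be carried out in arbitrary dimension without appealing to convexity of Jacobi fields.

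First, I would construct the decomposition. Fix $\eta>0$ small, and for each $(v,T)$ let $\regt(v)$ measure the amount of time the orbit segment $\{f_t v\}_{t\in[0,T]}$ spends a uniformly bounded distance away from $\Sing$. Declare an orbit segment to be in $\mathcal{G}(\eta)$ if both its initial and terminal subsegments of every length $t\le T$ have a proportion at least $\eta$ of their time in the regular set; let $\mathcal{P}$, resp. $\mathcal{S}$, collect the maximal initial, resp. terminal, subsegments that violate this condition. This gives the required factorization $\mathcal{P}\cdot\mathcal{G}\cdot\mathcal{S}$ of every orbit segment.

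Second, I would verify the three abstract hypotheses in turn. For the specification property on $\mathcal{G}(\eta)$, the key input is that orbits in $\mathcal{G}(\eta)$ enjoy uniform hyperbolic behavior: the stable and unstable Jacobi fields along them grow/decay with quantitative estimates coming from the rank 1 assumption together with sufficient time in $\Reg$. One then uses local product structure on the regular set to glue finitely many such segments with bounded-length transitions, exactly as in \cite{burns2018unique,chen2018unique} but upgraded to higher dimension. The Bowen property for a H\"older $\vp$ follows from the same hyperbolicity: close Bowen balls along $\mathcal{G}(\eta)$-segments have exponentially small diameter in a definite fraction of time, so H\"older regularity yields uniform bounds on $\int_0^T \vp(f_tv)\,\dd t - \int_0^T \vp(f_tw)\,\dd t$. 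The pressure gap (III) is then obtained by estimating, via the variational principle, that any invariant measure charging $\mathcal{P}\cup\mathcal{S}$ is essentially a convex combination of a measure on $\Sing$ and a measure whose weight on $\Reg$ is at most $\eta$; hence
\begin{equation*}
P(\mathcal{P}\cup\mathcal{S},\vp)\le \eta\,P(\vp)+(1-\eta)\,P(\Sing,\vp)+o(1),
\end{equation*}
and the hypothesis $P(\Sing,\vp)<P(\vp)$ lets us pick $\eta$ small enough that the right-hand side is strictly less than $P(\vp)$.

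The main obstacle is step two: establishing the specification and Bowen properties on $\mathcal{G}(\eta)$. In the nonpositively curved setting, convexity of $\|J(t)\|$ for Jacobi fields is used to convert "enough time in $\Reg$" directly into quantitative hyperbolic estimates. Without focal points this monotonicity is lost (cf. the remarks on $\lt$ versus $\lambda$ referenced in the introduction), so the argument must instead rely on asymptotic growth rates of Jacobi fields along long regular segments, together with the higher-dimensional analysis of the Lyapunov decomposition. Once these geometric estimates are in place in arbitrary dimension, the Climenhaga--Thompson framework applies and delivers the unique equilibrium state $\mu_\vp$.
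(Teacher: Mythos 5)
Your overall strategy is indeed the paper's: verify the Climenhaga--Thompson criterion for a bad--good--bad decomposition of orbit segments, prove specification and the Bowen property on the good collection, and use the pressure gap for the obstruction estimate. But there are genuine gaps. First, you never address the expansivity hypothesis of the criterion, namely $P^{\perp}_{\mathrm{exp}}(\vphi)<P(\vphi)$; in the paper this is a separate step, proved by showing via the flat strip theorem that the non-expansive set at small scales lies in $\Sing$, so that $P^{\perp}_{\mathrm{exp}}(\vphi)\le P(\Sing,\vphi)$ and the pressure gap closes it. Second, and more seriously, the step you yourself flag as ``the main obstacle'' is exactly the new content of the theorem, and your proposal leaves it unproved. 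The paper does not decompose by ``proportion of time spent away from $\Sing$'' (an indicator-type quantity, which moreover is not continuous, as the $\lambda$-decomposition framework of Call--Thompson requires); it introduces the continuous function $\lambda_T(v)$, the minimal two-sided logarithmic growth ratio $\log\bigl(\|J^u_w(T)\|/\|J^u_w(-T)\|\bigr)$ (and its stable analogue) over $w\perp v$, proves that vanishing of $\lambda_T$ for all $T$ characterizes $\Sing$, that $\lambda_T$ is monotone in $T$, and---via the Riccati representation $U^s=D'D^{-1}$ together with a double-integral lemma---converts a lower bound on the time average of $\lambda_T$ along a segment into uniform finite-time contraction of $d^u$ and $d^{cs}$. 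This is the substitute for the lost convexity of $\|J(t)\|$, and both the specification argument (an iterated shadowing construction anchored at a fixed regular closed reference orbit, using minimality of $W^{s/u}$ and monotonicity of the leafwise metrics) and the Bowen property rest on it. Appealing to ``asymptotic growth rates of Jacobi fields'' and ``the Lyapunov decomposition'' does not supply these uniform, quantitative estimates, so the heart of the proof is missing.

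There is also a gap in your pressure estimate: the inequality $P(\mathcal{P}\cup\mathcal{S},\vphi)\le \eta\,P(\vphi)+(1-\eta)\,P(\Sing,\vphi)+o(1)$ is asserted, not proved, and the $o(1)$ hides the real work. The paper's argument is that limits of empirical measures along bad segments give measures $\mu$ with $\int\lambda_T\,d\mu\le\eta$, that $\bigcap_{T,\eta}\mathcal{M}_{\lambda_T}(\eta)=\mathcal{M}(\Sing)$ (which again relies on the characterization of $\Sing$ by $\lambda_T$), and that the entropy map is upper semicontinuous---a nontrivial input coming from entropy-expansivity of geodesic flows without focal points. Without these ingredients, ``essentially a convex combination of a measure on $\Sing$ and a small remainder'' does not establish $P([\mathcal{P}]\cup[\mathcal{S}],\vphi)<P(\vphi)$ for suitable large $T$ and small $\eta$, which is what the criterion actually demands.
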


The origin of Theorem \ref{thm: 1} traces back to the work of Bowen
\cite{bowen1974some} where he showed that the bounded distortion property (also known as the Bowen property) on the potential and the expansivity and the specification property on the dynamical system guarantee the existence of a unique equilibrium state. Natural examples of such systems are uniformly hyperbolic systems. Since the work of Bowen, his result has been extended in various directions, and
one such direction aims at relaxing the assumptions on the base dynamical systems. Recently, Climenhaga and Thompson \cite{climenhaga2016unique}
developed a set of criteria that guarantees the existence of a unique
equilibrium state which applies to many non-uniformly hyperbolic
systems; see \cite{Climenhaga:2015wf}, \cite{burns2018unique}, \cite{chen2018unique}, and \cite{climenhaga2019equilibrium}. 

Among non-uniformly hyperbolic dynamical systems arising from
geometry, the first result in the same flavor as Theorem \ref{thm: 1}
was the work of Knieper \cite{knieper1998uniqueness}. He employed
Patterson-Sullivan theory to establish the uniqueness of the measure of maximal entropy for geodesic flows over nonpositively curved manifolds.
Twenty years later, Burns, Climenhaga, Fisher, and Thompson \cite{burns2018unique} extended
Knieper's result to equilibrium states for potentials with the pressure
gap. Their approach is inspired by the previously mentioned work of Bowen \cite{bowen1974some}. Previous work by the authors \cite{chen2018unique} used the same approach to further generalize this result to surfaces without focal points.
Finally, in this work, Theorem \ref{thm: 1} shows
that the same philosophy holds for higher dimensional manifolds without
focal points. 

From Theorem \ref{thm: 1}, it automatically follows that these equilibrium
states are ergodic. It is natural to ask whether such equilibrium states possess stronger ergodic properties. Indeed, for uniformly hyperbolic systems, the unique equilibrium states for \hol potentials have many stronger ergodic properties; these include being Bernoulli, and having equidistribution property by weighted periodic orbits as well as statistical properties such as the central
limit theorem and the large deviation property; see \cite{Parry:1990tn} and \cite{katok1997introduction}.

The following theorem partially answers the question above, 
and establishes several ergodic properties of the unique equilibrium
state $\mu_{\vphi}$ from Theorem \ref{thm: 1}.

\begin{thmx}\label{thm: 2} In the setting of Theorem \ref{thm: 1},
the unique equilibrium state $\mu_{\vphi}$ has the following properties: $\mu_{\vphi}$ has the $K$-property and
is fully supported. Moreover, $\mu_\vphi$ is equal to the weak-$*$ limit of the weighted regular closed geodesics, and $\mu_{\vphi}(\Reg)=1$. \end{thmx}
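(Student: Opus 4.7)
The plan is to deduce the four properties using the Climenhaga--Thompson orbit-decomposition framework underlying Theorem~\ref{thm: 1} together with standard consequences of uniqueness. I would address the properties in the order: $\mu_\varphi(\Reg)=1$, full support, equidistribution, and finally the $K$-property.

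For $\mu_\varphi(\Reg)=1$: if $c := \mu_\varphi(\Sing) > 0$, decompose $\mu_\varphi = c\mu_1 + (1-c)\mu_2$ with $\mu_1$ supported on $\Sing$ and $\mu_2$ on $\Reg$ (both $\mathcal{F}$-invariant, since $\Sing$ and $\Reg$ are). The variational principle yields $h_{\mu_1}(\mathcal{F}) + \int \varphi\, d\mu_1 \le P(\Sing,\varphi) < P(\varphi)$ and $h_{\mu_2}(\mathcal{F}) + \int \varphi\, d\mu_2 \le P(\varphi)$, so affinity of both terms in $\mu$ forces $h_{\mu_\varphi}(\mathcal{F}) + \int \varphi\, d\mu_\varphi < P(\varphi)$, contradicting that $\mu_\varphi$ is an equilibrium state. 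For full support and equidistribution, I would invoke the specification and Gibbs-type bounds on the ``good'' orbit-segment collection $\mathcal{G}$ (concentrated on $\Reg$) produced in the verification of Theorem~\ref{thm: 1}. Any nonempty open $U \subset T^1M$ is traversed by $\mathcal{G}$-segments, so the Gibbs lower bound gives $\mu_\varphi(U)>0$. The same framework, following the argument of \cite{burns2018unique}, yields that the weighted regular periodic-orbit measures
\begin{equation*}
\nu_T := \Bigl(\sum_{\gamma} e^{\int_\gamma \varphi}\Bigr)^{-1}\sum_{\gamma} e^{\int_\gamma \varphi}\, \delta_\gamma,
\end{equation*}
summed over regular closed geodesics of period at most $T$, converge weak-$*$ to $\mu_\varphi$.

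The $K$-property is the most delicate step. I would use Ledrappier's criterion: $\mu_\varphi$ has the $K$-property provided the product potential $\Phi(v,w) := \varphi(v) + \varphi(w)$ admits a unique equilibrium state for the product flow $\mathcal{F}\times\mathcal{F}$ on $T^1M \times T^1M$; since $\mu_\varphi \times \mu_\varphi$ is automatically an equilibrium state for $\Phi$, uniqueness will pin it down and yield the $K$-property for $\mu_\varphi$. To obtain this uniqueness, I would apply (a product version of) Theorem~\ref{thm: 1} in the product setting. The required product pressure gap
\begin{equation*}
P_{\mathcal{F}\times\mathcal{F}}(\Sing\times T^1M \,\cup\, T^1M\times\Sing,\,\Phi) \le P(\Sing,\varphi) + P(\varphi) < 2P(\varphi) = P(\Phi)
\end{equation*}
follows from the joining/Abramov bound on entropy of product systems together with the pressure gap for $\varphi$. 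The main obstacle will be verifying the Climenhaga--Thompson conditions (specification and Bowen property on a good decomposition) for the product flow: although each factor admits such a decomposition from the proof of Theorem~\ref{thm: 1}, the non-uniformly hyperbolic part of $\mathcal{F}\times\mathcal{F}$ is $\Sing\times T^1M \cup T^1M\times\Sing$ rather than $\Sing\times\Sing$, so the product decomposition must be constructed carefully and the geometric estimates re-derived in the combined setting where only one coordinate need be regular.
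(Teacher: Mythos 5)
Your arguments for $\mu_\vphi(\Reg)=1$, full support, and equidistribution are essentially the paper's: the paper derives equidistribution from the counting estimate $\frac{\beta}{t}e^{tP(\vphi)}\le \Lambda_{\Reg,\Delta}^{*}(\vphi,t)\le\beta^{-1}e^{tP(\vphi)}$ (proved via the closing lemma, Corollary \ref{cor:strong_closing_lemma}) together with Walters' construction of equilibrium states from weighted closed geodesics and uniqueness, and the other two properties as in \cite{chen2018unique}; your sketches are consistent with this, up to the cosmetic point that the paper's equidistribution is stated for windows $(t-\Delta,t]$ for \emph{every} $\Delta>0$ rather than for your ``period at most $T$'' normalization.

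The genuine gap is in the $K$-property, and it is exactly the step you label ``the main obstacle'' and then leave unresolved. The paper never verifies the Climenhaga--Thompson criteria for the product flow directly: constructing a decomposition of $(T^1M\times T^1M)\times[0,\infty)$ whose bad part is $L=(\Sing\times T^1M)\cup(T^1M\times\Sing)$ and proving specification and the Bowen property for it, when only one coordinate need be regular, is precisely the hard part, and your proposal offers no way to do it. Instead the paper invokes Call and Thompson \cite{call2019equilibrium}: because the decomposition used to prove Theorem \ref{thm: 1} is a $\lambda$-decomposition built from the continuous function $\lambda_T$, their Theorem 6.5 (together with product expansivity and almost expansivity, supplied by the flat strip theorem and the pressure gap) reduces the $K$-property to the single estimate $\sup\{P_\mu(\Phi):\int\widetilde{\lambda}_T\,d\mu=0\}<2P(\vphi)$, where $\widetilde{\lambda}_T(x,y)=\lambda_T(x)\lambda_T(y)$. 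The paper proves this estimate by showing (Lemma \ref{nullla}, resting on Lemma \ref{charsing} and monotonicity of $\lambda^{s/u}_T$ in $T$) that any such $\mu$ is supported on $L$, and then combining $P(L,\Phi)\le P(\Sing,\vphi)+P(\vphi)$ --- the bound you also wrote down --- with the pressure gap via the approximation argument of Proposition \ref{prop: CKP7.3}. So your pressure computation is the right ingredient, but without either citing the Call--Thompson product machinery or actually carrying out the product decomposition and its geometric estimates (which you only acknowledge as an obstacle), uniqueness of the equilibrium state for $\Phi$, and hence the $K$-property, is not established; indeed the choice of $\lambda_T$ and of the $\lambda$-decomposition form $(\mathcal{B}_T(\eta),\mathcal{G}_T(\eta),\mathcal{B}_T(\eta))$ in the proof of Theorem \ref{thm: 1} was made largely so that this abstract product result would apply.
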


Notice that for surfaces without focal points, the unique equilibrium state $\mu_\vphi$ has the Bernoulli property; see \cite[Theorem B]{chen2018unique}. Recall that Bernoulli is the strongest ergodic property
that implies the $K$-property, and the $K$-property then implies mixing.
We refer the readers to Section \ref{sec: proof of thm B} for more
details on the properties listed in Theorem \ref{thm: 2}. 

Next theorem extends \cite[Theorem B]{burns2018unique} and establishes a criterion on $\vphi$ that guarantees the pressure gap property.

\begin{thmx}\label{thm: 3} With $M$ and $\mathcal{F}$ as in Theorem
\ref{thm: 1}, suppose $\vphi\colon T^{1}M\to\R$ is a continuous
potential that is locally constant on a neighborhood of $\Sing$.
Then $\vphi$ has the pressure gap. \end{thmx}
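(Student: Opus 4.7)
The plan is to build an $\F$-invariant Borel probability measure $\mu$ on $T^{1}M$ whose variational sum $h_{\mu}(\F)+\int \vphi\, d\mu$ strictly exceeds $P(\Sing,\vphi)$. By hypothesis there exist an open neighborhood $U$ of $\Sing$ and a constant $c\in \R$ with $\vphi|_{U}\equiv c$. Since $\Sing$ is closed and $\F$-invariant and $\vphi|_{\Sing}\equiv c$, the variational principle applied to $\F|_{\Sing}$ gives
\[
P(\Sing,\vphi) = h_{\mathrm{top}}(\F|_{\Sing}) + c.
\]
Setting $K := c - \inf_{T^{1}M}\vphi \geq 0$, every $\F$-invariant probability measure satisfies $\int\vphi\,d\mu \geq c - K\mu(U^{c})$, so the pressure gap reduces to producing $\mu$ with small mass in $U^{c}$ and entropy slightly larger than $h_{\mathrm{top}}(\F|_{\Sing}) + K\mu(U^{c})$.

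The construction combines entropy-maximizing behavior on $\Sing$ with short regular excursions. For each small $\epsilon>0$, use a near entropy-maximizing ergodic measure on $\Sing$ together with the separated-sets definition of topological entropy to produce roughly $e^{T h_{\mathrm{top}}(\F|_{\Sing})}$ Bowen-separated orbit segments in $\Sing$ of length $T$. The rank 1 hypothesis provides a regular closed geodesic $\gamma$ of period $T_{\gamma}$, and the specification property for regular orbit segments established in \cite{chen2018unique} allows attaching $k\approx \epsilon T/T_{\gamma}$ independent excursions near $\gamma$ to each singular segment. The $\binom{T/T_{\gamma}}{k}$ combinatorial choices for the timing of the excursions then yield an exponential family of $(T,\eta)$-Bowen-separated orbit segments with normalized log-cardinality at least $h_{\mathrm{top}}(\F|_{\Sing}) + c_{0}\,\epsilon\log(1/\epsilon)$ for some $c_{0}>0$. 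A weak-$*$ limit of uniform counting measures on this family produces an $\F$-invariant $\mu_{\epsilon}$ with $\mu_{\epsilon}(U^{c})\lesssim \epsilon$ and $h_{\mu_{\epsilon}}(\F)\geq h_{\mathrm{top}}(\F|_{\Sing}) + c_{0}\,\epsilon\log(1/\epsilon)-o(1)$. Since the logarithmic factor dominates the linear penalty $K\epsilon$ once $\epsilon$ is small enough, the preceding reduction yields the pressure gap.

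The principal technical obstacle is to ensure that the constructed orbit segments actually remain inside $U$ between excursions, so that the bound $\mu_{\epsilon}(U^{c})\leq O(\epsilon)$ persists under the weak-$*$ limit. In the nonpositively curved setting of \cite{burns2018unique}, convexity of Jacobi fields gives uniform drift estimates that make this containment relatively routine; in the no-focal-points setting this convexity is lost, and the drift control must be redone using the weaker geometric inputs developed in \cite{chen2018unique}. Concretely, I would fix a nested pair $U'\Subset U$ on which $\vphi$ is still the constant $c$, carry out the specification at a scale smaller than $d(\overline{U'},T^{1}M\setminus U)$, and invoke the no-focal-points flat strip theorem together with the Morse-type lemmas from \cite{chen2018unique} to rule out drift across this gap during the long near-singular intervals. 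This drift control is where I expect the main geometric work of the proof to lie.
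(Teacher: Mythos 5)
Your combinatorial engine is the right one -- the paper's proof also gains entropy of order $-\alpha\log\alpha$ from the timing of ``regular excursions'' and beats a penalty that is linear in $\alpha$ -- but the proposal has a genuine gap at its center: you propose to glue each \emph{singular} orbit segment to regular excursions ``by the specification property for regular orbit segments.'' Specification in this setting (Proposition \ref{prop: specification}) is only available for orbit segments whose \emph{endpoints} lie in a uniformly regular compact set such as $\Reg_T(\eta)$; it says nothing about segments lying in $\Sing$, and singular segments cannot be shadowed or concatenated with uniform constants by anything you invoke. The actual content of the proof of Theorem \ref{thm: 3} is precisely the replacement of each singular segment by a genuine regular one: the map $\Pi_t=f_{-t}\circ\Pi^u\circ f_t\circ\Pi^s$ built from minimality of $W^{s/u}$ (Corollary \ref{cor: uniform minimality}), Proposition \ref{prop: 8.1 of bcft} (the image $w=\Pi_t(v)$ has $w,f_tw\in\Reg(\eta)$, stays $\delta$-close to $\Sing$ -- indeed in the same component of $B(\Sing,\delta)$ -- for $s\in[L,t-L]$), and Proposition \ref{prop: 8.2 of BCFT} (bounded multiplicity, so a $(t,2\delta)$-separated set in $\Sing$ produces a $(t,\delta)$-separated regular family of comparable weighted cardinality). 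These ingredients are what turn your excursion picture into actual flow orbits, what certify that distinct itineraries are Bowen-separated (at excursion times the orbit is $>5\delta/3$ from $\Sing$ because its endpoints lie in $\Reg(\eta)$, at the other grid times it is $<4\delta/3$ from $\Sing$), and what control the Birkhoff integrals; your closing paragraph on ``drift control'' gestures at one of these points but supplies no mechanism, and this regularization step -- which is also where the no-focal-points geometry genuinely enters, via $\lambda_T$ in place of $\lambda$ and the replacement of convexity by $\rho(s)\leq\rho(0)+\rho(t)$ -- is absent.

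Two further, more fixable, problems. First, ``locally constant on a neighborhood of $\Sing$'' does not mean there is one constant $c$ with $\vphi|_U\equiv c$: $\vphi$ may take different values $\Phi_i$ on different components, so your identity $P(\Sing,\vphi)=h_{\mathrm{top}}(\mathcal{F}|_{\Sing})+c$ and the reduction built on it are not valid as stated; the paper's argument tracks components throughout (this is why Proposition \ref{prop: 8.1 of bcft} insists that $f_s\Pi_t v$ stays in the same component of $B(\Sing,\delta)$ as $v$). Second, your count of roughly $e^{Th_{\mathrm{top}}(\mathcal{F}|_{\Sing})}$ separated singular segments must be performed at a \emph{fixed} scale compatible with the specification scale, and pressure at a fixed scale can a priori be strictly smaller than $P(\Sing,\vphi)$; the paper needs entropy-expansivity \cite{liu2016entropy} to get $P(\Sing,\vphi)=P(\Sing,2\ep,\vphi)$, together with local constancy to make $\Phi(\cdot,t)$ constant on Bowen balls so that Lemma \ref{lem: from CT} yields the weighted lower bound \eqref{eq: 5}. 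Without these steps your lower bound at the working scale, and hence the whole estimate, is not justified.
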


Since the zero potential $\vphi\equiv0$ trivially satisfies the assumption
in Theorem \ref{thm: 3}, as its corollary
we obtain the uniqueness of the measure of maximal entropy for geodesic flows over manifolds without focal points. Moreover,
such equilibrium states have the $K$-property. These results have also
recently been established by Liu, Liu, and Wang \cite{Liu:2018ud} and Liu, Wang, and Wu \cite{liu2018patterson}
via a different approach based on the previously mentioned work of Knieper \cite{knieper1998uniqueness} as well as Babillot \cite{Babillot:2002bq}: 
\begin{cor}
\ \begin{enumerate}[font=\normalfont] 

\item\cite[Theorem A]{liu2018patterson} Geodesic flows over manifolds without focal points have unique measures of maximal entropy.

\item\cite[Theorem 2.2]{Liu:2018ud} Such measures of
maximal entropy are mixing. 

\end{enumerate}
\end{cor}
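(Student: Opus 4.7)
The plan is simply to specialize Theorems~\ref{thm: 1}, \ref{thm: 2}, and \ref{thm: 3} to the zero potential $\vphi\equiv 0$. The zero potential is constant on all of $\ut$, so in particular it is \hol (with H\"older constant $0$ for every exponent) and it is locally constant on any open neighborhood of $\Sing$. Consequently the hypothesis of Theorem~\ref{thm: 3} is satisfied, which yields the pressure gap
\[
P(0) > P(\Sing,0),
\]
that is, $h_{\mathrm{top}}(\F) > h_{\mathrm{top}}(\F\!\restriction_{\Sing})$.

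With this pressure gap in hand, the hypotheses of Theorem~\ref{thm: 1} are met for $\vphi\equiv 0$, producing a unique equilibrium state $\mu_0$ for the zero potential. Since equilibrium states for $\vphi\equiv 0$ are exactly the measures of maximal entropy, part~(1) follows. For part~(2), Theorem~\ref{thm: 2} applied to $\mu_0$ asserts that $\mu_0$ has the $K$-property, and the $K$-property for a measure-preserving flow implies strong mixing, giving the claimed conclusion.

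The corollary is thus essentially a formal consequence of the three preceding theorems, so there is no genuine obstacle — the only thing to verify is the applicability of Theorem~\ref{thm: 3} to $\vphi\equiv 0$, which is immediate from the fact that constant functions are locally constant everywhere.
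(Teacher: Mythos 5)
Your proposal is correct and is exactly the argument the paper intends: apply Theorem~\ref{thm: 3} to $\vphi\equiv 0$ to obtain the pressure gap, then Theorem~\ref{thm: 1} gives the unique equilibrium state (i.e.\ the unique measure of maximal entropy), and Theorem~\ref{thm: 2} supplies the $K$-property, which implies mixing. No further comment is needed.
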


We remark that for surfaces without focal points, the uniqueness of
the measures of maximal entropy was first proved by 
Gelfert and Ruggiero \cite{Gelfert:2017tx}. A recently work of Climenhaga,
Knieper, and War \cite{climenhaga2019uniqueness} further extended
this result to geodesic flows over surfaces without conjugate points. 

The paper is organized as follows. In Section \ref{sec: 2}, we introduce our setting of manifolds without focal points. Moreover, we introduce a function to measure hyperbolicity on $T^1M$ and study its properties to be used later in the paper. In Section \ref{sec: 3}, we survey relevant results in thermodynamic formalism, and state the Climenhaga-Thompson criteria that will used to prove Theorem \ref{thm: 1} in Section \ref{sec: proof of thm A}. In Section \ref{sec: proof of thm B}, we establish the ergodic properties of the unique equilibrium states listed in Theorem \ref{thm: 2}. Lastly, in Section \ref{sec: proof for thm 3}, we prove Theorem \ref{thm: 3}.
\\\\
\noindent \textit{Acknowledgments.} The authors would
like to thank Keith Burns, Vaughn Climenhaga, François Ledrappier,
Dan Thompson, and Amie Wilkinson for inspiring discussions and supports.

\section{Geometry}\label{sec: 2}
\subsection{Manifolds with no focal points}

In this subsection, we introduce and survey geometric features of the manifold without focal points. These results can be found in \cite{Eberlein:1973hu,Pesin:1977vi,Eschenburg:1977kn,Burns:1983dw}.

Throughout this section $M$ denotes a closed Riemannian manifold, and we denote the geodesic flow on its unit tangent
bundle $T^{1}M$ by ${\cal F}=(f_{t})_{t\in\R}$. 
Recall that for any Riemannian manifold $M$,
we can naturally equip its tangent bundle $T^1M$ with the \textit{Sasaki
metric}; see \cite{doCarmo:2013tg}. In what follows, without stating specifically, the norm $||\cdot||$ on $TT^1M$ always refers to the Sasaki metric.

A \textit{Jacobi field} $J(t)$ along a geodesic $\g$ is a vector
field along $\g$ satisfying the \textit{Jacobi equation}: 
\begin{equation}\label{eq: jacobi eqn}
\textcolor{black}{J''(t)+R(J(t),\dot{\g}(t))\dot{\g}(t) = 0,}
\end{equation}
where $R$ is the Riemannian curvature tensor, and $'$ denotes the covariant derivative along $\g$.
 
A Jacobi field $J$ is \textit{orthogonal} if both
$J$ and $J'$ are orthogonal to $\dot{\g}$ at some $t_{0}\in\R$
(and hence for all $t\in\R)$. 
A Jacobi field $J$ is \textit{parallel} at $t_0$ if $J'(t_0)=0$. If $J'(t) = 0 $ for all $t \in \R$, then we say $J$ is \textit{parallel}.

\begin{defn}[No focal points]\label{defn: no focal}
A Riemannian manifold $M$ has \textit{no focal
points} if for any initial vanishing Jacobi field $J(t)$, its length
$\left\Vert J(t)\right\Vert $ is strictly increasing. 
\end{defn}



It is a classical result that one can identify the tangent space of
$T^{1}M$ with the space of orthogonal Jacobi fields ${\cal J}$.
Moreover, one can use this relation to define three ${\cal F}-$invariant
bundles $E^{u},E^{c},$ and $E^{s}$ in $TT^{1}M$. To be more precise, 
for each $v \in T^1M$, there exists a direct sum decomposition $T_vT^1M = H_v \oplus V_v$ into the horizontal and vertical subspaces, each equipped with the norm induced from the Riemannian metric on $M$. The \textit{Sasaki metric} on $T^1M$ is defined by declaring $H_v$ and $V_v$ \textcolor{black}{to be orthogonal.}
Denoting the space of orthogonal Jacobi
fields along a geodesic $\g$ by ${\cal J}(\g)$, 
the identification
between $T_{v}T^{1}M$ and ${\cal J}(\g_{v})$ is given by 
\[
T_{v}T^{1}M\ni\xi = (\xi_h,\xi_v)\mapsto J_{\xi}\in{\cal J}(\g_{v})
\]
 where $J_{\xi}$ is the unique Jacobi field characterized by $J_\xi(0) = \xi_h$ and $J'_{\xi}(0) = \xi_v$. Moreover, we have 
\begin{equation}
\left\Vert df_{t}(\xi)\right\Vert ^{2}=\left\Vert J_{\xi}(t)\right\Vert ^{2}+\left\Vert J'_{\xi}(t)\right\Vert ^{2}.\label{eq:metric_jacobi_unit_tangent_bundle}
\end{equation}

We define ${\cal J}^{s}(\g)$ to be the space of \textit{stable (orthogonal)
Jacobi fields} as 
\[
{\cal J}^{s}(\g)=\{J(t)\in{\cal J}(v):\ \left\Vert J(t)\right\Vert \ {\rm is}\ {\rm bounded\ for}\ t\geq0\},
\]
and ${\cal J}^{u}(\g)$ to be the space of \textit{unstable (orthogonal)
Jacobi fields} as 
\[
{\cal J}^{u}(\g)=\{J(t)\in{\cal J}(v):\ \left\Vert J(t)\right\Vert \ {\rm is}\ {\rm bounded\ for}\ t\leq0\}.
\]
Using these two linear spaces of ${\cal J}(\g)$ and the identification,
we define two subbundles $E^{s}(v)$ and $E^{u}(v)$ of $T_{v}T^{1}M$
as the following:
\begin{align*}
E^{s}(v):= & \{\xi\in T_{v}T^{1}M:\ J_{\xi}\in{\cal J}^{s}(v)\},\\
E^{u}(v):= & \{\xi\in T_{v}T^{1}M:\ J_{\xi}\in{\cal J}^{u}(v)\}.
\end{align*}

Last, we define $E^{c}(v)$ to be given by the flow direction.

\begin{defn}[Rank]\label{defn: rank} The \textit{rank} of $v \in T^1M$ is the dimension of the space of parallel Jacobi fields along $\gamma_v$.
We call $M$ a \textit{rank
1 manifold} if it has at least one rank 1 vector.
\end{defn}

\begin{defn} The \textit{singular set} is defined by $${\rm Sing}:=\{v\in T^{1}M:\ E^{s}(v)\cap E^{u}(v)\neq\emptyset\}.$$ 
The \textit{regular set} is defined as the complement of the singular set:
$${\rm Reg}:=T^{1}M\backslash{\rm Sing}.$$
\end{defn}

The following proposition summarizes known  facts regarding manifolds with no focal points.

\begin{prop}
\label{prop:no_focal_points} Let $M$ be a closed Riemannian manifold
without focal points. Then we have: 
\begin{enumerate}[font=\normalfont] 

\item \cite[\textcolor{black}{Theorem 3.2}]{Hurley:1986km} The geodesic flow ${\cal F}$ is topologically
transitive \textcolor{black}{if $M$ is rank 1}.  \footnotemark  \footnotetext[1]{\textcolor{black}{Ergodicity was claimed in \cite{Hurley:1986km} but the argument has an error. Nevertheless the proof for Theorem 3.2 is independent of ergodicity, and it remains valid.}}

\item\cite[Proposition 4.7, 6.2]{Pesin:1977vi} $\dim E^{u}(v)=\dim E^{s}(v)=n-1$,
and $\dim E^{c}(v)=1$ where $\dim M=n$. 

\item\cite[Theorem 4.11, 6.4]{Pesin:1977vi} The subbundles $E^{u}(v)$,
$E^{s}(v)$, $E^{cu}(v)$ and $E^{cs}(v)$ are ${\cal F}$--invariant
where $E^{cs}(v)=E^{c}(v)\oplus E^{s}(v)$ and $E^{cu}(v)=E^{c}(v)\oplus E^{u}(v)$.

\item\cite[Theorem 6.1, 6.4]{Pesin:1977vi} The subbundles $E^{u}(v)$,
$E^{s}(v)$, $E^{cu}(v)$ and $E^{cs}(v)$ are integrable to ${\cal F}$--invariant
foliations $W^{u}(v)$, $W^{s}(v)$, $W^{cu}(v)$ and $W^{cs}(v)$,
respectively. Moreover, $W^{u}(v)$ (resp. $W^{s}(v)$) consists of
vectors perpendicular to $H^{u}(v)$ (resp. $H^{s}(v)$) and toward
to the same side as $v$ (see below for the definition of the horospheres
$H^{s/u}(v)$). 

\item\cite[Lemma, p. 246]{Eschenburg:1977kn} $E^{u}(v)\cap E^{s}(v)\neq\emptyset$
if and only if $v\in{\rm Sing}$.

\item \cite[Theorem 1]{OSullivan:1976cf}, \cite[Theorem 2]{Eschenburg:1977kn}
\label{prop:the flat strip theorem}The Flat Strip Theorem: suppose
$M$ is simply connected and geodesics $\g_{1},\g_{2}$ are bi-asymptotic
in the sense that $d(\g_{1}(t),\g_{2}(t))$ is uniformly bounded for
all $t\in\R$. Then $\g_{1}$ and $\g_{2}$ bound a strip of flat
totally geodesically immersed surface.
\textcolor{black}{\item \cite[Section 5]{Eschenburg:1977kn} For any $J\in{\cal J}^{s}(\g)$(resp. ${\cal J}^{u}(\g)$), $||J(t)||$ is monotonely decreasing (resp. increasing) for all $t\in\mathbb{R}$.}
\end{enumerate}
\end{prop}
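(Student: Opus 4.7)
The plan is to observe that Proposition \ref{prop:no_focal_points} is a compendium of known facts, each with an explicit citation, so the proof reduces to directing the reader to the appropriate reference and, where useful, indicating the conceptual mechanism that produces each item from the no-focal-points hypothesis. I would organize the verification by grouping items according to the tool that drives them.

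First I would treat item (7), the monotonicity of $\|J(t)\|$ for $J\in\mathcal{J}^{s}(\g)$ and $J\in\mathcal{J}^{u}(\g)$, as the basic engine. The no-focal-points condition (Definition \ref{defn: no focal}) forces $\|J(t)\|$ to be strictly increasing for any Jacobi field vanishing at a point, and Eschenburg's argument in \cite{Eschenburg:1977kn} combines this with a limiting procedure (taking Jacobi fields that vanish at $\pm T$ and sending $T\to\infty$) to deduce the monotonicity on all of $\R$ for stable and unstable fields. From this monotonicity one extracts item (2) (the stable and unstable fields at a fixed $v$ form an $(n-1)$-dimensional linear space, because the map $J\mapsto J(0)$ is a linear isomorphism onto the orthogonal complement of $\dot{\g}$) and item (7) is also the key ingredient for the characterization in item (5): if $\xi\in E^{s}(v)\cap E^{u}(v)$ is nonzero, then $\|J_{\xi}(t)\|$ is both nonincreasing and nondecreasing, hence constant, producing a parallel Jacobi field that certifies $v\in\Sing$.

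Next I would package items (3), (4), and (6) together via the Busemann function / horosphere construction. Flow-invariance of the subbundles in item (3) is immediate from their characterization via boundedness of $\|df_{t}\xi\|$, which is preserved by $\F$ by (\ref{eq:metric_jacobi_unit_tangent_bundle}). Integrability in item (4) follows from Pesin's construction of horospheres $H^{s/u}(v)$ as level sets of Busemann functions, whose smoothness under no focal points is established in \cite{Pesin:1977vi}; the identification of $W^{s/u}(v)$ with the set of unit normals to these horospheres on the correct side is then a geometric computation. The Flat Strip Theorem (item (6)) is the deepest classical ingredient: here one uses convexity of $t\mapsto d(\g_{1}(t),\g_{2}(t))$, which requires precisely the no focal points hypothesis, and then lifts a bi-asymptotic pair to a totally geodesic flat strip via the standard argument of O'Sullivan and Eschenburg.

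Finally, for item (1), topological transitivity under the rank 1 assumption, I would invoke Hurley's argument \cite{Hurley:1986km}, noting the footnote that ergodicity is not used. The hard part, conceptually, is the passage from the no focal points assumption to the monotonicity (item (7)); every other item either piggybacks on it or on the Busemann/Jacobi field machinery that monotonicity enables. Since all of this is in the cited literature, the written proof will simply be a sentence per item crediting the reference, without reproducing the arguments.
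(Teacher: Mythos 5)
Your overall approach coincides with the paper's: Proposition \ref{prop:no_focal_points} is stated there purely as a survey of known results, with no proof beyond the item-by-item citations to Hurley, Pesin, Eschenburg, and O'Sullivan, and your plan of crediting each item to its reference (with item (7) and the horosphere machinery as the organizing tools) is exactly what the paper does implicitly. Your auxiliary sketches for items (2), (3), (5) are consistent with the cited sources.

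One conceptual claim in your sketch is wrong, though, and it is worth flagging because it touches the very point the paper is careful about. You assert that the Flat Strip Theorem (item (6)) is obtained from the convexity of $t\mapsto d(\g_{1}(t),\g_{2}(t))$ and that this convexity ``requires precisely the no focal points hypothesis.'' Convexity of the distance between geodesics (equivalently, of $\|J(t)\|$ for Jacobi fields) is a nonpositive-curvature phenomenon and is exactly what is \emph{not} available for manifolds without focal points; the paper emphasizes this repeatedly (see Remark \ref{rem: lt vs lambda 1}, and the proof of Proposition \ref{prop: 8.2 of BCFT}, where the convexity argument of \cite{burns2018unique} is replaced by the estimate $\rho(s)\leq\rho(0)+\rho(t)$ from \cite[Proposition 2.8]{katok1982entropy}). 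The proofs of O'Sullivan and Eschenburg cited for item (6) proceed instead through the divergence/monotonicity properties of stable and unstable Jacobi fields and Busemann-type functions, not through convexity of the distance function. Since your written proof would in any case only cite \cite{OSullivan:1976cf} and \cite{Eschenburg:1977kn}, this does not invalidate the proposal, but the stated mechanism should be corrected so as not to smuggle a nonpositive-curvature tool into the no-focal-points setting.
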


We shall introduce more metrics on $T^{1}M$ and the flow invariant
foliations introduced in Proposition \ref{prop:no_focal_points} in order to perform finer analysis. We write $d_{{\rm S}}$ for the distance
function on $T^{1}M$ induced by the Sasaki metric on $TT^{1}M$. \textcolor{black}{We will make use of another handy metric $d_{K}$ on $T^{1}M$:} 
\[
d_{K}(v,w):=\max\{d(\g_{v}(t),\g_{w}(t)):\ t\in[0,1]\}.
\]
Such metric $d_K$ also appeared in \cite{knieper1998uniqueness}.
It is not hard to see that $d_{{\rm S}}$ and $d_{K}$ are uniformly equivalent.
Thus, we will primarily work with \textcolor{black}{the metric} $d_{K}$ throughout
the paper. 
 
Furthermore, an \textit{intrinsic metric} $d^{s}$ on $W^{s}(v)$
for all $v\in T^{1}M$ is given by 
\[
d^{s}(u,w):=\inf\{l(\pi\g):\ \g:[0,1]\to W^{s}(v),\ \g(0)=u,\ \g(1)=w\}
\]
 where $l$ is the length of the curve in $M$, and the infimum is
taken over all $C^{1}$ curves $\g$ connecting $u,w\in W^{s}(v)$.
Using $d^{s}$ we define the \textit{local stable leaf} through
$v$ of size $\rho$ as: 
\[
W_{\rho}^{s}(v):=\{w\in W^{s}(v):\ d^{s}(v,w)\leq\rho\}.
\] 
Moreover, we locally define a similar intrinsic metric $d^{cs}$
on $W^{cs}(v)$ as: 
\[
d^{cs}(u,w)=|t|+d^{s}(f_{t}u,w)
\]
where $t$ is the unique time such that $f_{t}u\in W^{s}(w)$. This
metric $d^{cs}$ extends to the whole central stable leaf $W^{cs}(v)$.
We also define $d^{u}$, $d^{cu}$, $W_{\rho}^{u}(v),$ analogously. Notice
that when $\rho$ is small, these intrinsic metrics are uniformly equivalent
to $d_{{\rm S}}$ and $d_{K}$. 

\begin{rem}\label{rem: property s, cs}
A handy feature of these metrics is
that for any $v\in T^{1}M$, $\sigma\in\{s,cs\}$ and for any $u\in W^{\sigma}(w)$,
the map $t\mapsto d^{\sigma}(f_{t}u,f_{t}w)$ is a non-increasing function.
Indeed, let $\gamma$ be a curve in $W^s(v)$ connecting $u$ and $w$. Then $f_t\gamma$ lies in $W^s(f_tv)$. $\{f_s(\gamma)\}_{0\leq s\leq t}$ is a one-parameter family of geodesics and the associated Jacobi fields are all stable. Since stable Jacobi fields are non-increasing on manifolds without focal points (Proposition \ref{prop:no_focal_points} (7)), the length of $\gamma$ is not shorter than the length of $f_t(\gamma)$.

Similarly, for $\sigma\in\{u,cu\}$, the map $t\mapsto d^{\sigma}(f_{t}u,f_{t}w)$
is non-decreasing. These features are used in establishing the specification property in Section \ref{sec: proof of thm A}.
\end{rem}

Following Proposition \ref{prop:no_focal_points}, one can define
the \textit{stable horosphere} $H^{s}(v)\subset M$ and the \textit{unstable
horosphere} $H^{u}(v)\subset M$ as the projection of the respective
foliations to $M$:
\[
H^{s}(v)=\pi(W^{s}(v))\mbox{\ and\ } H^{u}(v)=\pi(W^{u}(v)).
\]

We now summarize some useful properties of the horospheres.

\begin{prop}\label{prop: horosphere}
\cite[Theorem 1 (i) (ii)]{Eschenburg:1977kn}\label{prop:horoshpere}
Let $M$ be a closed Riemannian manifold without focal points. Then
we have \begin{enumerate}[font=\normalfont] 

\item$H^{u}(v)$, $H^{s}(v)$ are $C^{2}$-embedded hypersurfaces
when lifted to the universal cover $\widetilde{M}$. 

\item For $\sigma\in\{s,u\}$, the symmetric linear operator $U^{\sigma}(v):T_{\pi v}H^{\sigma}(v)\to T_{\pi v}H^{\sigma}(v)$
given by $v\mapsto\nabla_{v}N$, i.e., the shape operator on $H^{\sigma}(v)$,
is well-defined, where $N$ is the unit normal vector field on $H^{\sigma}(v)$
toward the same side as $v$. 

\item$U^{u}$ is positive semi-definite and $U^{s}$
is negative semi-definite.
\end{enumerate}
\end{prop}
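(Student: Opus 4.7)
The plan is to realize both horospheres in the universal cover $\widetilde M$ as regular level sets of the Busemann function $b_v(x) := \lim_{t\to\infty}\bigl(d(x,\g_v(t))-t\bigr)$, and to read off the shape operator from the Jacobi field formalism of Section \ref{sec: 2}. Following the classical argument, $b_v$ is well-defined and $C^{1}$ with $\|\nabla b_v\|\equiv 1$. For the $C^{2}$ assertion in (1), the standard approach is to consider the family of geodesic spheres $S_r$ of radius $r$ centered at $\g_v(r)$ and show that their shape operators $U_r$ (computed at a base-point approaching $\pi v$) converge as $r\to\infty$. These operators satisfy the matrix Riccati equation $U_r'+U_r^{2}+R=0$ along the reversed geodesic; together with Proposition \ref{prop:no_focal_points}(7), this monotonicity yields uniform bounds and extracts a $C^{0}$ limit $U^{s}$. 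The limit is the shape operator of the limiting smooth hypersurface $H^{s}(v)$, which gives (1) and (2); the unstable case is obtained by reversing time. This is precisely the technical content of Theorem 1 (i)(ii) of \cite{Eschenburg:1977kn}, which I would invoke directly.

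For part (3) I would give a direct Jacobi field proof, which is the genuinely self-contained step. Given $w\in T_{\pi v}H^{s}(v)$, let $J=J^{s}_{w}$ be the unique stable orthogonal Jacobi field along $\g_v$ with $J(0)=w$. A standard variation argument identifies
\[
U^{s}(w)\;=\;J'(0),
\]
since the family of geodesics forward-asymptotic to $\g_v$ is exactly the family of perpendiculars to the stable horospheres. By Proposition \ref{prop:no_focal_points}(7), $\|J(t)\|$ is non-increasing in $t$, so differentiating at $t=0$ gives
\[
\langle U^{s}(w),w\rangle \;=\; \tfrac{1}{2}\,\tfrac{d}{dt}\Big|_{t=0}\|J(t)\|^{2} \;\leq\; 0,
\]
proving that $U^{s}$ is negative semi-definite. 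Interchanging stable with unstable and using the non-decreasing property yields $U^{u}\geq 0$. Symmetry of $U^{\sigma}$ follows from its identification with (up to sign) the Hessian of $b_v$, or equivalently from the vanishing of the Wronskian on orthogonal Jacobi fields.

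The main obstacle is the $C^{2}$ regularity of $b_v$ and the associated convergence of the shape operators of the geodesic spheres. In nonpositive curvature this is essentially immediate from the convexity of Jacobi fields; in the no focal point setting one only has the weaker monotonicity in Proposition \ref{prop:no_focal_points}(7), so the Riccati/comparison analysis is considerably more delicate. Once this input from \cite{Eschenburg:1977kn} is granted, the semi-definiteness argument above is elementary.
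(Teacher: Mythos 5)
Your proposal is correct, and it matches the paper in the only sense available: the paper gives no proof of this proposition at all, citing it wholesale as Theorem 1 (i)(ii) of Eschenburg's work on horospheres in manifolds without focal points. You defer to the same source for the genuinely hard content --- the $C^{2}$ regularity of the horospheres, i.e.\ the convergence of the shape operators $U_r$ of the geodesic spheres via the Riccati equation without the convexity available in nonpositive curvature --- which is the right call, since that analysis is precisely what Eschenburg's theorem supplies and is not reproduced anywhere in this paper. Your added value is the self-contained argument for part (3): the identification $U^{s}(v)w=(J^{s}_{w})'(0)$ is consistent with the paper's later formulas $J^{s}_{w}(t)=D(t)w$ and $U^{s}(f_{t}v)=D'(t)D(t)^{-1}$ (with $D(0)=\mathrm{Id}$), and then $\langle U^{s}(v)w,w\rangle=\tfrac12\tfrac{d}{dt}\big|_{t=0}\|J^{s}_{w}(t)\|^{2}\leq 0$ follows from the monotonicity in Proposition \ref{prop:no_focal_points}(7), with the unstable case by time reversal. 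One caveat worth keeping in mind: within the cited source, the monotonicity of $\|J^{s}(t)\|$ and the semi-definiteness of $U^{s}$ are established essentially in tandem by the same Riccati/comparison analysis, so your derivation of (3) from (7) is legitimate inside the logical structure of this paper (where (7) is quoted as known) but should not be read as an independent proof of Eschenburg's result; the paper itself simply cites both.
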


\subsection{Measure of hyperbolicity on $T^1M$}

For any $v\in T^1M$, let $\gamma_v$ be the unit speed geodesic starting with $v$. We introduce the following definition which serves as a measure of hyperbolicity on $T^1M$ throughout the paper.

\begin{defn}
For any $w\in T^1M$ with the same base point as $v$ and $w\perp v$, we denote by $J^u_w(t)$ (resp. $J^s_w(t)$) the unstable (resp. stable) Jacobi field along $\gamma_v$ with $J^u_w(0)=J^s_w(0)=w$. 
For any $T>0$, we define 
$$\lambda^u_T(v):=\min_{w \colon w\perp v}\left(\log \frac{||J^u_w(T)||}{||J^u_w(-T)||}\right) 
~~~
\text{ and } 
~~~
\lambda^s_T(v):=\min_{w \colon w\perp v}\left( -\log \frac{||J^s_w(T)||}{||J^s_w(-T)||}\right),$$
and
\begin{equation}\label{eq: lt}
\lambda_T(v):=\min\{\lambda^u_T(v), \lambda^s_T(v)\}.
\end{equation}
\end{defn}

Note that for any $v \in \Sing$, there exists an orthogonal Jacobi field $J$ along $\gamma_v$ such that $t \mapsto \|J(t)\|$ is a constant function. Hence, it follows that $\lt(v) = 0$ for any $v \in \Sing$ and $T>0$.
Using $\lt$, we set 
\begin{equation}\label{eq: Reg_T}
\Reg_T(\eta):=\{v\in T^1M \colon \lt(v) \geq \eta\}.
\end{equation}
Since $\lt|_\Sing \equiv 0$ for any $T>0$, $\Reg_T(\eta)$ is a compact subset of $\Reg$ for any $T,\eta>0$.

Throughout the paper, we will mostly be using $\lt$ as the measure of hyperbolicity on $T^1M$ and $\Reg_T(\eta)$ as the uniformity regular set.

However, we introduce another function $\lambda$ on $T^1M$ of similar nature. Let $\lambda^{u}(v)$ be the smallest eigenvalue of the second fundamental form $U^u(v)$, and $\lambda^s(v)$ be equal to $\lambda^u(-v)$. Define
\begin{equation}\label{eq: lambda}
\lambda(v) := \min(\lambda^u(v),\lambda^s(v)).
\end{equation}
Similar to $\Reg_T(\eta)$, we define $\Reg(\eta)$ using $\lambda$:
$$\Reg(\eta):=\{v\in T^1M \colon \lambda(v) \geq \eta\}.$$
We will make use of $\lambda$ and $\Reg(\eta)$ in Proposition \ref{prop: 8.1 of bcft}.

\begin{rem}\label{rem: lt vs lambda 1}
In \cite{burns2018unique} where results in this paper are proved for geodesic flows over nonpositively curved manifolds, $\lambda$ is used as the measure of hyperbolicity on $T^1M$. In their setting, the function $t \mapsto \|J(t)\|$ is convex for any Jacobi field $J$, and such convexity are used to deduce further estimates on $\lambda$. For instance, pointwise information such as $\lambda(v)=0$ can be used to extract information on the asymptotic behavior of the geodesic $\gamma_v$, and one can use this property to characterize the singular set via $\lambda$; see Lemma 3.2 and 3.3 of \cite{burns2018unique}.

For manifolds without focal points, we no longer have the convexity of the function $t \mapsto \|J(t)\|$. Although strictly weaker, we can still make use of the fact that the metrics $d^{s/u}$ are monotonic; see Remark \ref{rem: property s, cs}. So we need an alternative way to characterize the singular set suited to such monotonicity. This is done via $\lt$; see Lemma \ref{charsing}. 

In \cite{chen2018unique} where the results in \cite{burns2018unique} are extended to surfaces without focal points, the integral of $\lambda(f_sv)$ over $s \in [-T,T]$ served the similar purpose; there, such function was named $\lt$ and played the role of $\lambda$ from \cite{burns2018unique}. 

Our definition of $\lt$ defined in \eqref{eq: lt} is related to $\lt$ from \cite{chen2018unique} in that they agree when $M$ is a surface. However, in higher dimension, $\lt(v)$ as in \eqref{eq: lt} differs from the integral of $\lambda(f_sv)$ over $s \in [-T,T]$; other than the former being bigger than or equal to the latter, there is no direct relationship between the two.
\end{rem}

We now establish the properties of $\lt$ by studying the growth of Jacobi tensors. We survey the relevant results on Jacobi tensors below; see \cite{eschenburg1976growth} for details. 

Let $D^v$ be the stable Jacobi tensor along $\gamma_v$ (we may denote it by $D$ when there is no confusion). Namely, 
$$D:=\lim\limits_{s\to\infty}D_s,$$ where $D_s$ the Jacobi tensor along $\gamma_v$ uniquely determined by $D_s(0)=Id$ and $D_s(s)=0$. 
Any stable Jacobi field along $\gamma_v$ can be represented as 
\begin{equation}\label{eq: J tensor 1}
J^s_w(t)=D(t)w.
\end{equation}
The second fundamental form $U^s$ of the stable horosphere is given by 
\begin{equation}\label{eq: J tensor 2}
U^s(f_tv)=D'(t)D(t)^{-1}.
\end{equation}
Note that $U^s$ is symmetric and negative semi-definite. By simple computations, we know that $U^s$ satisfies the \textit{Riccati equation}:
$$U'+U^2+R=0.$$

\begin{lem}\label{Jgrowthlem}
For any $a<b$, and $w\perp v$, we have $$||J^s_w(b)||=||J^s_w(a)||\exp\left(\int_a^b \langle U^s(f_tv)(Y(t)), Y(t)\rangle dt\right),$$
where $Y(t):=D(t)w/||D(t)w||$.
\end{lem}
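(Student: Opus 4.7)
The plan is to compute the logarithmic derivative of $\|J^s_w(t)\|$ directly via the Jacobi tensor formalism set up just before the lemma, and then integrate. The ingredients are already in place: equation \eqref{eq: J tensor 1} expresses $J^s_w(t)=D(t)w$, and equation \eqref{eq: J tensor 2} gives the identity $D'(t)=U^s(f_t v)D(t)$. No deeper geometric input is needed.

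Concretely, I would start from $\|J^s_w(t)\|^2=\langle D(t)w, D(t)w\rangle$ and differentiate to get
\[
\tfrac{d}{dt}\|J^s_w(t)\|^2 = 2\,\langle D'(t)w, D(t)w\rangle.
\]
Substituting $D'(t)=U^s(f_t v)D(t)$ and dividing both sides by $2\|J^s_w(t)\|^2=2\|D(t)w\|^2$, one obtains
\[
\tfrac{d}{dt}\log\|J^s_w(t)\| \;=\; \Big\langle U^s(f_t v)\tfrac{D(t)w}{\|D(t)w\|},\tfrac{D(t)w}{\|D(t)w\|}\Big\rangle \;=\; \langle U^s(f_t v)Y(t),Y(t)\rangle.
\]
Integrating this identity from $a$ to $b$ and exponentiating yields the claimed formula. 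As a sanity check, $U^s$ being negative semi-definite (Proposition \ref{prop: horosphere}(3)) makes the integrand non-positive, consistent with the monotone non-increasing behavior of stable Jacobi fields on manifolds without focal points (Proposition \ref{prop:no_focal_points}(7)).

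The only point that requires some care is that $Y(t)$ must be well-defined on $[a,b]$, that is, $D(t)w\neq 0$ throughout. This follows from the no focal points assumption: a stable Jacobi field with $J^s_w(0)=w\neq 0$ cannot vanish at any $t_0$, for otherwise applying Definition \ref{defn: no focal} to the time-reversed geodesic through $(\gamma_v(t_0),-\dot\gamma_v(t_0))$ would force $\|J^s_w\|$ to be strictly monotonic in a way incompatible with the monotone non-increase given by Proposition \ref{prop:no_focal_points}(7). Thus $D(t)$ is invertible along the orbit, $U^s(f_t v)$ and $Y(t)$ are globally defined, and the computation above goes through without modification. This invertibility issue is really the only non-routine step; once it is in hand the lemma reduces to a one-line application of the chain rule and the Riccati identity encoded in \eqref{eq: J tensor 2}.
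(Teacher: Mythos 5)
Your proposal is correct and follows essentially the same computation as the paper: differentiate $\log\|J^s_w(t)\| = \log\|D(t)w\|$, use $U^s(f_tv)=D'(t)D(t)^{-1}$ from \eqref{eq: J tensor 2}, integrate from $a$ to $b$, and exponentiate. The extra remark on the non-vanishing of $D(t)w$ is a standard fact about stable Jacobi tensors on manifolds without conjugate/focal points that the paper takes for granted, so it does not change the substance of the argument.
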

\begin{proof} The claim follows from \eqref{eq: J tensor 1}, \eqref{eq: J tensor 2}, and the definition of $Y(t)$: 
\begin{align*}
\log \frac{||J^s_w(b)||}{||J^s_w(a)||}&=\frac{1}{2}\int_a^b (\log ||J^s_w(t)||^2)'dt=\frac{1}{2}\int_a^b \frac{(||D(t)w||^2)'}{||D(t)w||^2}dt= \int_a^b \frac{\langle D'(t)w, D(t)w \rangle}{||D(t)w||^2}dt\\
&=\int_a^b \langle D'(t)D(t)^{-1}(Y(t)), Y(t)\rangle dt= \int_a^b \langle U^s(f_tv)(Y(t)), Y(t)\rangle dt.
\end{align*}
This completes the proof.
\end{proof}
With Lemma \ref{Jgrowthlem}, the following corollary is immediate from the definition of $\lt^s$:
\begin{cor}\label{deflem}
$\displaystyle\lambda_T^s(v)=\min\limits_{w\perp v}\left( \int\limits_{-T}^T \langle -U^s(f_tv)(Y(t)), Y(t)\rangle dt\right)$, where $Y(t)=D(t)w/||D(t)w||$.
\end{cor}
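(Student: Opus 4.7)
The plan is to derive the corollary directly from Lemma \ref{Jgrowthlem} by a one-line substitution into the definition of $\lambda_T^s$, with essentially nothing else to do.

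First I would recall the definition
\[
\lambda^s_T(v)=\min_{w\perp v}\left(-\log\frac{\|J^s_w(T)\|}{\|J^s_w(-T)\|}\right),
\]
and then apply Lemma \ref{Jgrowthlem} with $a=-T$ and $b=T$ to each fixed $w\perp v$. This lemma yields
\[
\log\frac{\|J^s_w(T)\|}{\|J^s_w(-T)\|}
=\int_{-T}^{T}\langle U^s(f_tv)(Y(t)),Y(t)\rangle\,dt,
\]
where $Y(t)=D(t)w/\|D(t)w\|$. Negating both sides and pulling the minus sign into the inner product gives the integrand $\langle -U^s(f_tv)(Y(t)),Y(t)\rangle$.

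Finally, I would take the minimum over $w\perp v$ on both sides. Since the right-hand side depends on $w$ only through the starting vector of the Jacobi tensor, and since the definition of $\lambda^s_T(v)$ already minimizes over the same set, the two minima coincide and the identity
\[
\lambda^s_T(v)=\min_{w\perp v}\int_{-T}^{T}\langle -U^s(f_tv)(Y(t)),Y(t)\rangle\,dt
\]
follows. There is no real obstacle here; the only minor point to note is the well-definedness issue when $w=0$ (excluded by $w\perp v$ being understood as a nonzero orthogonal vector) and the fact that $D(t)$ is invertible along $\gamma_v$ so that $Y(t)$ makes sense, both of which are already built into the setup preceding Lemma \ref{Jgrowthlem}.
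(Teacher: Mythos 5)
Your proof is correct and matches the paper, which states the corollary as an immediate consequence of Lemma \ref{Jgrowthlem} applied with $a=-T$, $b=T$, followed by negation and minimization over $w\perp v$. Nothing further is needed.
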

From negative semi-definiteness of $U^s$, we also obtain the following property:
\begin{cor}\label{incrsl}
$\la^s_T$ is non-decreasing with respect to $T$. Similarly, $\la^u_T$ and $\la_T$ are non-decreasing with respect to $T$.
\end{cor}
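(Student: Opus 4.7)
The plan is to reduce the monotonicity to the pointwise sign of the horospherical second fundamental forms via the integral representation in Corollary~\ref{deflem}. The key preliminary observation is that in
$$\lambda^s_T(v) = \min_{w \perp v} \int_{-T}^T \bigl\langle -U^s(f_tv) Y(t), Y(t)\bigr\rangle\, dt,$$
the unit vector field $Y(t) = D(t)w/\|D(t)w\|$ along $\gamma_v$ depends on the direction $w$ but not on $T$. Hence, for each fixed $w$, the integrand is a single function of $t$ while only the interval of integration changes as $T$ varies. Moreover, the integrand is pointwise non-negative, since Proposition~\ref{prop: horosphere}(3) tells us that $U^s$ is negative semi-definite.

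Granting this, the argument I would run is as follows. For any $0 < T_1 \leq T_2$ and any fixed $w \perp v$, the pointwise non-negativity immediately gives
$$\int_{-T_2}^{T_2} \bigl\langle -U^s(f_tv)Y(t), Y(t)\bigr\rangle\, dt \;\geq\; \int_{-T_1}^{T_1} \bigl\langle -U^s(f_tv)Y(t), Y(t)\bigr\rangle\, dt.$$
Choosing $w = w^{*}$ to be a minimizer for $\lambda^s_{T_2}(v)$ on the left, and bounding the right-hand side from below by the minimum over all $w \perp v$, yields $\lambda^s_{T_1}(v) \leq \lambda^s_{T_2}(v)$.

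For $\lambda^u_T$ I would repeat the argument with the unstable Jacobi tensor in place of $D$: a direct computation analogous to Lemma~\ref{Jgrowthlem} yields $\lambda^u_T(v) = \min_{w\perp v} \int_{-T}^T \langle U^u(f_tv)\tilde Y(t), \tilde Y(t)\rangle\, dt$, and since $U^u$ is positive semi-definite by Proposition~\ref{prop: horosphere}(3), the integrand is again pointwise non-negative, so the same two-line comparison applies. Finally, $\lambda_T = \min(\lambda^s_T, \lambda^u_T)$ is a pointwise minimum of two non-decreasing functions of $T$, hence non-decreasing. I do not anticipate any real obstacle; the only substantive point is the observation that $Y(t)$ is independent of $T$, which is what allows the integrals over nested intervals to be compared directly for each fixed $w$ before taking the minimum.
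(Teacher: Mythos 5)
Your proposal is correct and is essentially the paper's argument: the paper also deduces the corollary directly from the integral representation in Corollary \ref{deflem} together with the negative semi-definiteness of $U^s$ (and, symmetrically, positive semi-definiteness of $U^u$), so that enlarging the interval of integration can only increase the value for each fixed $w$, and then one takes the minimum over $w$. Your observation that $Y(t)$ does not depend on $T$ is exactly the point that makes this comparison legitimate, and the final step for $\lambda_T$ as a minimum of two non-decreasing functions matches the paper as well.
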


The following lemma characterizes the singular set using $\lt$.

\begin{lem}\label{charsing}
$\lambda_T (v)=0$ for all $T \in \mathbb{R}$ if and only if $v \in \Sing$.
\end{lem}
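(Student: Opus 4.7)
The plan is to treat the two implications separately. For the easier direction $v \in \Sing \Rightarrow \lt(v) = 0$ for all $T$, I would start by picking a nonzero orthogonal Jacobi field $J$ witnessing the intersection $E^s(v) \cap E^u(v)$. Applying Proposition \ref{prop:no_focal_points}(7) to both the stable and unstable sides simultaneously, $\|J(t)\|$ is both non-increasing and non-decreasing, hence constant. Setting $w := J(0)$, uniqueness for the stable/unstable Jacobi field with prescribed value at $0$ gives $J^s_w = J = J^u_w$, so $\|J^s_w(\pm T)\|$ coincide (and likewise for $J^u_w$), which makes $\lambda^s_T(v), \lambda^u_T(v) \leq 0$. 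Combined with the general lower bound $\lambda^s_T, \lambda^u_T \geq 0$ (again by the monotonicity in Proposition \ref{prop:no_focal_points}(7) applied to each competing $w$), this gives $\lt(v) = 0$.

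For the converse, suppose $\lt(v) = 0$ for every $T > 0$. The first step is a dichotomy: by Corollary \ref{incrsl}, both $T \mapsto \lambda^s_T(v)$ and $T \mapsto \lambda^u_T(v)$ are non-decreasing and non-negative. If there existed $T_0, T_1 > 0$ with $\lambda^s_{T_0}(v) > 0$ and $\lambda^u_{T_1}(v) > 0$, then at $T := \max(T_0, T_1)$ both would remain strictly positive, forcing $\lt(v) > 0$ and contradicting the hypothesis. Hence, after relabeling, I may assume $\lambda^s_T(v) = 0$ for every $T > 0$.

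Next, since the ratio appearing in the definition of $\lambda^s_T$ is scale-invariant in $w$ and the unit sphere in $v^\perp$ is compact, the minimum is attained at unit vectors. So for each $n \in \N$ I can choose a unit $w_n \perp v$ with $\|J^s_{w_n}(n)\| = \|J^s_{w_n}(-n)\|$. Monotonicity of $t \mapsto \|J^s_{w_n}(t)\|$ on all of $\R$ (Proposition \ref{prop:no_focal_points}(7)) then forces this norm to be constant on $[-n,n]$. Writing stable Jacobi fields through the Jacobi tensor as $J^s_w(t) = D(t)w$ via \eqref{eq: J tensor 1}, where $D$ depends only on $v$ and not on $w$, I can extract a subsequential limit $w_n \to w_\infty$ in the compact unit sphere in $v^\perp$; then $J^s_{w_\infty}(t) = D(t) w_\infty$ has constant norm on all of $\R$. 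In particular $J^s_{w_\infty}$ is bounded on $(-\infty, 0]$, so it belongs to ${\cal J}^u(\gamma_v)$ as well, and the associated element of $T_v T^1M$ lies in $E^s(v) \cap E^u(v)$, so $v \in \Sing$.

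The main technical point is the compactness/diagonal argument in the converse: passing from a sequence $\{w_n\}$ witnessing constancy of $\|J^s_{w_n}(t)\|$ on growing intervals $[-n,n]$ to a single $w_\infty$ with $\|J^s_{w_\infty}(t)\|$ constant on all of $\R$. Expressing the stable field linearly through the Jacobi tensor $D$ makes the continuous dependence on $w$ transparent and sidesteps any delicate ODE perturbation argument, so once the dichotomy reduces the problem to one side, the conclusion falls out cleanly.
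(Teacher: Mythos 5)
Your proof is correct and takes essentially the same route as the paper: reduce by monotonicity and nonnegativity to $\lambda^s_T(v)=0$ for all $T$, pick minimizing unit vectors $w_n\perp v$ on $[-n,n]$, pass to a subsequential limit $w_\infty$, and obtain the Jacobi field $D(t)w_\infty$ of constant norm, whence $v\in\Sing$. The only cosmetic differences are that you argue directly from monotonicity of $\|J^s_w(t)\|$ rather than via Corollary \ref{deflem} and the positive semi-definiteness of $-U^s$ (which in the paper yields $(J^s_w)'\equiv 0$), and that you write out the easy direction which the paper dispatches in the remark preceding the lemma.
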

\begin{proof}
Without loss of generality, we may assume there exists $\{T_n\}_{n \in \N}$ such that $T_n\to \infty$ and $\lambda^s_{T_n}(v)=0$. By Corollary \ref{deflem} and positive semi-definiteness of $-U^s$, we can find $\{w_n\}_{n \in \N}$ satisfying $w_{n}\perp v$ such that,
$$U^s(f_tv)(D(t)w_n) = 0 \text{ for any }t\in [-T_n, T_n].$$

By passing to a subsequence if necessary, we may assume $w_n\to w$ for some $w \in T^1M$. Then $U^s(f_tv)(D(t)w)=0$ for all $t\in \R$. Thus $(J^s_w)'(t)\equiv 0$ and $J^s_{w}$ is a constant Jacobi field along $\gamma_v$. 
\end{proof}

\begin{rem}\label{rem: lt vs lambda 2}
As noted already in Remark \ref{rem: lt vs lambda 1}, $\lt$ defined in \eqref{eq: lt} is related but different from $\lt$ used in \cite{chen2018unique} for surfaces without focal points. Lemma \ref{charsing} is an example where such difference is manifested: $\lt$ used in \cite{chen2018unique}, defined as the integral of $\lambda$ along the orbit segment of length $2T$, may still be defined for manifolds without focal points of arbitrary dimension, but such $\lt$ does not necessarily satisfy Lemma \ref{charsing}. In fact, one motivations for introducing $\lt$ as in \eqref{eq: lt} is to establish the characterization of $\Sing$ using $\lt$ as done in Lemma \ref{charsing}. 
\end{rem}

We conclude this section by introducing and establishing the local product structure on compact subsets of $\Reg$.

\begin{defn}[Local product structure]\label{defn: local product}
Given $\kappa \geq 1$, $\delta>0$, and $v \in T^1M$, we say foliations $W^u$ and $W^{cs}$ have $(\kappa,\delta)$-\textit{local product structure at $v$} if for every $\ep \in (0,\delta]$ and any $w_1,w_2 \in B(v,\ep)$, the intersection of $W^u_{\kappa \ep}(w_1)$ and $ W^{cs}_{\kappa \ep}(w_2)$ consists of a single point denoted by $[w_1,w_2]$ which satisfies
$$\max \Big( d^u(w_1,[w_1,w_2]),d^{cs}(w_2,[w_1,w_2])\Big) \leq \kappa d_K(w_1,w_2).$$
\end{defn}

The following lemma establishes the local product structure of the foliations $W^u$ and $W^{cs}$ at every point of $v \in \Reg_T(\eta)$. Using the compactness of $\Reg_T(\eta)$ and the continuity of the distribution $E^u$ and $E^{cs}$, the proof of \cite[Lemma 4.4]{burns2018unique} works here without any modification.

\begin{lem}\cite[Lemma 4.4]{burns2018unique}\label{lem: local product structure}
For any $T,\eta>0$, there exist $\delta>0$ and $\kappa\geq 1$ such that the foliations $W^u$ and $W^{cs}$ has $(\kappa,\delta)$-local product structure at every $v \in \Reg_T(\eta)$.
\end{lem}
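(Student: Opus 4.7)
The plan is to exploit the fact that on the compact set $\Reg_T(\eta)$ the subbundles $E^u$ and $E^{cs}$ are continuous and uniformly transverse, and to pass from pointwise transversality to a uniform local product structure via a compactness argument. This is the strategy of \cite[Lemma 4.4]{burns2018unique}, and it transfers without change because the only ingredients it needs -- continuity of $E^u$ and $E^{cs}$, compactness of the uniformly regular set, and a characterization of $\Reg$ in terms of $E^u \cap E^s$ -- are available here from Proposition \ref{prop:no_focal_points}(5) and Lemma \ref{charsing}.

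First I would verify pointwise transversality on $\Reg_T(\eta)$. Since $\lt|_{\Sing}\equiv 0$, every $v \in \Reg_T(\eta)$ lies in $\Reg$, so Proposition \ref{prop:no_focal_points}(5) gives $E^u(v) \cap E^s(v)=\{0\}$, and together with the flow line $E^c(v)$ this yields the splitting $T_vT^1M = E^u(v) \oplus E^{cs}(v)$. Because $E^u$ and $E^{cs}$ are continuous subbundles and $\Reg_T(\eta)$ is compact, the angle between them is uniformly bounded below on $\Reg_T(\eta)$ by some $\theta_0>0$.

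Next, at each $v\in \Reg_T(\eta)$ I would work in a small adapted chart in which nearby leaves of $W^u$ and $W^{cs}$ are realized as $C^1$ graphs over $E^u(v)$ and $E^{cs}(v)$ respectively, with a uniform modulus of continuity coming from the continuity of the leafwise exponential maps. Uniform transversality together with the implicit function theorem then yields, for $w_1,w_2$ sufficiently close to $v$, a unique transverse intersection point
\[
[w_1,w_2] \in W^u_{\mathrm{loc}}(w_1) \cap W^{cs}_{\mathrm{loc}}(w_2),
\]
and a standard triangle/Lipschitz estimate produces a constant $C=C(\theta_0)$ with
\[
\max\Bigl(d^u(w_1,[w_1,w_2]),\, d^{cs}(w_2,[w_1,w_2])\Bigr) \leq C\, d_K(w_1,w_2).
\]

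Finally, by compactness one covers $\Reg_T(\eta)$ by finitely many such charts and defines $\kappa$ to be the maximum of the resulting local Lipschitz constants (and $1$), with $\delta$ chosen small enough that each ball $B(v,\delta)$ lies in one chart and the intrinsic metrics $d^u,d^{cs}$ are uniformly equivalent to $d_K$ on these balls. The main obstacle is ensuring that the intersection point $[w_1,w_2]$ actually lies in $W^u_{\kappa\ep}(w_1)\cap W^{cs}_{\kappa\ep}(w_2)$ rather than escaping along the leaf: this is exactly where the uniform lower bound $\theta_0$ on the transversality angle is essential, since it prevents the leaves from becoming nearly tangent and controls the leafwise distance needed to reach the intersection point in terms of the ambient distance $d_K(w_1,w_2)$.
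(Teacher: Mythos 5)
Your argument is correct and is essentially the paper's own: the paper simply observes that the proof of \cite[Lemma 4.4]{burns2018unique} applies verbatim with $\Reg_T(\eta)$ in place of $\Reg(\eta)$, using exactly the ingredients you spell out, namely $\Reg_T(\eta)\subset\Reg$ (since $\lt|_{\Sing}\equiv 0$), transversality of $E^u$ and $E^{cs}$ at regular vectors, and compactness plus continuity of the distributions to get a uniform angle bound and hence a uniform local product structure. Your sketch just fills in the standard graph/implicit-function-theorem details of that cited argument, so no further comment is needed.
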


\section{Thermodynamic formalism}\label{sec: 3}

In this section, we describe a general theory of thermodynamic formalism; see \cite{walters2000introduction} for details. Throughout the section, let $(X,d)$ be a compact metric space, and let $\mathcal{F} = (f_t)_{t \in \R}$ be a continuous flow on $X$ and $\vphi \colon X \to \R$ be a potential (i.e. continuous function) on $X$. 

\subsection{Topological pressure}
\begin{defn}
For any $t,\delta>0$ and $x,y\in X$,

\begin{enumerate}[font=\normalfont] 
\item \textit{$d_t$-metric} is defined by $d_t(x,y):= \max\limits_{0 \leq \tau \leq t} d(f_\tau  x, f_\tau y)$.
\item The \textit{Bowen ball} of radius $\delta$ and order $t$
at $x$ is defined as 
\[
B_{t}(x,\delta)=\{y\in X:d_t(x,y) <\d\}.
\]

\item We say a set $E$ is $(t,\delta)-$\textit{separated} if $d_t(x,y) \geq \d$ for
all $x,y\in E$ with $x\neq y$.
\end{enumerate}
\end{defn}

\begin{defn}[Finite length orbit segments] \label{defn: finite length orbit}
Any subset $${\cal C}\subset X\times[0,\infty)$$
can be identified with a collection of \textit{finite length orbit
segments. More precisely, every $(x,t) \in {\cal C}$ is identified with the orbit segment $\{f_{\tau}x:\ 0\leq\tau\leq t\}.$}

We define $\displaystyle \Phi(x,t):=\int_{0}^{t}\vp(f_{\tau}x)d\tau$ to be the integral of $\vp$ along an orbit segment $(x,t)$.
\end{defn}

Let ${\cal C}_{t}:=\{x\in X:\ (x,t)\in{\cal C}\}$ be the set of length
$t$ orbit segments in ${\cal C}$. We define 
\[
\Lambda({\cal C},\vp,\delta,t)=\sup\Big\{\sum_{x\in E}e^{\Phi(x,t)}:\ E\subset{\cal C}_{t}\ {\rm is}\ (t,\delta)-{\rm separated}\Big\}.
\]

\begin{defn}
[Topological pressure] The \textit{pressure} of $\vp$ on ${\cal C}$
is defined as 
\[
P({\cal C},\vp)=\lim_{\delta\to0}\limsup_{t\to\infty}\frac{1}{t}\log\Lambda({\cal C},\vp,\delta,t).
\]
When ${\cal C=}X\times[0,\infty)$, we denote $P(X\times[0,\infty),\vp)$
by $P(\vp)$ and call it the \textit{topological pressure} of $\vp$
with respect to ${\cal F}$.
\end{defn}

Let ${\cal M}({\cal F})$ be the set of ${\cal F}$-invariant probability
measures on $X$. For any $\mu \in \mathcal{M}(\F)$, we set
$$P_\mu(\vphi):=h_\mu(\F)+\int \vphi d\mu.$$
The \textit{variational principle} states that $P(\vphi)$ is the supremum of $P_\mu(\vphi)$ over all $\mu \in \mathcal{M}(\F)$:
\begin{equation}\label{eq: var prin}
P(\vphi) = \sup\Big\{P_\mu(\vphi) \colon \mu \in \mathcal{M}(\F)\Big\}.
\end{equation}
Any $\mu \in \mathcal{M}(\F)$ that achieves the supremum is called an \textit{equilibrium state} of $\vphi$.

\begin{rem}
$\ $\begin{enumerate}[font=\normalfont] 

\item When the entropy map $\mu\mapsto h_{\mu}$ is upper semi-continuous,
any weak-$*$ limit of a sequence of invariant measures approximating the
pressure is an equilibrium state. In particular, there exists at least
one equilibrium state for every continuous potential.

\item In our setting of geodesic flows over manifolds without focal
points, the upper semi-continuity of the entropy map is guaranteed
by the entropy-expansivity established in \cite{liu2016entropy}.

\end{enumerate}
\end{rem}

\subsection{Gurevich pressure}

In this subsection, we will introduce the Gurevich pressure and the
equidistribution property, and we will use these in Section
\ref{sec: proof of thm B} to establish ergodic properties of the equilibrium states. Roughly speaking, the Gurevich pressure is given by the growth
rate of weighted regular closed geodesics. It is equal to the topological pressure
when the system is uniformly hyperbolic. In general, these two notions
of pressure are different; see \cite{Gelfert:2014hn} for more details.

Before going further, we begin by setting up the notations.
As before, $M$ denotes a Riemannian manifold, ${\cal F}=(f_{t})_{t\in\R}$ the geodesic flow on $T^{1}M$, and $\vp:T^{1}M\to\R$ a continuous
potential. We denote the set of regular closed geodesics with length
in $(a,b]$ by ${\rm Per}_{R}(a,b]$. For any closed geodesic $\g$,
we write
\[
\Phi(\g):=\int_{\g}\vp=\int_{0}^{|\g|}\vp(f_{t}v)dt
\]
 where $v\in T^{1}M$ is tangent to $\g$ and $|\g|$ is the length
of $\g$. Next, given $t,\Delta>0$, we define 
\[
\Lambda_{\Reg,\Delta}^{*}(\vp,t):=\sum_{\g\in{\rm Per}_{R}(t-\D,t]}e^{\Phi(\g)}.
\]

\begin{defn}
[Gurevich pressure] Given $\Delta>0$,

\begin{enumerate}[font=\normalfont]

\item The \textit{upper regular} \textit{Gurevich pressure} $\overline{P}_{\Reg,\D}^{*}$
of $\vp$ is defined as 

\[
\overline{P}_{\Reg}^{*}(\vp):=\limsup_{t\to\infty}\frac{1}{t}\log\Lambda_{\Reg,\Delta}^{*}(\vp,t).
\]
(Note that it is not hard to verify that $\overline{P}_{\Reg}^{*}(\vp)$ does not depend on $\Delta$.)
\item The\textit{ lower} \textit{regular Gurevich pressure} $\underline{P}_{\Reg,\D}^{*}$
of $\vp$ is defined as 
\[
\underline{P}_{\Reg,\D}^{*}(\vp):=\liminf_{t\to\infty}\frac{1}{t}\log\Lambda_{\Reg,\Delta}^{*}(\vp,t).
\]
If $\overline{P}_{{\rm Reg}}^{*}(\vp)=\underline{P}_{\Reg,\D}^{*}(\vp)$,
then we call this value the \textit{regular Gurevich pressure }and
denote it by $P_{\Reg}^{*}(\vp)$.

\end{enumerate}
\end{defn}

In what follows, we provide the precise definition of what it means for a measure to be
equidistributed along weighted regular closed geodesics.
\begin{defn}[Equidistribution]\label{defn: equidistribution} For a potential $\vp:T^{1}M\to\R$, we say $\mu$
is the \textit{weak-$*$ limit of $\vp-$weighted regular closed geodesics},
or \textit{equidistributed along weighted regular closed geodesics},
if for \textit{every} $\D>0$ we have
\begin{equation}\label{eq: equidistribution}
\mu=\lim_{t\to\infty}\frac{\sum_{\g\in{\rm Per}_{R}(t-\D,t]}e^{\Phi(\g)}\delta_{\g}}{\Lambda_{\Reg,\Delta}^{*}(\vp,t)}
\end{equation}
where $\delta_{\g}$ is the normalized Lebesgue measure along the
closed geodesic $\g$.
\end{defn}

\begin{rem}
We point out that in \cite{chen2018unique} there
also is a notion of a measure $\mu$ being the \textit{weak-$*$ limit
of $\vp-$weighted regular closed geodesics.} However, the notion defined here in Definition \ref{defn: equidistribution} is stronger. In fact, the notion in \cite{chen2018unique} only requires that there exists some $\Delta>0$ such that \eqref{eq: equidistribution} holds, whereas in the current paper it requires that \eqref{eq: equidistribution} holds independent of $\Delta$. The latter definition was first proposed by Parry \cite{parry1988equilibrium} for Axiom A flows.
\end{rem}

The next proposition can be found in \cite[Theorem 9.10]{walters2000introduction},
which states that equilibrium states can be constructed via weighted regular closed
geodesics.

\begin{prop}
\cite[Theorem 9.10]{walters2000introduction} \label{prop:walters_eq_states}Given
$\D>0$, suppose there exists $\{t_{k}\}_{k \in \N}$ with $t_k \to \infty$ such that 
\[
\lim_{k\to\infty}\frac{1}{t_{k}}\log {\Lambda_{\Reg,\Delta}^{*}(\vp,t_{k})=P(\vp)}
\]
and 
\[
\lim_{k\to\infty}\frac{\sum_{\g\in{\rm Per}_{R}(t_{k}-\D,t_{k}]}e^{\Phi(\g)}\delta_{\g}}{\Lambda_{\Reg,\Delta}^{*}(\vp,t_{k})}=\mu,
\]
 then $\mu$ is an equilibrium state.
\end{prop}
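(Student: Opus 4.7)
The plan is to follow the classical Misiurewicz-style argument behind the variational principle, adapted to flows and atomic measures along closed geodesics. First observe that $\mu$ is $\F$-invariant: each $\delta_\g$ is the normalized Lebesgue measure on a closed orbit (hence $\F$-invariant), each $\mu_k$ is a convex combination of $\F$-invariant measures, and $\F$-invariance passes to weak-$*$ limits. Since $P_\mu(\vp) \leq P(\vp)$ holds automatically by the variational principle \eqref{eq: var prin}, it suffices to prove $h_\mu(\F) + \int \vp\, d\mu \geq P(\vp)$. Using $h_\mu(\F) = h_\mu(f_1)$ and $\int \vp\, d\mu = \int \vp_1\, d\mu$ for $\vp_1(x) := \int_0^1 \vp(f_s x)\, ds$, the task reduces to $h_\mu(f_1) + \int \vp_1\, d\mu \geq P(\vp)$.

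For each $\g \in {\rm Per}_R(t_k - \Delta, t_k]$, pick a representative point $x_\g \in \g$, and form the atomic measure
\[
\nu_k := \frac{1}{\Lambda_{\Reg, \Delta}^*(\vp, t_k)} \sum_\g e^{\Phi(\g)} \delta_{x_\g}.
\]
Set $n_k := \lfloor t_k \rfloor$ and $\sigma_k := \frac{1}{n_k} \sum_{i=0}^{n_k - 1} (f_1)^i_* \nu_k$. One verifies that $\sigma_k \to \mu$ in the weak-$*$ topology, since each $\delta_\g$ is the orbital average of $\delta_{x_\g}$ over $[0, |\g|]$ and $|\g|$ differs from $n_k$ by at most $\Delta + 1$, contributing only an $O(1/n_k)$ discrepancy. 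Next, fix a finite partition $\alpha$ of $T^1M$ with diameter at most $\epsilon$ and $\mu(\partial \alpha) = 0$. The technical core of the proof is to show that for $\epsilon$ small enough, each element of $\alpha^{n_k} := \bigvee_{i=0}^{n_k - 1} f_1^{-i}\alpha$ contains at most one $x_\g$, i.e.\ $\{x_\g\}$ is $(n_k, \epsilon)$-separated. Granting this, the partition entropy $H_{\nu_k}(\alpha^{n_k})$ coincides with the Shannon entropy of the weights $p_\g := e^{\Phi(\g)}/\Lambda_{\Reg, \Delta}^*$, so
\[
H_{\nu_k}(\alpha^{n_k}) + \int S_{n_k}\vp_1 \, d\nu_k = \log \Lambda_{\Reg, \Delta}^*(\vp, t_k) + O(1),
\]
where $S_{n_k}\vp_1(x) := \sum_{i=0}^{n_k - 1} \vp_1(f_1^i x)$ and the $O(1)$ absorbs the bounded difference between $\Phi(\g)$ and $S_{n_k}\vp_1(x_\g)$.

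Finally, invoking the standard Misiurewicz-type concavity inequality, for any fixed $m$ and $k$ large we have
\[
\frac{1}{n_k} H_{\nu_k}(\alpha^{n_k}) \leq \frac{1}{m} H_{\sigma_k}(\alpha^m) + \frac{2m \log |\alpha|}{n_k},
\]
together with $\tfrac{1}{n_k} \int S_{n_k}\vp_1\, d\nu_k = \int \vp_1 \, d\sigma_k$. Dividing the identity of the previous paragraph by $n_k$ and combining, then letting $k\to\infty$ (using $\sigma_k \to \mu$, $\mu(\partial \alpha) = 0$ so $H_{\sigma_k}(\alpha^m) \to H_\mu(\alpha^m)$, the upper semi-continuity of the entropy map which holds here by entropy-expansivity, and the pressure hypothesis), then $m \to \infty$, and finally refining $\alpha$ yields $P(\vp) \leq h_\mu(f_1) + \int \vp_1 \, d\mu$, as required. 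The principal obstacle is the $(n_k, \epsilon)$-separation of $\{x_\g\}$: this is where the hyperbolic geometry on the regular set must be genuinely used, in the same spirit as the weak expansivity of the geodesic flow on $\Reg$ discussed elsewhere in the paper. The closed-orbit versus separated-set distinction is otherwise the only substantive deviation from Walters's proof of \cite[Theorem 9.10]{walters2000introduction}.
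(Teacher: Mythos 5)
The paper offers no argument of its own here: the proposition is taken as a direct citation of \cite[Theorem 9.10]{walters2000introduction}, adapted to the flow/closed-geodesic setting, so the only meaningful comparison is with that standard Misiurewicz-type argument, which is indeed the skeleton you reconstruct (invariance of $\mu$, reduction to $f_1$ and $\vp_1$, Shannon entropy of the weights, the concavity inequality, $\mu(\pa\alpha)=0$). The problem is that you explicitly defer the one step that carries all the content: the claim that the representatives $x_\g$ of the geodesics in ${\rm Per}_R(t_k-\D,t_k]$ are $(n_k,\ep)$-separated is ``granted'' and then named ``the principal obstacle'' at the end, so the proof is not actually closed. Moreover, as stated the claim is doubtful: nothing prevents two \emph{distinct} regular closed geodesics whose lengths lie in the same window of width $\D$ from $\ep$-shadowing one another over a full period once $\D$ is large relative to $\ep$, so ``at most one $x_\g$ per atom of $\alpha^{n_k}$'' is the wrong target. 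What one needs, and what can actually be proved by a universal-cover counting argument using the flat strip theorem (in the spirit of Proposition \ref{prop: 8.2 of BCFT}, or of the estimates from \cite[Proposition 6.4]{burns2018unique} invoked right after Proposition \ref{prop:equi_distrib}), is a uniform bound $C=C(\ep,\D)$ on the number of such closed geodesics whose orbit segments meet a single Bowen ball $B_{t_k}(w,\ep)$; a bounded multiplicity costs only an additive $\log C$ in the comparison between $H_{\nu_k}(\alpha^{n_k})$ and the Shannon entropy of the weights $p_\g$, which is harmless after dividing by $n_k$. Without some such counting input, the identity in your third paragraph is unjustified and the lower bound on $P_\mu(\vp)$ collapses.

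There is also a genuine error in the discretization step. You assert that $\sigma_k\to\mu$ with an $O(1/n_k)$ discrepancy because $\delta_\g$ is the orbital average of $\delta_{x_\g}$ and $|\g|$ differs from $n_k$ by at most $\D+1$. But for a test function $\psi$, the quantity $\frac{1}{n_k}\sum_{i=0}^{n_k-1}\psi(f_i x_\g)$ is a unit-mesh Riemann sum for $\frac{1}{|\g|}\int_0^{|\g|}\psi(f_s x_\g)\,ds$; its error is governed by the oscillation of $s\mapsto\psi(f_s x_\g)$ on intervals of length one, not by $1/n_k$, and there is no reason for it to vanish as $k\to\infty$. The standard remedies are either to run the Misiurewicz argument directly for the flow (as in Parry's treatment \cite{parry1988equilibrium} for Axiom A flows), or to replace $\delta_{x_\g}$ by its time-one smearing $\int_0^1 (f_s)_*\delta_{x_\g}\,ds$, whose Cesàro average under $f_1$ really is within $O(\D/n_k)$ of $\delta_\g$ in the weak-$*$ topology; either choice requires restating and rechecking the entropy comparison. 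So the overall route is the right one, but both the separation/multiplicity input and the passage from continuous orbital averages to discrete ones need to be supplied before this counts as a proof.
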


Therefore, we summary results above in the following proposition.

\begin{prop}
\label{prop:pressure_equal_equidistribution}If $\vp$ has a unique
equilibrium $\mu_{\vp}$ and $P_{\Reg}^{*}(\vp)=P(\vp)$, then $\mu_{\vp}$ is a weak-$*$ limit of $\vp-$weighted
regular closed geodesics.
\end{prop}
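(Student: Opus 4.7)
\textbf{Proof plan for Proposition \ref{prop:pressure_equal_equidistribution}.} The plan is to combine Proposition \ref{prop:walters_eq_states} with the uniqueness hypothesis, and reduce the claim to a standard compactness-plus-uniqueness argument carried out for each fixed $\Delta>0$ separately. The key observation is that the hypothesis already provides the full sequential limit of $\frac{1}{t}\log\Lambda^{*}_{\Reg,\Delta}(\vp,t)$; so any weak-$*$ accumulation point of the weighted empirical measures is automatically an equilibrium state, and uniqueness pins it down.

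Concretely, I would fix $\Delta>0$ and note that since $\overline{P}^{*}_{\Reg}(\vp)$ is independent of $\Delta$ and the hypothesis $P^{*}_{\Reg}(\vp)=P(\vp)$ forces the upper and lower regular Gurevich pressures to coincide, we obtain
$$\lim_{t\to\infty}\frac{1}{t}\log\Lambda^{*}_{\Reg,\Delta}(\vp,t)=P(\vp).$$
Set
$$\mu_{t}:=\frac{\sum_{\g\in\mathrm{Per}_{R}(t-\Delta,t]}e^{\Phi(\g)}\delta_{\g}}{\Lambda^{*}_{\Reg,\Delta}(\vp,t)},$$
which is a convex combination of the flow-invariant measures $\delta_{\g}$ and hence lies in $\mathcal{M}(\F)$; the latter is compact in the weak-$*$ topology. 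To show $\mu_{t}\to\mu_{\vp}$, I would argue by contradiction: if not, there exist a sequence $t_{k}\to\infty$ and a weak-$*$ neighborhood $U$ of $\mu_{\vp}$ with $\mu_{t_{k}}\notin U$ for every $k$. Weak-$*$ compactness then gives a subsequence $\mu_{t_{k_{j}}}$ converging to some $\mu\in\mathcal{M}(\F)\setminus U$. Along this subsequence, both hypotheses of Proposition \ref{prop:walters_eq_states} hold, so $\mu$ is an equilibrium state of $\vp$; uniqueness forces $\mu=\mu_{\vp}$, contradicting $\mu\notin U$. Hence $\mu_{t}\to\mu_{\vp}$ for the fixed $\Delta$, and since $\Delta>0$ was arbitrary, $\mu_{\vp}$ is the weak-$*$ limit of $\vp$-weighted regular closed geodesics in the sense of Definition \ref{defn: equidistribution}.

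I do not expect a serious obstacle here, as both ingredients, the pressure limit and the equilibrium-state extraction from weighted periodic orbits, are already available. The only point requiring a sentence of care is that the convergence must be established for \emph{every} $\Delta>0$ (as demanded by the stronger definition of equidistribution adopted in this paper), but this is automatic from the $\Delta$-independence of $\overline{P}^{*}_{\Reg}(\vp)$ together with the assumed equality $P^{*}_{\Reg}(\vp)=P(\vp)$.
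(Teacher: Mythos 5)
Your argument is correct and is exactly the route the paper intends: the paper states this proposition as an immediate consequence of Proposition \ref{prop:walters_eq_states} together with uniqueness, and your compactness-plus-uniqueness extraction (for each fixed $\Delta>0$, using that the full limit $\lim_{t\to\infty}\frac{1}{t}\log\Lambda^{*}_{\Reg,\Delta}(\vp,t)=P(\vp)$ passes to every subsequence) is precisely that implicit argument. No gaps beyond the minor point you already handle, namely that the weighted empirical measures lie in the weak-$*$ compact set $\mathcal{M}(\F)$ and are well-defined for large $t$.
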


\subsection{Criterion for the unique equilibrium state}
In this subsection, we describe a set of criteria recently developed by Climenhaga and Thompson \cite{climenhaga2016unique} that establishes the existence of the unique equilibrium state. More specifically, \cite{climenhaga2016unique} extended Bowen's work on uniformly hyperbolic systems \cite{bowen1974some} as well as Franco's work on for hyperbolic flows \cite{Franco:1977jy} and developed based on it a set of criteria that has been successfully apply to many non-uniformly hyperbolic systems; see \cite{Climenhaga:2015wf}, \cite{burns2018unique}, \cite{chen2018unique}, and \cite{climenhaga2019equilibrium}.
First, we introduce various properties on the system $(X,\F)$ and the potential $\vphi \colon X \to \R $ necessary to state the Climenhaga-Thompson criteria.

\begin{defn}[Specification]\label{defn: specification} We say $\mathcal{C} \subset X \times [0,\infty)$ has \textit{specification} at scale $\rho>0$ if there exists $\T = \T(\rho)>0$ such that for every finite collection of elements in $\mathcal{C}$, i.e., $(x_1,t_1),\ldots,(x_n,t_n) \in \mathcal{C}$, and every $T_1,\ldots,T_n \in \R$ satisfying $T_1=0$ and $T_{i+1} - T_i \geq t_i +\T$ for each $1 \leq i  \leq n-1$, there exists $w \in T^1M$ such that
$$f_{T_i}w \in B_{t_i}(v_i,\rho) \text{ for each }1 \leq i \leq n.$$

We say $\mathcal{C}$ has \textit{specification} if it has specification at all scales. If $\mathcal{C} = X\times [0,\infty)$ has specification, then we say the flow has specification.
\end{defn}

\begin{defn}[Bowen property] We say a potential $\vphi \colon X \to \R$ has \textit{Bowen property on} $\mathcal{C} \subset X \times [0,\infty)$ if there exist $\ep,K>0$ such that for any $(x,t) \in \mathcal{C}$, we have
$$\sup\limits_{y \in B_t(x,\ep)}|\Phi(x,t) - \Phi(y,t)|\leq K$$
where $\displaystyle \Phi(x,t) := \int\limits_0^t \vphi(f_\tau x)d\tau$.
\end{defn}

\begin{defn}
[Decomposition of orbit segments] A \textit{decomposition} of $X\times[0,\infty)$
consists of three collections ${\cal P}$, ${\cal G}$, ${\cal S}\subset X\times[0,\infty)$
such that:\begin{enumerate}[font=\normalfont] 

\item There exist $p,g,s:X\times[0,\infty)\to\R$ such that for each
$(x,t)\in X\times[0,\infty)$, we have $t=p(x,t)+g(x,t)+s(x,t)$,
\item $(x,p(x,t))\in{\cal P}$, $(f_{p(x,t)}x,g(x,t))\in{\cal G}$,
and $(f_{p(x,t)+g(x,t)}x,s(x,t))\in{\cal S}$.

\end{enumerate}
\end{defn}

\begin{rem}\label{rem: lambda decomp}
Given any (lower semi-)continuous function $\lambda \colon X \to [0,\infty)$ and $\eta>0$, we define $B(\eta),G(\eta) \subset X \times [0,\infty)$ as follows:
\begin{equation}\label{eq: B(eta)}
B(\eta):= \Big\{ (x,t) \colon \frac{1}{t} \int_0^t \lambda(f_sx)ds \leq \eta\Big\}
\end{equation}
and 
\begin{equation}\label{eq: G(eta)}
G(\eta):= \Big\{ (x,t) \colon \frac{1}{\tau} \int_0^\tau \lambda(f_sx)ds \geq \eta \text{ and }\frac{1}{\tau}\int^\tau_{0}\lambda(f_{t-s}x)ds \geq \eta \text{ for all } s\in [0,t]\Big\}.
\end{equation}
Definition 3.4 from Call and Thompson \cite{call2019equilibrium} ensures that 
$(\mathcal{P},\mathcal{G}, \mathcal{S}):=(B(\eta),G(\eta),B(\eta))$ defines a decomposition of $X\times [0,\infty)$ with $p(x,t)$ being the largest value such that the initial segment upto time $p(x,t)$ is in $B(\eta)$, $s(x,t)$ being the largest value such that the terminal segment upto time $s(x,t)$ is in $B(\eta)$, and $g(x,t):=t-p(x,t)-s(x,t)$. Such method of decomposing the orbit segments using a (lower semi-)continuous function $\lambda$ is called the \textit{$\lambda$-decomposition} in \cite{call2019equilibrium}.
\end{rem}

Due to some technical reasons (see \cite{climenhaga2016unique}),
we need to work with collections slightly bigger than ${\cal P}$ and
${\cal S}$, namely,  
\begin{align*}
[{\cal P}] & :=\{(x,n)\in X\times\N:\ (f_{-s}x,n+s+t)\in{\cal P}\ {\rm for}\ {\rm some}\ s,t\in[0,1]\},
\end{align*}
and similarly for $[{\cal S}].$ The following definition is the last remaining piece needed to state the Climenhaga-Thompson criteria.

\begin{defn}
For any $x\in X$ and $\ep>0$,  
\begin{enumerate}[font=\normalfont] 

\item The \textit{bi-infinite Bowen ball} $\G_{\ep}(x)$ is defined
as 
\[
\G_{\ep}(x):=\{y\in X:\ d(f_{t}x,f_{t}y)\leq\ep\ {\rm for}\ {\rm all}\ t\in\R\}.
\]

\item The set of \textit{non-expansive points at scale} $\ep$ is
defined as
\[
{\rm NE(\ep}):=\{x\in X:\ \G_{\ep}\nsubseteq f_{[-s,s]}(x)\ {\rm for\ }{\rm any}\ s>0\}
\]
where $f_{[a,b]}(x)=\{f_{t}x:\ t\in[a,b]\}.$

\item \textit{The pressure of obstruction to expansivity} for $\vp$
is defined as 
\[
P_{{\rm exp}}^{\perp}(\vp):=\lim_{\ep\to0}P_{\exp}^{\perp}(\vp,\ep)
\]
 where 
\[
P_{\exp}^{\perp}(\vp,\ep):=\sup\{h_{\mu}(f_{1})+\int\vp d\mu:\ \mu\in{\cal M}^{e}({\cal F})\ {\rm and\ }\mu({\rm NE}(\ep))=1\}
\]
 and ${\cal M}^{e}({\cal F})$ is the set of ${\cal F}-$invariant
ergodic probability measures on $X$. 

\end{enumerate}
\end{defn}

\begin{rem}
For uniform hyperbolic systems, ${\rm NE}(\ep)=\emptyset$ for all $\ep$
sufficiently small; thus $P_{\exp}^{\perp}(\vp)=-\infty$. Hence, the pressure of obstruction to expansivity $P_{{\rm exp}}^{\perp}(\vp)$ is always necessarily smaller than the pressure $P(\vp)$ in the setting of Bowen's work \cite{bowen1974some}.
\end{rem}

Finally, the following theorem describes the Climenhaga-Thompson criteria for the uniqueness of the equilibrium states. We will use this theorem to prove Theorem \ref{thm: 1} in Section \ref{sec: proof of thm A}.

\begin{thm}
\cite[Theorem A]{climenhaga2016unique}\label{thm:C-T_criteria} Let
$(X,{\mathcal F})$ be a flow on a compact metric space, and $\vphi:X\to\R$
be a continuous potential. Suppose that $P_{{\rm exp}}^{\perp}(\vphi)<P(\vphi)$
and $X\times[0,\infty)$ admits a decomposition $({\mathcal P},{\mathcal G},{\mathcal S})$
with the following properties: 
\begin{enumerate}[label=(\roman*)]
\item ${\mathcal G}$ has specification;
\item $\vphi$ has Bowen property on ${\mathcal G}$;
\item\label{eq: CT 3} $P([{\mathcal P}]\cup[{\mathcal S}],\vphi)<P(\vphi)$.
\end{enumerate}
Then $(X,{\mathcal F},\vphi)$ has a unique equilibrium state $\mu_{\vphi}$.
\end{thm}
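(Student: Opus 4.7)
The overall plan is to adapt Bowen's classical uniqueness argument to a non-uniform setting by isolating the ``good core'' $\mathcal{G}$, where specification and bounded distortion hold, and controlling both the prefix/suffix complement $[\mathcal{P}]\cup[\mathcal{S}]$ and the non-expansive set $\mathrm{NE}(\varepsilon)$ via the two pressure-gap hypotheses $P([\mathcal{P}]\cup[\mathcal{S}],\vphi)<P(\vphi)$ and $P_{\exp}^{\perp}(\vphi)<P(\vphi)$. The argument splits into three phases: first, reduce all counting to $\mathcal{G}$; second, build a non-uniform Gibbs-type reference measure $\mu$ concentrated on orbits that spend most of their time in $\mathcal{G}$; third, deduce uniqueness by showing every ergodic equilibrium state coincides with $\mu$.

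For the reduction, fix a small scale $\delta>0$ and, using hypothesis (iii), choose $\eta>0$ with $\Lambda([\mathcal{P}]\cup[\mathcal{S}],\vphi,\delta,t)\leq C e^{(P(\vphi)-\eta)t}$. Since every $(x,t)\in X\times[0,\infty)$ admits a decomposition with $t=p(x,t)+g(x,t)+s(x,t)$, a union bound over the choice of $p$ and $s$ forces $\tfrac{1}{t}\log\Lambda(\mathcal{G},\vphi,\delta,t)\to P(\vphi)$, so that $\mathcal{G}$ alone carries the full exponential growth rate. Specification on $\mathcal{G}$ then yields an almost-multiplicativity estimate
\[
\Lambda(\mathcal{G},\vphi,\delta,t_1+t_2+\tau)\;\gtrsim\;e^{-\|\vphi\|_\infty\tau}\,\Lambda(\mathcal{G},\vphi,\delta,t_1)\,\Lambda(\mathcal{G},\vphi,\delta,t_2),
\]
obtained by picking $(t_1,\delta)$- and $(t_2,\delta)$-separated sets in $\mathcal{G}_{t_1},\mathcal{G}_{t_2}$ and gluing them via the specification gap $\tau$ to manufacture separated points in $X$ tracing both segments.

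Next, extract a reference measure $\mu$ as a weak-$*$ subsequential limit of the weighted empirical averages
\[
\mu_k\;:=\;\frac{1}{\Lambda(\mathcal{G},\vphi,\delta,t_k)}\sum_{x\in E_{t_k}}e^{\Phi(x,t_k)}\cdot\frac{1}{t_k}\int_0^{t_k}\delta_{f_\tau x}\,d\tau,
\]
over maximal $(t_k,\delta)$-separated subsets $E_{t_k}\subset\mathcal{G}_{t_k}$. The Bowen property on $\mathcal{G}$ guarantees that $\Phi$ varies by at most a constant across $B_t(x,\delta)\cap\mathcal{G}_t$, and combining this with the cutting and pasting afforded by specification yields a two-sided Gibbs estimate
\[
K^{-1}\,e^{-P(\vphi)t+\Phi(x,t)}\;\leq\;\mu\bigl(B_t(x,\delta)\bigr)\;\leq\;K\,e^{-P(\vphi)t+\Phi(x,t)}\qquad\text{for }(x,t)\in\mathcal{G}.
\]

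Finally, for uniqueness let $\nu$ be an ergodic equilibrium state. The hypothesis $P_{\exp}^{\perp}(\vphi)<P(\vphi)$ forces $\nu(\mathrm{NE}(\varepsilon))=0$ for small $\varepsilon$, so Brin--Katok/Katok-entropy arguments based on Bowen balls are at our disposal. Since $P([\mathcal{P}]\cup[\mathcal{S}],\vphi)<P(\vphi)$, a Birkhoff argument rules out $\nu$ charging orbits that sit in $[\mathcal{P}]\cup[\mathcal{S}]$ a positive fraction of the time, so $\nu$-a.e.\ point lies inside arbitrarily long $\mathcal{G}$-segments. Combining this with the Gibbs bound for $\mu$ yields a uniformly bounded Radon--Nikodym derivative $d\nu/d\mu$, and ergodicity of both measures promotes this to $\nu=\mu$. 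The main obstacle is the second phase, namely upgrading the counting estimate $\Lambda(\mathcal{G},\vphi,\delta,t)\sim e^{P(\vphi)t}$ to the two-sided Gibbs bound on $\mu(B_t(x,\delta))$: this requires delicate bookkeeping to balance the Bowen-property scale, the specification scale, and the gluing times $\tau$, and is precisely where the quantitative separation between $\mathcal{G}$ and $[\mathcal{P}]\cup[\mathcal{S}]$ enters. Once this Gibbs property is secured, the remainder of the argument is standard ergodic theory.
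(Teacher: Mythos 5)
The paper does not prove this statement at all: it is imported verbatim as Theorem A of Climenhaga--Thompson \cite{climenhaga2016unique}, so the only meaningful comparison is with the original argument there, which your sketch follows in outline (show $\mathcal{G}$ carries full pressure, build a reference measure with a non-uniform Gibbs property from weighted separated sets, then run a Bowen-style uniqueness argument). Phases one and two are in the right spirit, modulo the scale bookkeeping you acknowledge and the fact that the upper Gibbs bound must be proved for \emph{all} orbit segments at some larger scale, not only on $\mathcal{G}$.

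There are, however, two genuine gaps. First, your uniqueness step cannot work as written: if $\nu$ is an ergodic equilibrium state different from the ergodic measure $\mu$, then $\nu$ and $\mu$ are mutually singular, so there is no ``uniformly bounded Radon--Nikodym derivative $d\nu/d\mu$'' to be extracted -- ruling out singularity is precisely the content of the proof, not a consequence of the Gibbs bound. The actual mechanism (Bowen, and Climenhaga--Thompson in the non-uniform setting) is a counting argument: almost expansivity of $\nu$ (from $P_{\exp}^{\perp}(\vphi)<P(\vphi)$) gives a Katok-type entropy formula at a fixed scale; the hypothesis $P([\mathcal{P}]\cup[\mathcal{S}],\vphi)<P(\vphi)$, again through a pressure/entropy comparison rather than a plain Birkhoff argument, shows $\nu$-typical orbit segments have long good cores; and the upper Gibbs bound for $\mu$ on arbitrary segments then shows that any invariant measure singular with respect to $\mu$ has $h_\nu+\int\vphi\,d\nu<P(\vphi)$. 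Only then does ergodicity give $\nu=\mu$ (and an ergodic decomposition handles non-ergodic equilibrium states). Second, you never verify that the constructed $\mu$ is itself an equilibrium state, which is the existence half of the theorem; this step also relies on the expansivity gap (adapted partitions, entropy computed at scale $\delta$) applied to $\mu$, not just to the competitor $\nu$. Without it, the Gibbs measure you built has no claim to be the object the theorem asserts is unique.
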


\section{Proof of Theorem \ref{thm: 1}}\label{sec: proof of thm A}
Let $\vphi$ be a \hol potential on $T^1M$ such that $P(\vphi)>P(\Sing,\vphi):=P(\Sing \times [0,\infty),\vphi)$.
We will show that $\vphi$ has a unique equilibrium state $\mu_\vphi$ by choosing a suitable decomposition of $T^1M \times [0,\infty)$ and verifying each condition of Theorem \ref{thm:C-T_criteria}. Throughout the section, the metric $d$ on $T^1M$ refers to the metric $d_K$.

Recall that $U^u(v)$ is the second fundamental form introduced in Proposition \ref{prop: horosphere}. Let $\Lambda$ be the maximum eigenvalue of $U^u(v)$ over all $v \in T^1M$.
Then the definition of $d_K$, $d_t$, and Remark \ref{rem: property s, cs} imply that various metrics are related as follows:
\begin{align}
\begin{split}d_{t}(v,w) & \leq d^{cs}(v,w),\\
d_{t}(v,w) & \leq d^{u}(f_{t+1}v,f_{t+1}w)\leq e^{\Lambda}d^{u}(f_{t}v,f_{t}w).
\end{split}
\label{eq: metric comparison}
\end{align}
Such relationships between various metrics will be used in establishing the specification property later in this section.

\subsection{Decomposition $(\mathcal{B}_T(\eta),\mathcal{G}_T(\eta),\mathcal{B}_T(\eta))$}\label{subsec: decomp}

For any $T,\eta>0$, we define $\mathcal{B}_T(\eta)$ and $\mathcal{G}_T(\eta)$ by \eqref{eq: B(eta)} and \eqref{eq: G(eta)} using the function $\lt \colon T^1M \to [0,\infty)$ defined as in \eqref{eq: lt}:
$$B_T(\eta):= \Big\{ (x,t) \colon \frac{1}{t} \int_0^t \lt(f_sx)ds \leq \eta\Big\}$$
and
$$G_T(\eta):= \Big\{ (x,t) \colon \frac{1}{\tau} \int_0^\tau \lt(f_sx)ds \geq \eta \text{ and }\frac{1}{\tau}\int^\tau_{0}\lt(f_{t-s}x)ds \geq \eta \text{ for all } s\in [0,t]\Big\}.
$$

Since $\lt$ is a continuous function for any $T,\eta>0$, Remark \ref{rem: lambda decomp} ensures that 
\begin{equation}\label{eq: decomp by lt}
(\mathcal{P},\mathcal{G},\mathcal{S})=(\mathcal{B}_T(\eta),\mathcal{G}_T(\eta),\mathcal{B}_T(\eta))
\end{equation}
defines a decomposition of $T^1M\times [0,\infty)$.
The following lemma shows that the geodesic flow $\mathcal{F}=\{f_t\}_{t\in\R}$ displays hyperbolic behaviors along the orbit segments in $\mathcal{G}_T(\eta)$.

\begin{lem}\label{lem45}
For any $T,\eta>0$, there exists $C=C(T,\eta)$ and $\delta  =\delta(T,\eta)>0$ such that the followings hold: let $(v,t)$ be any orbit segment in $ \mathcal{G}_T(\eta)$, then  
\begin{enumerate}[label=(\alph*)]
\item\label{eq: a}
For any $f_tw,f_tw'\in W^{u}_{\delta}(f_tv)$, and any $0\leq\tau\leq t$, 
$$d^{u}(f_{\tau}w, f_{\tau}w')\leq Cd^{u}(f_tw,f_tw')e^{-\frac{(t-\tau)\eta}{4T}}.$$
\item\label{eq: b} Moreover, denoting by $(\wt{v},\wt{t}):=(f_{-T}v,t+2T)$ the orbit segment obtained by extending $(v,t)$ by time $T$ at both ends, for any $f_{\wt{t}}w,f_{\wt{t}}w' \in W^u_\delta(f_{\wt{t}}\wt{v})$, 
$$d^u(w,w') \leq d^u(f_{\wt{t}}w,f_{\wt{t}}w')e^{-\frac{t\eta}{4T}}.$$ 
\end{enumerate}
Analogous inequalities hold for $d^{s}$ and $d^{cs}$.
\end{lem}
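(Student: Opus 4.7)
The strategy is to first establish the contraction at the level of unstable Jacobi fields along $\gamma_v$, and then translate to the intrinsic metric $d^u$ using the fact that the $d^u$-length of a curve $c\subset W^u$ equals the Riemannian length of the projected curve $\pi\circ c\subset M$. The key pointwise ingredient is the unstable analogue of Lemma~\ref{Jgrowthlem}: for any orthogonal unstable Jacobi field $J$ along $\gamma_v$, setting $Y(u):=J(u)/\|J(u)\|$ and $F(u):=\langle U^u(f_uv)Y(u),Y(u)\rangle\geq 0$, one has
\[
\lt(f_sv)\ \leq\ \lambda^u_T(f_sv)\ \leq\ \int_{s-T}^{s+T}F(u)\,du .
\]
A direct Fubini computation then upgrades this to $\int_{a+T}^{b-T}\lt(f_sv)\,ds\leq 2T\int_a^b F(u)\,du$ for any $a<b$.

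\textbf{Jacobi-field contraction.} For part~(a) I take $a=\tau$ and $b=t$; the terminal-average part of the $\mathcal{G}_T(\eta)$ condition (which, via the substitution $s\mapsto t-s$, reads $\int_\tau^t\lt(f_sv)\,ds\geq\eta(t-\tau)$), combined with boundedness of $\lt$ on $\ut$ to absorb the $O(T^2\|\lt\|_\infty)$ boundary loss, yields
\[
\int_\tau^t F(u)\,du\ \geq\ \tfrac{\eta(t-\tau)}{2T}-C_1
\]
for some $C_1=C_1(T,\eta)$. Exponentiating gives $\|J(\tau)\|\leq e^{C_1}e^{-(t-\tau)\eta/(2T)}\|J(t)\|$. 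For part~(b) I instead take $a=-T$ and $b=t+T$; this precisely cancels the boundary terms, and the initial-average condition $\int_0^t\lt(f_sv)\,ds\geq\eta t$ delivers $\|J(-T)\|\leq e^{-\eta t/(2T)}\|J(t+T)\|$ with no multiplicative constant. In both cases the rate $\eta/(2T)$ is a factor of 2 stronger than what the statement demands, leaving room for the transfer step below.

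\textbf{Translation and main obstacle.} Given $w,w'\in W^u(v)$ with $f_tw,f_tw'\in W^u_\delta(f_tv)$, pick a near-minimizing curve $c\colon[0,1]\to W^u(f_tv)$ joining $f_tw$ and $f_tw'$. Because $d^u$-length is measured through $d\pi$, the flowed curve $f_{\tau-t}c\subset W^u(f_\tau v)$ has $d^u$-length exactly $\int_0^1\|J_r(\tau-t)\|\,dr$, where $J_r$ is the unstable Jacobi field along $\gamma_{c(r)}$ with $J_r(0)=d\pi\,\dot c(r)$, and $\int_0^1\|J_r(0)\|\,dr$ equals (up to $\epsilon$) $d^u(f_tw,f_tw')$. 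The principal difficulty is that the $\mathcal{G}_T(\eta)$ hypothesis is given only for $\gamma_v$, not for the nearby orbits $\gamma_{c(r)}$; this is handled by Remark~\ref{rem: property s, cs}, which implies $d^u(f_sc(r),f_sf_tv)\leq\delta$ for all $s\leq 0$. Shrinking $\delta$ and invoking uniform continuity of $\lt$ on the compact $\ut$ ensures $|\lt(f_sc(r))-\lt(f_{s+t}v)|\leq\eta/2$ throughout the relevant time interval; applying the preceding Jacobi-field estimate along each $\gamma_{c(r)}$ with effective rate $\eta/2$ (hence contraction exponent $\eta/(4T)$) and integrating in $r$ establishes both (a) and (b). The corresponding $d^s$ estimates follow symmetrically by time reversal using $\lambda^s_T$ and the monotonicity of stable Jacobi fields, and the $d^{cs}$ case is deduced from the $d^s$ estimate together with the isometric behavior of the flow in the central direction (Remark~\ref{rem: property s, cs}).
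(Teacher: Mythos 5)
Your proposal is correct and follows essentially the same route as the paper: the unstable analogue of Lemma \ref{Jgrowthlem}, the Fubini/double-integral estimate (the paper's Lemma \ref{lem: double int}), and the transfer of the $\mathcal{G}_T(\eta)$ hypothesis from $\gamma_v$ to the nearby orbits $\gamma_{c(r)}$ via monotonicity of $d^u$ under backward flow plus uniform continuity of $\lt$ at cost $\eta/2$, with the boundary loss absorbed into $C$ for (a) and cancelled by the time-$T$ extension for (b). The only differences are presentational (you state the estimate along $\gamma_v$ first and then apply it along $\gamma_{c(r)}$, and your $O(T^2\|\lt\|_\infty)$ bookkeeping for the boundary term is cruder than the paper's $2T\Lambda$, but both are harmless constants).
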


The proof of Lemma \ref{lem45} requires a general lemma on double integrals.

\begin{lem}\cite[Lemma 3.12]{chen2018unique} \label{lem: double int}
For any non-negative integrable function $\psi \colon \R \to \R$, let $$\psi_T(x):= \int\limits_{-T}^T \psi(x+\tau)~d\tau.$$
Then for every $a\leq b$, 
$$\int_a^b \psi_T(t)dt \leq 2T\int_{a-T}^{b+T} \psi(t)dt.$$
\end{lem}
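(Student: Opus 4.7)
The plan is to carry out a direct Fubini-plus-monotonicity argument. First, I would expand the definition of $\psi_T$ inside the integral on the left-hand side, obtaining the double integral
\[
\int_a^b \psi_T(t)\,dt \;=\; \int_a^b \!\int_{-T}^{T} \psi(t+\tau)\,d\tau\,dt.
\]
Since $\psi \geq 0$, Tonelli's theorem applies and allows me to interchange the order of integration without any measurability concerns, so the right-hand side equals $\int_{-T}^{T}\!\int_a^b \psi(t+\tau)\,dt\,d\tau$.

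Next I would perform the change of variables $s = t+\tau$ in the inner integral for each fixed $\tau \in [-T, T]$, which yields
\[
\int_{-T}^{T} \int_{a+\tau}^{b+\tau} \psi(s)\,ds\,d\tau.
\]
The key observation is now purely set-theoretic: for every $\tau \in [-T, T]$, the interval $[a+\tau, b+\tau]$ is contained in $[a-T, b+T]$. Combined with the non-negativity of $\psi$, this gives the pointwise (in $\tau$) bound
\[
\int_{a+\tau}^{b+\tau} \psi(s)\,ds \;\leq\; \int_{a-T}^{b+T} \psi(s)\,ds.
\]

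Finally, I would integrate this bound over $\tau \in [-T,T]$. Since the right-hand side no longer depends on $\tau$, the outer integration simply contributes the length of $[-T,T]$, namely $2T$, giving the desired estimate
\[
\int_a^b \psi_T(t)\,dt \;\leq\; 2T \int_{a-T}^{b+T} \psi(s)\,ds.
\]
There is no genuine obstacle here; the entire argument is a clean Fubini swap followed by an interval-containment estimate, and the result is sharp (equality holds, for instance, when $\psi$ is supported well inside $[a+T, b-T]$).
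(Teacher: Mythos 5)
Your argument is correct and is essentially the same Fubini--Tonelli computation used in the cited reference \cite[Lemma 3.12]{chen2018unique} (the present paper only quotes the lemma without reproving it): expand $\psi_T$, swap the order of integration using non-negativity, substitute $s=t+\tau$, and bound the inner integral by containment of $[a+\tau,b+\tau]$ in $[a-T,b+T]$. Your sharpness remark is also accurate, so there is nothing to add.
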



\begin{proof}[Proof of Lemma \ref{lem45}]The proof for \ref{eq: a} and \ref{eq: b} are similar.
Using uniform continuity of $\lt$, choose $\delta>0$ small such that 
$$d_K(v,w)<\delta \implies |\lt(v)- \lt(w)| \leq \eta/2.$$

For \ref{eq: a},
let $c:[0,1]\to W^u_{\delta}(f_tv)$ be a curve connecting $f_tw$ with $f_tw'$ whose length realizes the distance $d^u(f_tw,f_tw')$. Each $r \in [0,1]$ determines a geodesic $\gamma_r$ perpendicular to the unstable horosphere $\pi (v)$. Such one-parameter family of geodesics generate a family of unstable Jacobi fields $J_r$. Setting $Y_r(\tau):=J_r(\tau)/\|J_r(\tau)\|$, Lemma \ref{Jgrowthlem} and \ref{lem: double int} and Corollary \ref{deflem} give
\begin{align*}
\log \frac{\|J_r(t)\|}{\|J_r(\tau)\|} &= \int_{\tau}^t \langle U^u(f_{s-t}c(r))Y_r(s),Y_r(s)\rangle ds\\
&\geq \frac{1}{2T}\int_{\tau+T}^{t-T} \int_{-T}^T \langle U^u(f_{s+\tau-t}c(r))Y_r(s+\tau),Y_r(s+\tau)\rangle d\tau ds\\
&\geq \frac{1}{2T} \int_{\tau+T}^{t-T} \lt^u(f_{s-t}c(r))ds\\
&\geq \frac{1}{2T}\int_{\tau}^t \lt^u(f_{s-t}c(r))ds - 2T\Lambda\\
&\geq \frac{(t-\tau)\eta}{4T} -2T\Lambda.
\end{align*}
Here we used the fact that $U^u$ is positive semi-definite in order to apply Lemma \ref{lem: double int}. Setting $C = e^{2T\Lambda}$ gives \ref{eq: a}.

For \ref{eq: b}, we similarly define $c \colon [0,1] \to W^u_\delta(f_{\wt{t}}v)$ connecting $f_{\wt{t}}w$ to $f_{\wt{t}}w'$ and obtain a family of unstable Jacobi fields $J_r$. Using the same notations as part \ref{eq: a}, we have
$$\log \frac{\|J_r(\wt{t})\|}{\|J_r(0)\|} \geq \frac{1}{2T}\int_T^{\wt{t}-T}\lt^u(f_{s-\wt{t}}c(r))ds \geq \frac{t\eta}{4T}.$$
This completes the proof.
\end{proof}

Using Lemma \ref{lem45} and \hol continuity of $\vphi$, the Bowen property of $\vphi$ on $\mathcal{G}_T(\eta)$ easily follows:

\begin{prop}\cite[Theorem 6.5]{chen2018unique}
$\vphi$ has Bowen property on $\mathcal{G}_T(\eta)$ for any $T,\eta>0$.
\end{prop}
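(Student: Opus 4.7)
The plan is to adapt the standard bracketing argument for the Bowen property, relying on three ingredients: local product structure (Lemma \ref{lem: local product structure}), the exponential contraction estimates of Lemma \ref{lem45}, and \hol continuity of $\vphi$. A preliminary observation is that continuity of $\lt$ combined with the defining inequality of $\mathcal{G}_T(\eta)$ forces both endpoints of every segment $(v,t) \in \mathcal{G}_T(\eta)$ into $\Reg_T(\eta)$: letting $\tau\to 0^+$ in $\frac{1}{\tau}\int_0^\tau \lt(f_sv)\,ds \geq \eta$ gives $\lt(v) \geq \eta$, and symmetrically $\lt(f_tv) \geq \eta$. Lemma \ref{lem: local product structure} then furnishes a single pair $(\kappa,\delta)$ for which $W^u$ and $W^{cs}$ admit $(\kappa,\delta)$-local product structure at both endpoints of every segment in $\mathcal{G}_T(\eta)$.

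Fix a uniform scale $\ep > 0$ with $\kappa\ep < \delta$ and let $w \in B_t(v,\ep)$. Performing the bracketing at the right endpoint, local product structure at $f_tv$ produces a unique point $z' \in W^u_{\kappa\ep}(f_tv) \cap W^{cs}_{\kappa\ep}(f_tw)$, and I set $z := f_{-t}z'$. By flow-invariance of the foliations, $z \in W^u(v) \cap W^{cs}(w)$, with the key estimate $d^u(f_tv, f_tz) \leq \kappa\ep$. I then split
\[
|\Phi(v,t) - \Phi(w,t)| \leq |\Phi(v,t) - \Phi(z,t)| + |\Phi(z,t) - \Phi(w,t)|.
\]

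For the first term, Lemma \ref{lem45}\ref{eq: a} applied to $v$ and $z$ yields $d^u(f_\tau v, f_\tau z) \leq C \kappa\ep\, e^{-(t-\tau)\eta/4T}$ for $\tau \in [0,t]$, and the \hol bound $|\vphi(f_\tau v) - \vphi(f_\tau z)| \leq C_{\vphi} d_K(f_\tau v, f_\tau z)^{\alpha}$ (using the uniform equivalence of $d_K$ and $d^u$ at small scales) integrates to a $t$-independent constant via the resulting geometric sum. For the second term, $z$ and $w$ lie on the same central stable leaf with $d^{cs}(z,w) \leq \kappa\ep$; decomposing this into a flow-direction piece of length $\leq \kappa\ep$ (contributing at most $2\kappa\ep \|\vphi\|_\infty$) and a pure stable piece, the $d^s$ analog of Lemma \ref{lem45}\ref{eq: a} provides forward-in-time exponential contraction $d^s(f_\tau \cdot, f_\tau \cdot) \leq C\kappa\ep\, e^{-\tau\eta/4T}$, and the \hol estimate integrates once more to a uniform constant. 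Summing the two contributions yields the Bowen constant $K$ independent of $(v,t) \in \mathcal{G}_T(\eta)$.

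The main subtlety is ensuring that the bracketing places $z$ within the scale where Lemma \ref{lem45} applies at \emph{both} endpoints of the orbit segment simultaneously. This is exactly what bracketing at the right endpoint $f_tv$ accomplishes: it directly controls $d^u(f_tv, f_tz)$, making the unstable contraction of Lemma \ref{lem45}\ref{eq: a} immediately applicable, while the resulting $z$ is automatically within scale $\kappa\ep$ of $v$ at the left endpoint as required for the $cs$-term. Once these choices are made compatibly, the argument reduces to that of \cite[Theorem 6.5]{chen2018unique} with only notational adjustments from the surface case to the higher-dimensional setting.
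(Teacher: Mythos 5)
Your overall strategy (endpoints of segments in $\mathcal{G}_T(\eta)$ lie in $\Reg_T(\eta)$, local product structure, the contraction estimates of Lemma \ref{lem45}, and \hol continuity) is the paper's intended route, and the unstable half of your estimate is correct: bracketing at the right endpoint gives $d^u(f_tv,f_tz)\leq\kappa\ep$, so part \ref{eq: a} of Lemma \ref{lem45} applies directly and the \hol sum converges. The gap is in the center-stable half. The local product structure at $f_tv$ gives $d^{cs}(f_tw,z')\leq\kappa\, d_K(f_tv,f_tw)\leq\kappa\ep$, i.e.\ smallness of the center-stable distance \emph{at time $t$}, not at time $0$. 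By Remark \ref{rem: property s, cs} the map $\tau\mapsto d^{cs}(f_\tau z,f_\tau w)$ is non-increasing in forward time, so the time-$0$ quantity $d^{cs}(z,w)$ is bounded \emph{below}, not above, by $d^{cs}(f_tz,f_tw)$; it could a priori be large. Hence your assertion ``$d^{cs}(z,w)\leq\kappa\ep$'' — and with it the applicability of the $d^s/d^{cs}$ analogue of Lemma \ref{lem45}\ref{eq: a}, which requires smallness of the leaf distance at the left endpoint — is unjustified. Your closing remark that $z$ is ``automatically within scale $\kappa\ep$ of $v$ at the left endpoint'' has the logic backwards: what is automatic (from backward monotonicity of $d^u$) is $d^u(v,z)\leq\kappa\ep$, i.e.\ closeness in the ambient metric, which does not by itself bound the intrinsic distance $d^{cs}(z,w)$ along the leaf.

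This missing step is precisely the content of the combination argument in \cite[Theorem 6.5]{chen2018unique} (and \cite{burns2018unique}): one must use that $w\in B_t(v,\ep)$ keeps all the relevant orbits within a fixed small distance of the good orbit on all of $[0,t]$, and then invoke the local product structure a second time, at the left endpoint $v\in\Reg_T(\eta)$, together with uniqueness of the local intersection of $W^u(z)$ with $W^{cs}(w)$, to conclude $d^{cs}(z,w)\leq\kappa\, d_K(z,w)\leq\kappa(1+\kappa)\ep$. Only after that can the forward contraction along $W^{cs}$ be applied — and even then one should note that $z,w$ lie on $W^{cs}(w)$ rather than $W^{cs}(v)$, so the proof of the contraction estimate must be run with the connecting curve kept $\delta$-close to the orbit of $v$, which again uses $w\in B_t(v,\ep)$. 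As written, the proposal does not close this loop, so it does not yet constitute a proof.
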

\begin{proof}
Using part \ref{eq: a} of Lemma \ref{lem45}, it follows that $\vphi$ has the Bowen property along the stable and unstable leaves of $\mathcal{G}_T(\eta)$, and together imply that $\vphi$ has the Bowen property on $\mathcal{G}_T(\eta)$.
See \cite{chen2018unique} for details.
\end{proof}

\subsection{Specification}
For the remainder of this section, we describe how the decomposition
$(\mathcal{B}_T(\eta),\mathcal{G}_T(\eta),\mathcal{B}_T(\eta))$ given in \eqref{eq: decomp by lt} satisfies the conditions in Theorem \ref{thm:C-T_criteria}. Since much of the following subsections follow the corresponding sections in \cite{burns2018unique} and \cite{chen2018unique} closely, we will only sketch the main ideas and refer the readers there for details.

Let $\CC_T(\eta)$ be the set of orbit segments whose endpoints belong to $\Reg_T(\eta)$:
$$\CC_T(\eta):= \{(v,t) \in T^1M \times [0,\infty) \colon v,f_tv \in \Reg_T(\eta)\}.$$
Since $\mathcal{G}_T(\eta)$ is a subset of $\CC_T(\eta)$, the following proposition shows that $(\mathcal{B}_T(\eta),\mathcal{G}_T(\eta),\mathcal{B}_T(\eta))$ meets the first condition of Theorem \ref{thm:C-T_criteria} for all $T,\eta>0$. 
\begin{prop}\cite[Theorem 4.1]{burns2018unique}\label{prop: specification}
For any $T,\eta>0,$ $\CC_T(\eta)$ has specification defined as in Definition \ref{defn: specification}.
\end{prop}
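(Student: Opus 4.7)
The plan is to adapt the proof of \cite[Theorem 4.1]{burns2018unique} from the nonpositively curved setting to manifolds without focal points. The three essential ingredients are all available here: the local product structure of $W^u$ and $W^{cs}$ on the compact set $\Reg_T(\eta)$ (Lemma \ref{lem: local product structure}), topological transitivity of the geodesic flow $\F$ (Proposition \ref{prop:no_focal_points}(1)), and the monotonicity of the intrinsic metrics $d^u, d^{cs}$ along the flow (Remark \ref{rem: property s, cs}) combined with the comparisons in \eqref{eq: metric comparison}.

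Fix a target scale $\rho>0$. First, I would invoke Lemma \ref{lem: local product structure} to obtain constants $\delta_0$ and $\kappa$, shrinking $\delta_0$ so that $2e^\Lambda \kappa\delta_0 \leq \rho$; this choice, together with \eqref{eq: metric comparison}, guarantees that any point whose $d^u$ or $d^{cs}$ distance from a vector in $\Reg_T(\eta)$ is bounded by a constant multiple of $\kappa\delta_0$ on the relevant time window lies in the corresponding Bowen ball at scale $\rho$. Next, using compactness of $\Reg_T(\eta)$ together with topological transitivity, I would extract a uniform transition time $\T=\T(\rho)$ such that for every $u,v\in\Reg_T(\eta)$ there exists $q\in T^1M$ with $d_K(q,u)<\delta_0$ and $d_K(f_{\T}q,v)<\delta_0$.

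The heart of the argument is the two-segment case. Given $(v_1,t_1),(v_2,t_2)\in\CC_T(\eta)$ with endpoints $u_i := f_{t_i}v_i\in\Reg_T(\eta)$, I would apply the transitivity step with $(u,v)=(u_1,v_2)$ to obtain a connector $q$, and then combine two successive bracket operations at $u_1$ and at $v_2$ (both in $\Reg_T(\eta)$, so Lemma \ref{lem: local product structure} applies) to produce a single point $w$. The bracket at $u_1$, i.e., setting $f_{t_1}(w) = [u_1, q]\in W^u_{\kappa\delta_0}(u_1)\cap W^{cs}_{\kappa\delta_0}(q)$, places $f_{t_1}(w)$ in $W^u(u_1)$ with $d^u$-distance at most $\kappa\delta_0$; since $d^u$ is non-decreasing along the forward flow, the bound propagates backward to give $d^u(f_\tau w, f_\tau v_1)\leq \kappa\delta_0$ for every $\tau\in[0,t_1]$, so \eqref{eq: metric comparison} yields $d_{t_1}(w,v_1)\leq e^\Lambda\kappa\delta_0\leq\rho$. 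A second bracket of the form $[\,\cdot\,,v_2]$ at $v_2$, placing $f_{t_1+\T}(w)$ in $W^{cs}_{\kappa\delta_0}(v_2)$ (up to a controlled distortion by $\kappa\delta_0$ inherited from the first bracket), together with the non-increase of $d^{cs}$ along the forward flow, handles shadowing of $(v_2,t_2)$ at scale $\rho$. The $n$-segment case then follows by induction, with the junction-by-junction monotonicity of $d^u$ and $d^{cs}$ ensuring that the bracket adjustment at each junction does not destroy the shadowing of previously handled segments.

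The main obstacle, compared to the setting of \cite{burns2018unique}, is the absence of convexity for $\|J(t)\|$ which is invoked in the nonpositively curved case (see Remark \ref{rem: lt vs lambda 1}). In the present setting this is replaced by the weaker monotonicity of $d^u$ and $d^{cs}$ under the flow, which is exactly what the bracket construction requires: shadowing at scale $\rho$ in the Bowen metric $d_t$ only needs the relevant intrinsic distances to remain bounded (not contracting) throughout each orbit segment, and monotonicity delivers this. The uniformity of the constants $\kappa, \delta_0, \T$ in terms of $T, \eta, \rho$ follows from the compactness of $\Reg_T(\eta)$ and the continuity of the distributions $E^u$ and $E^{cs}$.
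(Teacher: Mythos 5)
Your outline reproduces the soft ingredients (local product structure on $\Reg_T(\eta)$, monotonicity of $d^u$ and $d^{cs}$, the comparison \eqref{eq: metric comparison}), but it is missing the one idea that makes the induction close: a \emph{contraction} mechanism at the transitions. With only monotonicity, the bracket adjustment made at the $j$-th junction perturbs the orbit by up to $\kappa\delta_0$ in $d^u$, and flowing backward this perturbation does not shrink --- it is merely non-increasing. So when you process junctions $j, j+1,\dots$ the perturbations affecting the $i$-th segment add up, and after $n$ segments the accumulated $d^u$-error at segment $i$ is of order $(n-i)\kappa\delta_0$, which is unbounded in $n$. Since specification demands a single scale $\rho$ and a single gap time $\T(\rho)$ valid for \emph{every} finite collection of segments, your claim that ``bounded, not contracting'' suffices is exactly where the argument fails. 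The paper resolves this by fixing a regular closed geodesic and extending it to a reference segment $(v_0,t_0)\in\mathcal{G}_T(\eta)$, forcing every transition to shadow $(v_0,t_0)$; Lemma \ref{lem45}\ref{eq: b} then gives the estimate \eqref{eq: decrease by alpha}, i.e.\ each pass near the reference segment contracts the previously accumulated $d^u$-distance by a uniform factor $\alpha=\exp(-t_0'\eta/8T)<1$, so the total error becomes the geometric series $\sum_m 2\rho'\alpha^{j-m}\le \tfrac{2\rho'}{1-\alpha}$, uniform in $n$. This is also why the gap time in the paper is $\T=t_0+2\T_0$ rather than a bare transitivity time.

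A second, smaller gap: compactness plus topological transitivity does not yield a connector for \emph{every} transition time $t\ge\T$, only for some time per pair of $\delta_0$-balls, whereas Definition \ref{defn: specification} allows arbitrary gaps $T_{i+1}-T_i\ge t_i+\T$. The paper instead uses minimality of the foliations (Lemma \ref{minfol}, Corollary \ref{cor: uniform minimality}) together with the uniform expansion statement Lemma \ref{lem: 3} and the local product structure, packaged as Lemma \ref{lem: connect}, which produces a point of $f_t(W^u_\rho(v'))\cap W^{cs}_\rho(w')$ for \emph{all} $t\ge\T$ --- and, conveniently, places the connector directly on the unstable leaf of the first endpoint and the center-stable leaf of the next segment, which is what the $d^u$/$d^{cs}$ bookkeeping needs. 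If you replace your transitivity step by Lemma \ref{lem: connect} and route every transition through the hyperbolic reference segment to gain the factor $\alpha$, your sketch becomes the paper's proof.
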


\begin{rem}
Given a compact subset $X \subset \Reg$, Lemma \ref{charsing} and compactness of $X$ give some $T,\eta>0$ such that $X \subset \Reg_T(\eta)$. In particular, this implies that Proposition \ref{prop: specification} also holds on a set of orbit segments whose endpoints lie in some compact subset $X\subset \Reg$. This version of Proposition \ref{prop: specification} will appear in Subsection \ref{subsec: proof of thm C} when applied to $X=\Reg(\eta)$ for some $\eta>0$. 
\end{rem}

The proof of Proposition \ref{prop: specification} needs a few lemmas concerning the properties of the foliations $W^{s/u}$.

\begin{lem}\label{minfol}
The foliations $W^{s/u}$ are minimal.
\end{lem}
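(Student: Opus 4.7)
We prove minimality of $W^u$; the case of $W^s$ follows by reversing time. The argument adapts the classical strategy used in the rank 1 nonpositively curved setting by Eberlein \cite{Eberlein:1973hu} and Knieper \cite{knieper1998uniqueness}, as extended to the no-focal-point setting by Liu--Wang \cite{Liu:2018ud} and Liu--Wang--Wu \cite{liu2018patterson}.

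The plan is to descend to the ideal boundary of the universal cover $\widetilde{M}$. For each $v \in T^1\widetilde{M}$, one assigns to $\gamma_v$ a forward and a backward endpoint in $\partial \widetilde{M}$. The Flat Strip Theorem (Proposition \ref{prop:no_focal_points}(6)) ensures that bi-asymptotic geodesics bound a flat strip, allowing this boundary assignment to descend to a Hopf-type parametrization of $T^1\widetilde{M}$ modulo the flat-strip equivalence. Under this parametrization, the strong unstable leaf $W^u(v)$ corresponds to the slice of fixed backward endpoint. Hence minimality of $W^u$ on $T^1M = T^1\widetilde{M}/\Gamma$ reduces to showing that the $\Gamma := \pi_1(M)$-action on $\partial \widetilde{M}$ is minimal.

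Three geometric ingredients enter at this stage: (i) the rank 1 hypothesis provides hyperbolic isometries in $\Gamma$, obtained via rank 1 closed geodesics whose existence follows from topological transitivity (Proposition \ref{prop:no_focal_points}(1)); (ii) cocompactness of $\Gamma$ forces the limit set in $\partial\widetilde{M}$ to equal the whole ideal boundary; and (iii) the pairs of fixed points of hyperbolic elements of $\Gamma$ form a dense subset of $\partial \widetilde{M} \times \partial \widetilde{M} \setminus \mathrm{diag}$. The substantive ingredient is (iii); in the no-focal-point setting it is carried out in \cite{liu2018patterson} using Patterson--Sullivan theory, and yields the minimality of the $\Gamma$-action on $\partial\widetilde{M}$. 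Pulling back, every $W^u$-leaf is dense in $T^1M$.

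\textbf{Main obstacle.} The principal technical difficulty is setting up and exploiting the Hopf parametrization in the presence of a non-trivial singular set $\Sing$. In contrast with the nonpositively curved case, where convexity of Jacobi fields provides rigid control, here one must rely on the Flat Strip Theorem together with the local product structure at regular vectors (Lemma \ref{lem: local product structure}) to relate $W^u$-leaves through singular vectors to those through nearby regular vectors. Once the boundary framework is in place, the combination of minimality of the $\Gamma$-action on $\partial\widetilde{M}$ with topological transitivity of $\F$ yields density of every strong unstable (and strong stable) leaf, completing the proof.
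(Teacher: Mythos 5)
There is a genuine gap at the heart of your reduction. Under the Hopf-type parametrization, the set of vectors whose geodesics have a prescribed backward endpoint in $\partial\widetilde{M}$ is (up to flat-strip ambiguity) the center-unstable set $W^{cu}(v)$, \emph{not} the strong unstable leaf $W^{u}(v)$: the strong leaf is the codimension-one slice of $W^{cu}(v)$ cut out by additionally fixing the Busemann cocycle, i.e. the unstable horosphere. Consequently, minimality of the $\Gamma$-action on $\partial\widetilde{M}$ --- the statement you propose to extract from rank 1 axial isometries and Patterson--Sullivan theory --- would at best give density of the center-unstable (weak) leaves. Upgrading from weak to strong minimality is precisely the hard step, and ``minimality of the boundary action plus topological transitivity of $\F$'' does not formally imply it: for a constant-roof suspension of an Anosov diffeomorphism the weak foliations are minimal while every strong leaf stays trapped in a fiber, so some additional (here, geometric) mechanism is needed to exclude such behavior, and your outline never supplies it. The obstacle you single out (handling $\Sing$ via flat strips and the local product structure) is not the crux.

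For comparison, the paper's proof, following Ballmann's treatment of the nonpositively curved case \cite{ballmann1982axial}, takes a shorter route that works directly with the strong leaves: Hurley \cite{Hurley:1986km} already provides density in $T^{1}M$ of a single stable leaf $W^{s}(v)$, and one then runs the argument of \cite[Theorem 5.5]{Eberlein1973II} (which uses axial/rank 1 isometries on strong stable leaves and does not require the visibility hypothesis) to upgrade density of one leaf to density of every leaf. If you wish to salvage the boundary approach, you must add the step passing from a dense $W^{cu}$-leaf, or from density of fixed-point pairs of hyperbolic elements, to density of the strong leaves --- which is essentially the Eberlein--Ballmann argument you would end up reproducing anyway.
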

\begin{proof}
The analogous result for nonpositively curved manifolds was proved in \cite[Theorem 3.7]{ballmann1982axial}. The idea is to first establish the density of $W^s(v)$ for some $v\in T^1M$, then carry out the argument in the proof of \cite[Theorem 5.5]{Eberlein1973II} 
without the visibility condition. For geodesic flows over manifolds without focal points, we also know that at least one of the stable leaves $W^s(v)$ is dense in $T^1M$ from \cite{Hurley:1986km}. Then, the argument in \cite{Eberlein1973II} follows without much modification when $M$ is compact and has no focal points.
\end{proof}
Using the compactness of $T^1M$, we have the following corollary:
\begin{cor}\cite[Lemma 8.1]{Climenhaga:2015wf}\label{cor: uniform minimality}
For every $\ep>0$, there exists $R>0$ such that $W^{s/u}_R(v)$ is $\ep$-dense in $T^1M$ for any $v\in T^1M$.
\end{cor}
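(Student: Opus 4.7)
The plan is to deduce the uniform density statement from the leafwise density provided by Lemma \ref{minfol} together with a compactness argument built on the continuity of the stable/unstable foliations. I will argue for $W^s$; the argument for $W^u$ is identical.

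First, I would fix $\ep > 0$ and, for each $v \in T^1M$, use Lemma \ref{minfol} to pick $R_v > 0$ such that $W^s_{R_v}(v)$ is $\ep/3$-dense in $T^1M$. This is a pointwise statement; the entire content of the corollary is upgrading it to hold uniformly in $v$ with a single $R$.

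Next, I would invoke continuity of the stable foliation on a manifold without focal points: the map $w \mapsto W^s_{R_v}(w)$ is continuous in the Hausdorff distance (measured, say, in $d_K$) on a neighborhood of any base point. Hence there exists $\delta_v > 0$ such that whenever $d_K(w,v) < \delta_v$, the Hausdorff distance between $W^s_{R_v}(w)$ and $W^s_{R_v}(v)$ is at most $\ep/3$. A routine two-step triangle inequality then shows that $W^s_{R_v}(w)$ is $(2\ep/3)$-dense, and in particular $\ep$-dense, in $T^1M$ for every such $w$.

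Finally, I would use compactness of $T^1M$ to extract a finite subcover $B(v_1,\delta_{v_1}), \ldots, B(v_k, \delta_{v_k})$ and set $R := \max_{1 \leq i \leq k} R_{v_i}$. For an arbitrary $v \in T^1M$, pick $i$ with $v \in B(v_i, \delta_{v_i})$; then $W^s_{R_{v_i}}(v)$ is $\ep$-dense, and since $W^s_R(v) \supseteq W^s_{R_{v_i}}(v)$, the same holds for $W^s_R(v)$. The main (and only nontrivial) ingredient is the continuity of the local stable leaves in the Hausdorff metric, which is a standard consequence of the continuity of the bundle $E^s$ together with the fact that $W^s$ is integrable to a continuous foliation (Proposition \ref{prop:no_focal_points}(4)); the rest is a direct compactness packaging.
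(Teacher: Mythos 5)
Your argument is correct and is essentially the proof that the paper outsources to the citation \cite[Lemma 8.1]{Climenhaga:2015wf}: leafwise density from Lemma \ref{minfol}, continuity of the local leaves, and compactness of $T^1M$ to upgrade the pointwise radius $R_v$ to a uniform $R$. The only cosmetic point is that the Hausdorff-continuity of $w\mapsto W^{s}_{R}(w)$ should be credited to the continuity of the horospherical foliations themselves (Proposition \ref{prop:no_focal_points}(4), via Eschenburg/Pesin) rather than to continuity of the bundle $E^{s}$ alone, and any boundary effect at intrinsic radius exactly $R_{v_i}$ is harmlessly absorbed by enlarging the final $R$ slightly.
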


\begin{lem}\label{lem: 2}
Given $\delta>0$, there exist $T,\eta>0$ such that if $\lambda_T(v) \leq \eta$, then $d_K(v,\Sing)<\delta$.
\end{lem}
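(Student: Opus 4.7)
The plan is to argue by contradiction, combining the characterization of $\Sing$ in Lemma \ref{charsing} with compactness of $T^1M$ and continuity of $v \mapsto \lambda_T(v)$. Suppose the conclusion fails for some $\delta_0 > 0$: then for each $n \in \N$, the choice $T = n$, $\eta = 1/n$ does not work, so we may find $v_n \in T^1M$ with $\lambda_n(v_n) \leq 1/n$ and $d_K(v_n, \Sing) \geq \delta_0$. By compactness of $T^1M$, pass to a subsequence so that $v_n \to v_* \in T^1M$; since $\Sing$ is closed, $d_K(v_*, \Sing) \geq \delta_0$, and in particular $v_* \notin \Sing$.

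Next we show that $\lambda_T(v_*) = 0$ for every $T > 0$, which by Lemma \ref{charsing} forces $v_* \in \Sing$ and yields the desired contradiction. Fix $T > 0$. For all $n \geq T$, Corollary \ref{incrsl} gives $\lambda_T(v_n) \leq \lambda_n(v_n) \leq 1/n$. The key ingredient is continuity of $v \mapsto \lambda_T(v)$ at $v_*$: since $J^{u/s}_w$ is linear in $w$, the minima in the definition of $\lambda_T^{u/s}$ may be restricted to unit vectors $w \in v^\perp$, a compact set varying continuously with $v$. The stable and unstable Jacobi tensors $D^{s/u}(\cdot)$ along $\gamma_v$ depend continuously on $v$ (a standard fact for manifolds without focal points), so the map $(v,w) \mapsto \log(\|J^{u/s}_w(T)\| / \|J^{u/s}_w(-T)\|)$ is continuous on the compact sphere bundle $\{(v,w) : w \perp v,\ \|w\|=1\}$, and continuity of $\lambda_T^{u/s}$ (hence of $\lambda_T$) in $v$ follows from the standard argument for minima of continuous functions over continuously-varying compact fibers. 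Taking the limit then gives $\lambda_T(v_*) = \lim_{n} \lambda_T(v_n) \leq 0$, while Proposition \ref{prop:no_focal_points}(7) yields $\lambda_T \geq 0$, so $\lambda_T(v_*) = 0$ as needed.

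The main potential obstacle is rigorously justifying continuity of $D^{s/u}(\cdot)$ in $v$: $D^s$ is defined in the excerpt as $\lim_{s \to \infty} D_s$, so continuity of $D^s(T)$ as a function of $v$ requires locally uniform convergence of this limit. This is a standard property in the no-focal-points setting, available either via monotonicity of the $D_s$ or, equivalently, via continuity of the second fundamental form of horospheres guaranteed by Proposition \ref{prop: horosphere}. Once this is granted, the proof reduces to the compactness-and-contradiction argument above.
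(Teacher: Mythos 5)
Your argument is correct and is essentially the paper's proof in sequential-compactness form: the paper covers the compact set $K=\{v : d_K(v,\Sing)\geq\delta\}\subset\Reg$ by the nested open sets $A(\eta,T)=\{v : \lambda_T(v)>\eta\}$ using Lemma \ref{charsing} and Corollary \ref{incrsl}, which is the same combination of ingredients (characterization of $\Sing$, monotonicity in $T$, compactness, and continuity of $\lambda_T$, which the paper also takes as given) that you deploy via a convergent subsequence and contradiction.
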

\begin{proof}
From Lemma \ref{charsing}, $\Reg = \bigcup\limits_{\eta,T>0}A(\eta,T)$ where $A(\eta,T):=\{v \colon \lambda_T(v)>\eta\}$. Since $K := \{v \in T^1M \colon d_K(v,\Sing) \geq \delta\}$ is a compact set contained in $\Reg$, $\{A(\eta,T)\}_{\eta,T>0}$ is a nested open cover of $K$. So there exist $\eta,T>0$ such that $K \subset A(\eta,T)$
\end{proof}

\begin{rem}
Lemma \ref{lem: 2} serves the similar purpose as \cite[Proposition 3.4]{burns2018unique}, except that we need to use $\lt$ instead of $\lambda$ because the characterization of the singular set from Lemma \ref{charsing} is done using $\lt$; see also Remark \ref{rem: lt vs lambda 2}. 
\end{rem}

Next lemma follow as in \cite{burns2018unique} using the flat strip theorem \cite{OSullivan:1976cf} and Dini's theorem. 

\begin{lem} \cite[Proposition 3.13]{burns2018unique}\label{lem: 3}
For any $\ep,R>0$ and any compact $X \subset \Reg$, there exists $\T>0$ such that for every $w \in X$ and $t \geq \T$, we have $f_tW^u_\ep(w) \supset W^u_R(f_tw)$ and $f_{-t}W^s_{\ep}(w) \supset W^s_R(f_{-t}w)$.
 \end{lem}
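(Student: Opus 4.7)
The plan is to argue by contradiction using the flat strip theorem. Suppose the unstable conclusion fails for some $\ep, R>0$ and compact $X \subset \Reg$. Then there exist sequences $w_n \in X$, $t_n \to \infty$, and $u_n \in W^u_R(f_{t_n}w_n)$ with $u_n \notin f_{t_n}W^u_\ep(w_n)$. Setting $v_n := f_{-t_n}u_n \in W^u(w_n)$, the monotonicity of the intrinsic unstable metric along the flow (Remark \ref{rem: property s, cs}) yields
$$\ep < d^u(w_n,v_n) \leq d^u(f_{t_n}w_n, f_{t_n}v_n) \leq R,$$
and, by the same monotonicity propagated forward and backward, $d^u(f_tw_n, f_tv_n) \leq R$ for every $t \in (-\infty, t_n]$.

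Using the compactness of $X$ and the precompactness of the $R$-balls in the unstable leaves (which are $C^2$-embedded in the universal cover by Proposition \ref{prop: horosphere}(1)), I would pass to a subsequence and extract limits $w_n \to w \in X$ and $v_n \to v \in W^u(w)$ with $\ep \leq d^u(w,v) \leq R$ and $d^u(f_tw, f_tv) \leq R$ for every $t \in \R$. Choosing matched lifts $\widetilde w, \widetilde v \in T^1\widetilde M$ with $\widetilde v \in W^u(\widetilde w)$, the bound on the intrinsic unstable distance dominates the Riemannian distance between the basepoints, so $d_{\widetilde M}(\pi\gamma_{\widetilde w}(t), \pi\gamma_{\widetilde v}(t)) \leq R$ for all $t \in \R$, and the two geodesics are bi-asymptotic and distinct (since $d^u(w,v) \geq \ep$).

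The flat strip theorem, Proposition \ref{prop:no_focal_points}(6), then produces a flat totally geodesic strip in $\widetilde M$ bounded by $\gamma_{\widetilde w}$ and $\gamma_{\widetilde v}$, which in turn yields a nontrivial parallel Jacobi field along $\gamma_w$. Hence $w \in \Sing$, contradicting $w \in X \subset \Reg$. The statement for $W^s$ is proved by the mirror argument, using the monotonicity of $d^s$ in the opposite time direction: non-increasing under $f_t$ for $t \geq 0$ and non-decreasing for $t \leq 0$, so the analogous failure for stable leaves also forces bi-asymptoticity and a flat strip.

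The main technical obstacle is arranging the convergence of $v_n$ at the level of the (possibly non-embedded) unstable foliation in $T^1M$ and passing to the universal cover with compatible lifts so that the intrinsic bound $d^u$ survives in the limit. This is handled by working in $T^1\widetilde M$, where $W^u(\widetilde w_n)$ is $C^2$-embedded, and using the uniform bound $d^u(w_n, v_n) \leq R$ to confine the lifted $\widetilde v_n$ to a single compact portion of the lifted leaf; continuity of the unstable foliation on $\Reg$ then delivers the required limit.
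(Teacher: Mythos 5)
Your argument is correct in substance and rests on exactly the two pillars the paper invokes (via \cite[Proposition 3.13]{burns2018unique}): the monotonicity of the intrinsic leaf metrics from Remark \ref{rem: property s, cs}, and the flat strip theorem, Proposition \ref{prop:no_focal_points}(6), with the flat strip producing a parallel Jacobi field and hence a nonzero vector in $E^s(w)\cap E^u(w)$, i.e.\ $w\in\Sing$, contradicting $w\in X\subset\Reg$. The difference from the paper's route is organizational: the paper (following \cite{burns2018unique}) first proves the pointwise statement for a fixed $v\in\Reg$ --- where the offending points $u_t$ all lie in the fixed compact piece $\overline{W^u_R(v)}$ of a \emph{single} leaf, so the limit automatically stays on $W^u(v)$ and the bound $d^u(v,u)\geq\ep$ survives by continuity of the intrinsic metric on that fixed $C^2$ leaf --- and then upgrades to uniformity on the compact set $X$ by Dini's theorem applied to the monotone family of functions $v\mapsto\sup\{d^u(v,u):u\in f_{-t}W^u_R(f_tv)\}$. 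You instead extract a subsequential limit with \emph{varying} base points $w_n\to w$, which shifts the technical weight onto the cross-leaf limit: you must know that $v_n\in W^u(w_n)$, $v_n\to v$ forces $v\in W^u(w)$, and, more delicately, that the lower bound $d^u(w_n,v_n)>\ep$ does not collapse in the limit (extrinsic closeness of $v_n$ to $w$ does not by itself control intrinsic closeness). Both facts follow from the continuity of the horospherical foliations (leaves through nearby points are $C^1$-close on compact intrinsic pieces, lifted to $T^1\widetilde M$ where they are embedded), which you correctly flag; this is the same input the Dini route needs to make its sup-functions continuous, so neither approach is cheaper, but the paper's two-step organization keeps the compactness argument inside one leaf where no such care is needed. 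With that continuity fact made explicit your proof is complete, including the mirror argument for $W^s$.
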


The following lemma is also based on \cite{burns2018unique}. The only difference between the lemma stated below from its corresponding version in \cite{burns2018unique} is that $\Reg_T(\eta)$ is playing the role of $\Reg(\eta)$. However, this does not cause any issue in extending the proof because all the ingredients going into the proof are also available in our setting; such ingredients include Lemma \ref{lem: local product structure} for $\Reg_T(\eta)$ as well as Corollary \ref{cor: uniform minimality} and Lemma \ref{lem: 3}.

\begin{lem}\cite[Lemma 4.5]{burns2018unique}\label{lem: connect}
Given $T,\eta>0$, there exists $\delta>0$ such that for any $\rho \in (0,\delta]$ there exists $\T:=\T(\rho)>0$ such that for every $v,w \in \Reg_T(\eta)$ and $v' \in B(v,\delta)$ and $w'\in B(w,\delta)$, we have $f_t(W^u_\rho(v')) \cap W^{cs}_{\rho}(w') \neq \emptyset$ for every $t\geq \T$.
\end{lem}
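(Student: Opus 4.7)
The plan is to adapt the argument of \cite[Lemma 4.5]{burns2018unique} by combining the three tools available in our setting: the local product structure on $\Reg_T(\eta)$ (Lemma \ref{lem: local product structure}), the uniform density of unstable leaves (Corollary \ref{cor: uniform minimality}), and the expansion of small unstable plaques under the forward flow (Lemma \ref{lem: 3}). The idea is to flow $W^u_\rho(v')$ forward long enough that its image contains a large unstable plaque $W^u_R(f_tv')$, which by uniform density passes $\ep$-close to $w'$; one then uses the local product bracket to extract a point of $W^{cs}_\rho(w')$ on this plaque.

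First I would apply Lemma \ref{lem: local product structure} to obtain $\kappa\geq 1$ and $\delta_0>0$ such that $W^u$ and $W^{cs}$ have $(\kappa,\delta_0)$-local product structure at every point of $\Reg_T(\eta)$. Using that $\Reg_T(\eta)$ is a compact subset of the open set $\Reg$, fix $\delta\in(0,\delta_0/3]$ small enough that $X:=\overline{B(\Reg_T(\eta),\delta)}\subset\Reg$. Given $\rho\in(0,\delta]$, set $\ep:=\rho/(2\kappa)$, apply Corollary \ref{cor: uniform minimality} to obtain $R>0$ for which $W^u_R(x)$ is $\ep$-dense in $T^1M$ for every $x$, and apply Lemma \ref{lem: 3} to the compact set $X$ with radii $\rho$ and $R+\kappa\ep$ to obtain $\T=\T(\rho)>0$ such that
$$f_t\bigl(W^u_\rho(v')\bigr)\supset W^u_{R+\kappa\ep}(f_tv')\quad\text{for every }v'\in X\text{ and every }t\geq\T.$$

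Now for any $t\geq\T$, $v,w\in\Reg_T(\eta)$, $v'\in B(v,\delta)$, and $w'\in B(w,\delta)$, pick $u\in W^u_R(f_tv')$ with $d_K(u,w')<\ep$ via the $\ep$-density. Since $d_K(u,w)\leq d_K(u,w')+d_K(w',w)<\ep+\delta\leq 2\delta<\delta_0$ and $d_K(w',w)<\delta<\delta_0$, the local product structure at $w\in\Reg_T(\eta)$ applied to the pair $u,w'$ produces the bracket
$$[u,w']\in W^u(u)\cap W^{cs}(w'),\qquad \max\bigl\{d^u(u,[u,w']),\,d^{cs}(w',[u,w'])\bigr\}\leq\kappa\,d_K(u,w')\leq\kappa\ep=\tfrac{\rho}{2}.$$
The second inequality places $[u,w']\in W^{cs}_\rho(w')$; since $[u,w']$ lies on $W^u(u)$ at unstable distance at most $\kappa\ep$ from $u\in W^u_R(f_tv')$, it also belongs to $W^u_{R+\kappa\ep}(f_tv')\subset f_t(W^u_\rho(v'))$, delivering the required nonempty intersection.

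The main obstacle I anticipate is purely bookkeeping: one must coordinate $\delta,\rho,\ep,R$ so that the bracket estimates (bounded by $\kappa\ep$) stay inside the target $W^{cs}_\rho(w')$ while simultaneously leaving enough room inside $W^u_{R+\kappa\ep}(f_tv')$ to accommodate both the density displacement ($\leq R$) and the bracket displacement ($\leq\kappa\ep$); and one must choose $\delta$ small enough that a $\delta$-neighborhood of $\Reg_T(\eta)$ is compactly contained in $\Reg$ so that Lemma \ref{lem: 3} applies uniformly over $v'\in X$. Beyond this coordination, no new dynamical input is required, and the proof of \cite[Lemma 4.5]{burns2018unique} carries over essentially verbatim.
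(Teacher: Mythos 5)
Your proposal is correct and follows essentially the same route the paper intends: it assembles the proof of \cite[Lemma 4.5]{burns2018unique} from exactly the three ingredients the paper cites (the local product structure on $\Reg_T(\eta)$ from Lemma \ref{lem: local product structure}, the uniform $\ep$-density of unstable leaves from Corollary \ref{cor: uniform minimality}, and the unstable expansion of Lemma \ref{lem: 3} applied to a compact neighborhood of $\Reg_T(\eta)$ inside $\Reg$). The quantifier order ($\delta$ before $\rho$, $\T$ depending on $\rho$) and the bookkeeping of $\ep=\rho/(2\kappa)$, $R$, and $R+\kappa\ep$ are handled correctly, so no gap remains.
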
 
%

We are now ready to prove Proposition \ref{prop: specification}. While the statements and lemmas going into the proof are slightly different, the proof closely follows that of \cite[Proposition 5.5]{chen2018unique} with the role of \cite[Lemma 5.3]{chen2018unique} replaced by Lemma \ref{lem: connect}.
 
\begin{proof}[Proof of Proposition \ref{prop: specification}]
Let $T,\eta>0$ be given. We begin by fixing a regular closed geodesic $(v_0',t_0')$ as our reference orbit. From Lemma \ref{charsing}, there exist some $T',\eta'>0$ such that the entire orbit segment $(v_0',t_0')$ is contained in $\Reg_{T'}(\eta')$. By comparing $T'$ and the given $T$, we re-define $T$ as the larger of the two. Similarly, we re-define $\eta$ as the smaller of $\eta'$ and the given $\eta$. It then follows that $(v_0',t_0') \in \mathcal{G}_{T}(\eta)$. We set $v_{0}:=f_{-T}v_{0}'$ and
$t_{0}:=2T+t_{0}'$. Then $(v_{0},t_{0})$ is an extended orbit
segment obtained from $(v_{0}',t_{0}')$ whose endpoints $v_0,f_{t_0}v_0$ belong to $\Reg_{T}(\eta)$. 

From the uniform continuity of $\lt$, there exists $\delta_1>0$ such that
$$d_K(v,w) <\delta_1 \implies |\lt(v) - \lt(w)|<\eta/2.$$
By decreasing $\delta_1$, if necessary, we may suppose that $\delta_1$ is less than or equal to $\delta(T,\eta)$ from Lemma \ref{lem45}.
Then for any $w \in B_{t_0}(v_0,\delta_1)$, we have $(f_{T}w,t_0') \in \mathcal{G}_T(\eta/2)$. Setting $\alpha:=\exp\Big(-\frac{t_0'\eta}{8T}\Big)\in (0,1)$, it then follows from part \ref{eq: b} of Lemma \ref{lem45} that for any $w \in B_{t_0}(v_0,\delta_1)$ and $w' \in f_{-t_0}W^u_{\delta_1}(f_{t_0}w)$, we have
\begin{equation}\label{eq: decrease by alpha}
d^u(w,w')\leq \alpha d^u(f_{t_0}w,f_{t_0}w').
\end{equation}

Let $\delta_2>0$ be given by Lemma \ref{lem: connect}, and set $\delta:=\min\{\delta_1,\delta_2\}$.
Let $\rho \in (0,\delta)$ be an arbitrary given scale. Setting $$\rho':= \frac{1}{6}\rho e^{-\Lambda}(1-\alpha),$$ we will show that $\CC_T(\eta)$ has specification with $\T:=t_0+2\T_0$ where $\T_0:=\T(\rho')$ is from Lemma \ref{lem: connect}. 

Let $(v_1,t_1),\ldots,(v_n,t_n) \in\mathcal{G}_T(\eta)$ and $T_1=0,T_2,\ldots,T_n \in \R_{\geq 0}$ be given such that $T_{i+1}-T_i \geq t_i +\T$ for each $1 \leq i \leq n-1$. We will inductively build a sequence $w_1,\ldots,w_n \in T^1M$ with
\begin{equation}\label{eq: induction}
f_{T_j}w_j \in W^{cs}_{\rho'}(v_j) 
\end{equation}
such that $w:=w_n$ satisfies 
$$f_{T_i}w \in B_{t_i}(v_i,\rho) \text{ for each }1 \leq i \leq n.$$
We begin by setting $w_1=v_1$. Since $T_1=0$, we clearly have \eqref{eq: induction} for $j=1$. For the inductive step, we set up a few notations:
$$s_i := T_i+t_i\text{ and }\ell_i : = s_i+\T_0+t_0.$$
Suppose we have $w_j \in T^1M$ satisfying \eqref{eq: induction}. From $w_j\in T^1M$ we will build $w_{j+1} \in T^1M$ such that its forward orbit shadows that of $w_j$ for time $s_j$, and then after time $\T_0$ starts shadowing the orbit segment $(v_0,t_0)$, and finally satisfies \eqref{eq: induction} at time $T_{j+1}$.
To do so, consider $f_{s_j}w_j$ and $v_0$. Since $f_{T_j}w_j \in W^{cs}_{\rho'}(v_j)$ from the inductive hypothesis and $d^{cs}$ is non-increasing in forward time (Remark \ref{rem: property s, cs}), we have $f_{s_j}w_j \in W^{cs}_{\rho'}(f_{t_j}w_j)$. Lemma \ref{lem: connect} applied to $f_{s_j}w_j$ and $v_{0}$ implies that there exists $u_j \in T^1M$ such that
$$f_{s_j}u_j \in W^u_{\rho'}(f_{s_j}w_j) \text{ and } f_{s_j+\T_0}u_j\in W^{cs}_{\rho'}(v_0).$$
Using again the fact that $d^{cs}$ is non-increasing in forward time, we have $f_{\ell_j}u_j \in W^{cs}_{\rho'}(f_{t_0}v_0)$. Since $T_{j+1}-T_j \geq t_j +\T$ with $\T:=t_0+2\T_0$, we have $T_{j+1}-\ell_{j} \geq \T_0$ and applying Lemma \ref{lem: connect} to $f_{\ell_j}u_j$ and $v_{j+1}$ gives $w_{j+1} \in T^1M$ such that
$$f_{\ell_j}w_{j+1} \in W^u_{\rho'}(f_{\ell_j}u_j) \text{ and }f_{T_{j+1}}w_{j+1} \in W^{cs}_{\rho'}(v_{j+1}).$$
Since $d^u$ is non-increasing in backward time, we have
$$d^u(f_{s_j}w_{j+1},f_{s_j}w_j) \leq d^u(f_{\ell_j}w_{j+1},f_{\ell_j}u_j)+d^u(f_{s_j}u_{j},f_{s_j}w_j) \leq  \rho' +\rho' =2\rho'.$$
From \eqref{eq: decrease by alpha}, each time the backward orbits of $f_{s_j}w_{j+1}$ and $ f_{s_j}w_j$ pass nearby the orbit segment $(v_0,t_0)$ their $d^u$-distance decrease by a factor of $\alpha$.
Hence, for any $1 \leq i \leq j$,
$$d^u(f_{s_i}w_{j+1},f_{s_i}w_j) \leq 2\rho'\alpha^{j-i}.$$
It then follows that 
$$d^u(f_{s_i}w_{j+1},f_{s_i}w_i) \leq \sum\limits_{m=i}^j d^u(f_{s_i}w_{m+1},f_{s_i}w_m) \leq 2 \sum\limits_{m=i}^j\rho'\alpha^{j-m}\leq \frac{2\rho'}{1-\alpha}.$$
From \eqref{eq: metric comparison}, we have $$d_{t_i}(f_{T_i}w_{j+1},v_i) \leq d_{t_i}(f_{T_i}w_{j+1},f_{T_i}w_{i})+d_{t_i}(f_{T_i}w_i,v_i) \leq \frac{2\rho' e^\Lambda}{1-\alpha} + \rho' = \frac{1}{3}\rho+\rho' <\rho.$$
Hence, this proves that $w:=w_n$ satisfies $f_{T_i}w \in B_{t_i}(v_i,\rho) \text{ for each }1 \leq i \leq n$.
\end{proof}

From Proposition \ref{prop: specification}, we have the following version of the closing lemma whose formulation resembles \cite[Lemma 4.7]{burns2018unique}. In fact, its proof readily extends for the following lemma:

\begin{lem}\label{lem: closing} For all $\rho, \eta, \ep, T>0$, there exists $\T>0$ such that for every $(v,t) \in \CC_T(\eta)$, there exist $w\in B_t(v,\rho)$ and $\tau \in [\T-\ep,\T+\ep]$ such that $f_{t+\tau}w = w$.
\end{lem}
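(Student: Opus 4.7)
The plan is to combine the specification property on $\CC_T(\eta)$ from Proposition \ref{prop: specification} with the local product structure on $\Reg_T(\eta)$ from Lemma \ref{lem: local product structure}, following the template of \cite[Lemma 4.7]{burns2018unique}. The three ingredients needed there---specification on orbit segments with endpoints in a prescribed set, local product structure at every point of that set, and the monotonicity of the intrinsic distances $d^u, d^{cs}$ under the flow (Remark \ref{rem: property s, cs})---are all available here with $\Reg_T(\eta)$ playing the role of $\Reg(\eta)$, so the argument should extend with only notational changes.

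Concretely, given $\rho,\eta,\ep,T>0$, I would first fix $\kappa \geq 1$ and $\delta_0>0$ from Lemma \ref{lem: local product structure} providing $(\kappa,\delta_0)$-local product structure on $\Reg_T(\eta)$, and choose $\delta \in (0,\delta_0)$ small enough that $2\kappa\delta<\rho$ and the intrinsic distances $d^u, d^{cs}$ are uniformly comparable to $d_K$ on that scale. Let $\T_1=\T(\delta)$ be the specification gap from Proposition \ref{prop: specification} at scale $\delta$, and set the output of the lemma to be $\T:=\T_1+\ep$. For any $(v,t)\in \CC_T(\eta)$ and any $s \in [\T-\ep,\T+\ep]$, which forces $s \geq \T_1$, apply specification to the two orbit segments $(v,t),(v,t)$ placed at times $T_1=0$ and $T_2=t+s$. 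This produces $w' \in T^1M$ with $w'\in B_t(v,\delta)$ and $f_{t+s}w' \in B_t(v,\delta) \subset B(v,\delta_0)$. The local product structure at $v \in \Reg_T(\eta)$ then provides $z := W^u_{\kappa\delta}(w') \cap W^{cs}_{\kappa\delta}(f_{t+s}w')$ as a single point, which is the first approximation to the desired periodic orbit.

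To upgrade $z$ to an honest periodic orbit $w$ with $f_{t+s}w = w$ and $w \in B_t(v,\rho)$, one iterates a bracket construction based at $v$: inductively produce a sequence $\{w_n\}$ by taking the appropriate $W^u$--$W^{cs}$ intersections at successive forward iterates of $w'$ and $f_{t+s}w'$, and show that $\{w_n\}$ is Cauchy using the monotonicity of $d^u$ under the backward flow and of $d^{cs}$ under the forward flow (Remark \ref{rem: property s, cs}) combined with the uniform local product constants $(\kappa,\delta_0)$, which keep all iterates inside a compact neighborhood of $v$. The limit $w$ is then a periodic point with $f_{t+s}w=w$ and $w \in B_t(v,\rho)$, and setting $\tau := s \in [\T-\ep,\T+\ep]$ gives the conclusion. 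The main technical obstacle is this convergence step: although the orbit segment $(v,t)$ itself need not lie in the uniformly hyperbolic set $\mathcal{G}_T(\eta)$, the closing argument exploits the fact that the endpoints $v, f_tv$ lie in $\Reg_T(\eta)$ and that the large gap $s \geq \T_1$ spreads the two approximating segments far enough to absorb any defect. This is precisely the content of \cite[Lemma 4.7]{burns2018unique}, and the proof carries over to our setting verbatim, with $\Reg_T(\eta)$ and $\mathcal{G}_T(\eta)$ substituted for $\Reg(\eta)$ and $\mathcal{G}(\eta)$ throughout.
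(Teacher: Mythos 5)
Your overall plan (specification on $\CC_T(\eta)$ plus local product structure at the regular endpoints, following \cite[Lemma 4.7]{burns2018unique}) starts in the right place, but the closing mechanism you describe contains a genuine gap. The problem is the convergence step: Remark \ref{rem: property s, cs} only gives that $d^{cs}$ is non-increasing forward in time and $d^{u}$ non-increasing backward in time, and non-expansion is not contraction, so the bracket iterates $\{w_n\}$ you construct have no reason to be Cauchy. An orbit segment $(v,t)\in\CC_T(\eta)$ is controlled only at its two endpoints and may spend essentially all of its time arbitrarily close to $\Sing$, where no contraction at all is available; with monotonicity alone the successive brackets can remain a definite distance apart. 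This is precisely why \cite[Lemma 4.7]{burns2018unique} --- and the proof in this paper --- does not argue by contraction of bracket iterates: after the specification step, the orbit is closed up by a topological argument, namely the Brouwer fixed point theorem applied to a continuous map built from the $W^u$--$W^{cs}$ bracket near the regular endpoint. Moreover, the connecting piece in the specification step is routed through a reference orbit, and this is where the paper's one real modification of \cite{burns2018unique} occurs: instead of an arbitrary element of $\CC_T(\eta)$, one fixes a regular closed geodesic $(v_0',t_0')$, re-defines $T,\eta$ as in the proof of Proposition \ref{prop: specification}, and uses the extended segment $(f_{-T}v_0',\,t_0'+2T)\in\mathcal{G}_T(\eta)$, so that Lemma \ref{lem45} supplies genuine contraction estimates along that piece. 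Your construction uses no reference orbit and hence has no source of hyperbolicity anywhere, so the key step is unsupported; asserting that it "carries over verbatim" from \cite{burns2018unique} does not help, since the mechanism you describe is not the one used there.

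A secondary issue: you fix $s\in[\T-\ep,\T+\ep]$ in advance and claim the limit point satisfies $f_{t+s}w=w$ exactly. A $W^u$--$W^{cs}$ bracket construction can only close an orbit up to a time shift in the flow (center) direction; one obtains $f_{t+\tau}w=w$ for some $\tau$ close to the prescribed gap, and the interval $[\T-\ep,\T+\ep]$ in the statement exists precisely to absorb this shift. As written, your argument never addresses the flow direction.
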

\begin{proof}
Instead of using any arbitrary $(v_0,t_0) \in \CC(\eta)$ as a reference orbit as in \cite[Lemma 4.7]{burns2018unique}, we instead proceed similarly to the proof of Proposition \ref{prop: specification} by fixing any regular closed geodesic $(v_0',t_0')$. After possibly re-defining $T,\eta$ as in the proof of Proposition \ref{prop: specification}, we make use of the extended orbit segment $(v_0,t_0):=(f_{-T}v_0',2T+t_0') \in\mathcal{G}_T(\eta)$. The rest of the proof proceed as in the proof of \cite[Lemma 4.7]{burns2018unique} using the Brouwer fixed point theorem.  
\end{proof}

The following corollary is the corresponding version of \cite[Corollary 4.8]{burns2018unique}. Using Lemma \ref{lem: closing} in place of \cite[Lemma 4.7]{burns2018unique} and $\lt$ replacing $\lambda$, the proof follows verbatim. 
\begin{cor}
\label{cor:strong_closing_lemma}
For all $\rho,\eta, \ep,T,M>0$, there exists $\T>0$ such that for every $(v,t)$ with the property that there exist $p,s\in [0,M]$ such that $(f_pv,f_{t-p-s}v)\in \CC_T(\eta)$, there exists $w \in B_t(v,\rho)$ and $\tau \in [\T-\ep,T+\ep]$ such that $f_{t+\tau}w = w$ and $w \in \Reg$.
\end{cor}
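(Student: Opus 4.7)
\emph{Proof proposal.} The plan is to apply Lemma~\ref{lem: closing} to the ``good middle'' subsegment $(f_pv, t-p-s) \in \CC_T(\eta)$ to produce a periodic shadowing point $w_0$, then translate it back by time $p$ to obtain the desired $w$ that shadows the full segment $(v,t)$.

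\emph{Choice of scales and closing.} Since $T^1M$ is compact and the geodesic flow is continuous, the family $\{f_r\}_{r \in [-M,M]}$ is uniformly equicontinuous. Pick $\rho' \in (0,\rho)$ small enough that $d_K(x,y) < \rho'$ forces $d_K(f_r x, f_r y) < \rho$ for every $r \in [-M, M]$, and, using uniform continuity of $\lt$ on $T^1M$, also forces $\lt(y) \geq \eta/2$ whenever $\lt(x) \geq \eta$. Choose a small tolerance $\ep_0 > 0$ and apply Lemma~\ref{lem: closing} to the parameters $(\rho', \eta, \ep_0, T)$ to obtain $\T_0 > 0$. For any $(v,t)$ as in the hypothesis, $(f_pv, t-p-s) \in \CC_T(\eta)$, so Lemma~\ref{lem: closing} yields $w_0 \in B_{t-p-s}(f_pv, \rho')$ and $\tau_0 \in [\T_0 - \ep_0, \T_0 + \ep_0]$ with $f_{(t-p-s)+\tau_0} w_0 = w_0$. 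Define $w := f_{-p} w_0$, which is periodic with period $t + \tau$ where $\tau := \tau_0 - (p+s)$; by setting $\T := \T_0 - M$ and $\ep_0$ small enough to absorb the variation of $p+s$ over $[0, 2M]$ into the window $[\T - \ep, \T + \ep]$, the return-time condition is met.

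\emph{Shadowing and regularity.} For $r \in [p, t-s]$ the identity $f_r w = f_{r-p}w_0$ combined with $w_0 \in B_{t-p-s}(f_pv, \rho')$ gives $d_K(f_r w, f_r v) < \rho' < \rho$. For $r \in [0,p] \cup [t-s,t]$, each such $r$ is within time $M$ of a point where this bound already holds, so the choice of $\rho'$ via uniform equicontinuity yields $d_K(f_r w, f_r v) < \rho$. Hence $w \in B_t(v, \rho)$. For regularity, $f_pw = w_0 \in B(f_pv, \rho')$ together with $f_pv \in \Reg_T(\eta)$ gives $\lt(f_pw) \geq \eta/2 > 0$ by the choice of $\rho'$; Lemma~\ref{charsing} then forces $f_pw \notin \Sing$, and since $\Reg$ is $\F$-invariant, $w \in \Reg$.

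\emph{Main obstacle.} The principal technical point is the precise coordination of the closing tolerance $\ep_0$ with the outer-segment length bound $M$: absorbing the variation of $p+s$ over $[0, 2M]$ into the return-time window $[\T - \ep, \T + \ep]$ requires $\ep_0$ to be small relative to $\ep$ and $M$, and the exact bookkeeping follows that of \cite[Corollary 4.8]{burns2018unique}.
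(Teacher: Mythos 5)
Your treatment of the shadowing and regularity parts is fine and matches what the paper intends (it simply invokes the proof of \cite[Corollary 4.8]{burns2018unique} with Lemma \ref{lem: closing} in place of their closing lemma): pulling back by $f_{-p}$, controlling $d_K(f_r w, f_r v)$ on $[0,p]\cup[t-s,t]$ by uniform equicontinuity of $\{f_r\}_{|r|\le M}$, and getting $w\in\Reg$ from $\lt(w_0)>0$ near $\Reg_T(\eta)$ are all correct. The genuine gap is the return time, which is precisely the point of this ``strong'' closing lemma (read $[\T-\ep,\T+\ep]$; the ``$T+\ep$'' in the statement is a typo).

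Your $w=f_{-p}w_0$ has period $(t-p-s)+\tau_0$, i.e.\ $t+\tau$ with $\tau=\tau_0-(p+s)$. Since $\tau_0\in[\T_0-\ep_0,\T_0+\ep_0]$ while $p+s$ ranges over all of $[0,2M]$ as $(v,t)$ varies, $\tau$ sweeps an interval of length $2M+2\ep_0$; since $\T$ must be fixed before $p,s$ are known, no choice of $\ep_0$ (which only controls the $2\ep_0$ part, not the $2M$ part) can compress this into a window of width $2\ep$ when $\ep<M$. So the step ``absorb the variation of $p+s$ by taking $\ep_0$ small'' is false, and it is not a bookkeeping detail: the uniform two-sided window is exactly what allows the paper to get equidistribution for every $\Delta>0$ (see the remark after Proposition \ref{prop:equi_distrib}); your version only yields the weaker statement for $\Delta$ larger than roughly $2M$, as in the earlier works. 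The fix is to not use Lemma \ref{lem: closing} as a black box but to rerun its specification-plus-Brouwer construction on the middle segment $(f_pv,\,t-p-s)$, compensating for $p+s$ inside the construction: the specification property (Proposition \ref{prop: specification}) only requires the gluing gaps to be at least $\T(\rho')$, so one may prescribe a gap lengthened by exactly $p+s$; then the designed period is $t+\T$ with $\T$ independent of $p,s$, and the local-product/fixed-point step perturbs it by less than $\ep$ once the scales are small. Equivalently, one needs (and the construction provides) a variant of Lemma \ref{lem: closing} in which the return time can be prescribed, up to $\ep$, to be any value in $[\T,\T+2M]$, applied here with the value $\T+p+s$.
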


\subsection{Role of the pressure gap assumption: $P(\vphi)>P(\Sing,\vphi)$}

In this subsection, we describe how the pressure gap assumption on $\vphi$ can be used to meet two remaining assumptions in Theorem \ref{thm:C-T_criteria} using the geometry of manifolds without focal points. 

Given a set of orbit segments $\cal{C} \subset T^1M \times [0,\infty)$, $\cal{M}(\cal{C})$ refers to the set of all $\F$-invariant measures on $T^1M$ obtained as weak-$*$ limits of convex combinations of empirical measures along orbit segments in $\cal{C}$; see \cite[Section 5]{burns2018unique}. Moreover, we are denoting $\cal{M}(\Sing \times [0,\infty))$ by $\cal{M}(\Sing)$.

Setting $\displaystyle \mathcal{M}_{\lt}(\eta):=\Big\{\mu \in \mathcal{M}(\mathcal{F}) \colon \int \lt~d\mu \leq \eta\Big\}$, Lemma \ref{charsing} and the entropy-expansivity of the geodesic flows over manifolds with no focal points \cite{liu2016entropy} imply that
\begin{equation}\label{eq: M(Sing)}
\mathcal{M}(\Sing) =\bigcap\limits_{T,\eta>0} \mathcal{M}_{\lt}(\eta);
\end{equation}
see \cite[Proposition 5.2]{burns2018unique} and \cite[Proposition 7.3]{chen2018unique}.
Note that $\mathcal{M}_{\lt}(\eta)$ is a nested subset of $\mathcal{M}(\mathcal{F})$ that converges to $\mathcal{M}(\mathcal{\Sing})$ as $T \to \infty$ and $\eta \to 0$. It follows that the third item \ref{eq: CT 3} of Climenhaga-Thompson's criteria holds for all sufficiently large $T$ and sufficiently small $\eta$:
 
\begin{prop}\cite[Proposition 7.3]{chen2018unique}\label{prop: CKP7.3}
There exist $T_0,\eta_0>0$ such that $P(\vphi)>P([\mathcal{B}_{T}(\eta)],\vphi)$ for any $T \geq T_0$ and $\eta \leq \eta_0$.
\end{prop}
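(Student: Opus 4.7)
The strategy is to argue by contradiction, combining the pressure gap hypothesis $P(\vphi) > P(\Sing, \vphi)$ with the identification \eqref{eq: M(Sing)} and upper semi-continuity of the entropy map (guaranteed by the entropy-expansivity of \cite{liu2016entropy}).

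First, I would reduce the pressure $P([\mathcal{B}_T(\eta)], \vphi)$ to a variational expression over a nice class of invariant measures. If $(v,t) \in [\mathcal{B}_T(\eta)]$, then by definition there exist $s_0,t_0 \in [0,1]$ such that $(f_{-s_0}v, t+s_0+t_0) \in \mathcal{B}_T(\eta)$. Since $\lt$ is a bounded continuous function (bounded by some constant $L = L(T)$), a direct splitting of the integral gives
\[
\frac{1}{t}\int_0^t \lt(f_\tau v)\,d\tau \leq \eta\cdot\frac{t+s_0+t_0}{t} + \frac{2L}{t} = \eta + O(1/t).
\]
Consequently, any invariant measure $\mu$ obtained as a weak-$*$ limit of empirical measures along orbit segments in $[\mathcal{B}_T(\eta)]$ of increasing length satisfies $\int \lt\,d\mu \leq \eta$, i.e. $\mu \in \mathcal{M}_{\lt}(\eta)$. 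Running the standard variational argument (choose $(t,\delta)$-separated subsets of $[\mathcal{B}_T(\eta)]_t$ that nearly realize $\Lambda([\mathcal{B}_T(\eta)],\vphi,\delta,t)$ and form normalized empirical measures; see \cite[Proposition 5.1]{burns2018unique}) yields
\[
P([\mathcal{B}_T(\eta)], \vphi) \leq \sup\Big\{h_\mu(\F) + \int \vphi\,d\mu : \mu \in \mathcal{M}_{\lt}(\eta)\Big\}.
\]

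Next, suppose for contradiction that the conclusion fails. Then one can extract sequences $T_n \to \infty$, $\eta_n \to 0$, and invariant measures $\mu_n \in \mathcal{M}_{\lambda_{T_n}}(\eta_n)$ with $h_{\mu_n}(\F) + \int \vphi\,d\mu_n \to P(\vphi)$. By compactness of $\mathcal{M}(\F)$, passing to a subsequence produces a weak-$*$ limit $\mu$. Fix any $T,\eta > 0$. For all $n$ large enough that $T_n \geq T$ and $\eta_n \leq \eta$, the monotonicity of $\lt$ in $T$ (Corollary \ref{incrsl}) gives $\int \lt\,d\mu_n \leq \int \lambda_{T_n}\,d\mu_n \leq \eta_n \leq \eta$, i.e. $\mu_n \in \mathcal{M}_{\lt}(\eta)$. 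Since $\lt$ is continuous, $\mathcal{M}_{\lt}(\eta)$ is weak-$*$ closed, so $\mu \in \mathcal{M}_{\lt}(\eta)$. As $T,\eta$ were arbitrary, \eqref{eq: M(Sing)} yields $\mu \in \mathcal{M}(\Sing)$.

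Finally, upper semi-continuity of the entropy map together with continuity of $\mu \mapsto \int \vphi\,d\mu$ gives
\[
h_\mu(\F) + \int \vphi\,d\mu \geq \limsup_{n\to\infty}\Big(h_{\mu_n}(\F) + \int \vphi\,d\mu_n\Big) = P(\vphi),
\]
while $\mu \in \mathcal{M}(\Sing)$ and the variational principle applied to the subsystem on $\Sing$ give $h_\mu(\F) + \int \vphi\,d\mu \leq P(\Sing,\vphi)$. Combined with the pressure gap, this forces $P(\vphi) \leq P(\Sing,\vphi) < P(\vphi)$, a contradiction. The main technical step is the first one, the $O(1/t)$ control needed to transfer the averaged bound on $\lt$ from $\mathcal{B}_T(\eta)$ to its thickening $[\mathcal{B}_T(\eta)]$; once that estimate is in place, the remainder is a compactness and semi-continuity argument that goes through verbatim as in \cite[Proposition 7.3]{chen2018unique}.
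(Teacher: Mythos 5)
Your proposal is correct and takes essentially the same approach as the paper: bound $P([\mathcal{B}_T(\eta)],\vphi)$ by $\sup\{P_\mu(\vphi):\mu\in\mathcal{M}_{\lt}(\eta)\}$ via the inclusion $\mathcal{M}([\mathcal{B}_T(\eta)])\subset\mathcal{M}_{\lt}(\eta)$, then play \eqref{eq: M(Sing)}, weak-$*$ compactness, and upper semi-continuity of the entropy map against the pressure gap. The paper states this as a direct $\ep$-argument and defers the compactness/semi-continuity details to \cite{chen2018unique}; your contradiction argument (and the $O(1/t)$ control for the thickening $[\mathcal{B}_T(\eta)]$) simply spells out those same details.
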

\begin{proof}
From \eqref{eq: M(Sing)}, given any $\ep>0$, there exist $T_0,\eta_0>0$ such that for any $\mu \in \mathcal{M}_{\la_{T_0}}(\eta_0)$, 
$$P_\mu(\vphi) <  P(\Sing,\vphi)+\ep.$$ 
Since $\mathcal{M}([\mathcal{B}_{T_0}(\eta_0)])$ is a subset of $\mathcal{M}_{\la_{T_0}}(\eta_0)$,
by choosing $\ep$ smaller than the size of the pressure gap $P(\vphi)- P(\Sing,\vphi)$, the corresponding $T_0,\eta_0$ are the required constants. See \cite[Proposition 7.3]{chen2018unique} for details.
\end{proof}

Using the similarity of geometric features between manifolds without focal points and nonpositively curved manifolds, we have the following estimate on $P^\perp_{\text{exp}}(\vphi)$.

\begin{prop}\cite[Proposition 5.4]{burns2018unique}
$P^\perp_{\text{exp}}(\vphi)\leq P(\Sing,\vphi)$
\end{prop}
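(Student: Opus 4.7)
\emph{Proof proposal.} My plan is to establish the set-theoretic inclusion $\mathrm{NE}(\ep) \subseteq \Sing$ for all sufficiently small $\ep > 0$; this mirrors the corresponding step in the nonpositive curvature setting \cite[Proposition 5.4]{burns2018unique}. Granting this inclusion, any $\F$-invariant ergodic measure $\mu$ with $\mu(\mathrm{NE}(\ep)) = 1$ is automatically supported on the closed $\F$-invariant set $\Sing$, so the variational principle applied to $(\Sing, \F|_{\Sing}, \vphi|_{\Sing})$ yields $h_\mu(f_1) + \int \vphi\,d\mu \leq P(\Sing, \vphi)$. Taking the supremum over such $\mu$ gives $P^\perp_{\exp}(\vphi, \ep) \leq P(\Sing, \vphi)$, and letting $\ep \to 0$ produces the stated inequality.

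For the inclusion, fix $\ep > 0$ smaller than half the injectivity radius of $M$ and take $v \in \mathrm{NE}(\ep)$. By definition there exists a sequence $w_n \in \G_\ep(v)$ with $w_n \notin f_{[-n,n]}(v)$, and by compactness of $\G_\ep(v)$, a subsequence converges to some $w_\infty \in \G_\ep(v)$. The first task is to arrange that $w_\infty$ does not lie on the $\F$-orbit of $v$. The periodic cases are handled separately: if $v$ is periodic and regular, then Lemma \ref{charsing} together with the positivity of $\lambda_T(v)$ for some $T$ collapse $\G_\ep(v)$ onto the closed orbit itself for $\ep$ small, so that $\G_\ep(v) \subseteq f_{[-s, s]}(v)$ for some finite $s$, contradicting $v \in \mathrm{NE}(\ep)$; if $v$ is periodic and singular, then $v \in \Sing$ directly. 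When $v$ is non-periodic, injectivity of the flow on each compact arc of $f_\R(v)$ forces $d_K(w_n, f_{[-m,m]}(v))$ to stay uniformly bounded below on each fixed $m$ (otherwise $w_n$ would coincide with $f_\tau v$ for some $|\tau| \leq m < n$, contradicting $w_n \notin f_{[-n,n]}(v)$), and hence $w_\infty$ cannot coincide with any $f_\tau v$.

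With $w_\infty \in \G_\ep(v) \setminus \F\cdot v$ secured, I lift $v, w_\infty$ to the universal cover $\tilde M$: since $d(f_t v, f_t w_\infty) \leq \ep$ on $M$ for all $t \in \R$ and the deck group acts discretely, a single deck element $g$ arranges that $d(\tilde\gamma_{\tilde v}(t), \tilde\gamma_{g\cdot \tilde w_\infty}(t))$ is uniformly bounded on $\R$, so the two lifted geodesics are bi-asymptotic. The flat strip theorem (Proposition \ref{prop:no_focal_points}(6)) then furnishes a flat, totally geodesically immersed strip bounded by these geodesics. Because the two geodesics are not cofoliate along the flow direction, the strip is nontrivial, and its width direction gives a nonzero orthogonal parallel Jacobi field $J$ along $\gamma_v$; this $J$ is simultaneously stable and unstable, so $E^s(v) \cap E^u(v) \neq \{0\}$, and Proposition \ref{prop:no_focal_points}(5) concludes $v \in \Sing$. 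The main obstacle is the orbit-avoidance step for non-periodic $v$, which demands a careful uniform lower bound on the distance from $w_n$ to compact portions of $\F\cdot v$; once this and the lift-compatibility are in place, the remaining arguments follow the blueprint of \cite{burns2018unique} using Lemma \ref{charsing}, Lemma \ref{lem: local product structure}, and the flat strip theorem.
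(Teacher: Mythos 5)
Your overall strategy is the same as the paper's: the paper simply observes that the only geometric input in \cite[Proposition 5.4]{burns2018unique} is the flat strip theorem, which still holds without focal points, so that $\mathrm{NE}(\ep)\subset\Sing$ for all small $\ep$, and then the variational principle on $\Sing$ finishes the job exactly as in your first paragraph. That reduction step of yours is fine.

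The gap is in your proof of the inclusion, specifically the orbit-avoidance step that you yourself flag. The parenthetical justification --- that if $d_K(w_n, f_{[-m,m]}(v))$ were not bounded below then $w_n$ ``would coincide with $f_\tau v$ for some $|\tau|\le m$'' --- is a non sequitur: being close to the compact arc does not put $w_n$ on the arc, and nothing you have said prevents the limit $w_\infty$ from lying on the orbit of $v$ (say $w_\infty=f_\tau v$ with $|\tau|$ large), in which case your flat strip is trivial and the argument yields nothing. The limit-extraction detour is also unnecessary, and avoiding it removes both this gap and your separate (and somewhat shaky) treatment of regular periodic $v$, where the required smallness of $\ep$ would have to be uniform in $v$ rather than orbit-dependent. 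The standard argument runs in the contrapositive and is applied to \emph{every} $w\in\G_\ep(v)$ at once: suppose $v\notin\Sing$ and let $w\in\G_\ep(v)$. Lift to the universal cover with $\widetilde d(\pi\widetilde v,\pi\widetilde w)\le\ep$; since $\ep$ is below the injectivity radius, $\widetilde d(\gamma_{\widetilde v}(t),\gamma_{\widetilde w}(t))\le\ep$ for all $t$, so the flat strip theorem applies. A strip of positive width gives a nonzero orthogonal parallel Jacobi field along $\gamma_v$, contradicting $v\notin\Sing$; hence the strip is degenerate, $\gamma_{\widetilde w}$ is a reparametrization of $\gamma_{\widetilde v}$, and since geodesics in the universal cover are minimizing (no focal points implies no conjugate points), $w=f_\tau v$ with $|\tau|=\widetilde d(\pi\widetilde v,\pi\widetilde w)\le\ep$. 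Thus $\G_\ep(v)\subset f_{[-\ep,\ep]}(v)$, so $v\notin\mathrm{NE}(\ep)$, which is exactly $\mathrm{NE}(\ep)\subset\Sing$ with a smallness condition on $\ep$ that is uniform over $T^1M$. With that replacement your write-up matches the intended proof.
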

\begin{proof}
The proof for \cite[Proposition 5.4]{burns2018unique} works here without any modifcation. In fact, the main argument of the proof there is that the flat strip theorem \cite{eberlein1996geometry} implies that $\text{NE}(\ep) \subset \Sing$ for all small enough $\ep>0$. Since the flat strip theorem remains to hold for manifolds with no focal points \cite{OSullivan:1976cf}, so does the proof. See \cite[Proposition 5.4]{burns2018unique} for details.
\end{proof}
With the pressure gap assumption on $\vphi$, we immediately obtain the following corollary and complete the proof of Theorem \ref{thm: 1}.

\begin{cor}
$P(\vphi)>P^\perp_{\text{exp}}(\vphi)$.
\end{cor}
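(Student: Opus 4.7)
The plan is to observe that this corollary is an immediate consequence of the preceding proposition combined with the standing pressure gap hypothesis on $\vphi$. Specifically, the previous proposition establishes the bound
\[
P^\perp_{\text{exp}}(\vphi) \leq P(\Sing,\vphi),
\]
while the hypothesis of Theorem \ref{thm: 1} (which is in force throughout Section \ref{sec: proof of thm A}) asserts that $\vphi$ satisfies the pressure gap condition
\[
P(\vphi) > P(\Sing,\vphi).
\]
Chaining these two inequalities yields $P(\vphi) > P^\perp_{\text{exp}}(\vphi)$ at once, so no separate argument is required beyond quoting the two inputs. There is no real obstacle here; the work has all been done in establishing the preceding proposition (whose proof relies on the flat strip theorem to locate non-expansive points inside $\Sing$) and in formulating the pressure gap assumption. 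Once this corollary is noted, together with Propositions \ref{prop: specification} and \ref{prop: CKP7.3} and the Bowen property on $\mathcal{G}_T(\eta)$, all three hypotheses of the Climenhaga--Thompson criteria (Theorem \ref{thm:C-T_criteria}) are verified for the decomposition $(\mathcal{B}_T(\eta),\mathcal{G}_T(\eta),\mathcal{B}_T(\eta))$ with $T$ large and $\eta$ small, completing the proof of Theorem \ref{thm: 1}.
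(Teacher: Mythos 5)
Your proposal is correct and matches the paper's argument exactly: the corollary is obtained by chaining the preceding proposition's bound $P^\perp_{\text{exp}}(\vphi)\leq P(\Sing,\vphi)$ with the standing pressure gap hypothesis $P(\vphi)>P(\Sing,\vphi)$. Nothing more is needed, which is why the paper states it without a separate proof.
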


\section{Proof of Theorem \ref{thm: 2}}\label{sec: proof of thm B}

\subsection{Equidistribution of the weighted regular closed geodesicser}

In this subsection, we show the unique equilibrium state is the weak-$*$
limit of the weighted regular closed geodesics. 
\begin{prop}
\label{prop:equi_distrib}The unique equilibrium state $\mu$ is the
weak-$*$ limit of weighted regular closed geodesics, that is, for all
$\Delta>0$
\[
\mu=\lim_{t\to\infty}\frac{\sum_{\gamma\in{\rm Per}_{R}(t-\D,t]}e^{\Psi(\g)}\delta_{\g}}{\Lambda_{\Reg,\D}^{*}(\vp,t)}
\]
where $\delta_{\gamma}$ is the normalized Lebegue measure supported
on the closed geodesic $\g.$
\end{prop}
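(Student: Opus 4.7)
The plan is to apply Proposition \ref{prop:pressure_equal_equidistribution}, which reduces the statement to the equality $P_{\Reg}^*(\vp) = P(\vp)$; combined with the uniqueness of the equilibrium state from Theorem \ref{thm: 1} and Proposition \ref{prop:walters_eq_states}, this yields equidistribution for every $\Delta > 0$. Since $\overline{P}^*_{\Reg}(\vp)$ is $\Delta$-independent by definition, it suffices to prove
\[
\overline{P}^*_{\Reg}(\vp) \leq P(\vp) \quad \text{and} \quad \underline{P}^*_{\Reg,\Delta}(\vp) \geq P(\vp) \quad \text{for every } \Delta > 0.
\]

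For the upper bound I would pick one representative $v_\gamma$ on each regular closed geodesic $\gamma$ with $|\gamma| \in (t - \Delta, t]$. For sufficiently small $\delta > 0$ the collection $\{v_\gamma\}$ is $(t - \Delta, \delta)$-separated, and the Bowen property applied along a single periodic orbit gives $\Phi(\gamma) = \int_0^{t-\Delta}\vp(f_\tau v_\gamma)\,d\tau + O(1)$. Comparing the weighted sum against $\Lambda(T^1M \times [0,\infty), \vp, \delta, t - \Delta)$ and letting $t \to \infty$ yields $\overline{P}^*_{\Reg}(\vp) \leq P(\vp)$.

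The lower bound is the main content and uses the pressure gap together with the closing lemma. By Proposition \ref{prop: CKP7.3}, fix $T, \eta > 0$ with $P([\mathcal{B}_T(\eta)], \vp) < P(\vp)$, and let $M > 0$ be associated to the $[\,\cdot\,]$-thickening. The pressure gap ensures that, up to a uniformly negligible contribution, the topological pressure is carried by orbit segments $(v, t)$ satisfying $(f_p v, t - p - s) \in \CC_T(\eta)$ for some $p, s \in [0, M]$. For small $\delta > 0$ and large $t$, extract a $(t, \delta)$-separated collection $E_t$ of such segments satisfying
\[
\sum_{v \in E_t} e^{\Phi(v, t)} \geq C_1 e^{t P(\vp)}
\]
for a uniform $C_1 > 0$. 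Now fix $\Delta > 0$, choose $\rho \ll \delta$ and $\ep < \Delta/2$, and apply Corollary \ref{cor:strong_closing_lemma} to obtain $\T > 0$ such that each $v \in E_t$ produces a regular periodic point $w_v \in B_t(v, \rho)$ of period $\tau_v \in [\T - \ep, \T + \ep]$. Since $E_t$ is $(t, \delta)$-separated and $\rho \ll \delta$, the resulting closed geodesics $\gamma_v$ are pairwise distinct up to a bounded multiplicity, and the Bowen property on $\mathcal{G}_T(\eta)$ combined with the uniform length bound $M$ on the bad pieces yields $e^{\Phi(\gamma_v)} \geq C_2 e^{\Phi(v, t)}$. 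Every $\gamma_v$ has length in $(s - \Delta, s]$ for $s := t + \T + \ep$, so
\[
\Lambda^*_{\Reg, \Delta}(\vp, s) \geq C_2 \sum_{v \in E_t} e^{\Phi(v, t)} \geq C_1 C_2 \, e^{(s - \T - \ep)P(\vp)},
\]
and taking $\liminf_{s \to \infty}\tfrac{1}{s}\log$ gives $\underline{P}^*_{\Reg, \Delta}(\vp) \geq P(\vp)$.

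The most delicate step is the book-keeping in the lower bound: coordinating the perturbation scale $\rho$ from the closing lemma, the period tolerance $\ep$, the waiting time $\T$, and the target width $\Delta$ so that distinct elements of $E_t$ produce distinct regular closed geodesics with comparable $\vp$-weights that all fall into a single window of length $\Delta$. Each individual ingredient (Proposition \ref{prop: CKP7.3}, Corollary \ref{cor:strong_closing_lemma}, and the Bowen property on $\mathcal{G}_T(\eta)$) is already in place from earlier sections.
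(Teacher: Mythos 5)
Your proposal is correct and follows essentially the same route as the paper: reduce via Proposition \ref{prop:pressure_equal_equidistribution} to the equality $P^{*}_{\Reg}(\vp)=P(\vp)$, which is obtained from two-sided counting bounds on $\Lambda^{*}_{\Reg,\Delta}(\vp,t)$ proved with the pressure gap (Proposition \ref{prop: CKP7.3}) and the strong closing lemma (Corollary \ref{cor:strong_closing_lemma}) --- precisely the adaptation of \cite[Proposition 6.4]{burns2018unique} that the paper cites. The only small inaccuracy is that the multiplicity of the assignment $v\mapsto\gamma_v$ is bounded by a constant times $t$ rather than by a uniform constant (which is why the paper's lemma has the factor $\beta/t$ in the lower bound), but this does not affect the exponential growth rate and hence not the conclusion.
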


The above proposition follows directly from the following lemma and
Proposition \ref{prop:pressure_equal_equidistribution}.
\begin{lem}
For any $\D>0$, there exists $\beta>0$ such that the regular closed
geodesics satisfy 
\[
\frac{\beta}{t}e^{tP(\vp)}\leq\Lambda_{\Reg,\D}^{*}(\vp,t)\leq\beta^{-1}e^{tP(\vp)}
\]
for $t$ big enough. 
\end{lem}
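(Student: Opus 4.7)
The plan is to prove the two inequalities separately, using the decomposition machinery from Section \ref{sec: proof of thm A}, the Bowen property on $\mathcal{G}_T(\eta)$, and the strong closing lemma (Corollary \ref{cor:strong_closing_lemma}). Both bounds are standard in spirit and descend from Bowen's original argument, but in the present non-uniformly hyperbolic setting they have to be combined with the pressure gap provided by Proposition \ref{prop: CKP7.3}.

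For the upper bound, from each $\g\in\mathrm{Per}_R(t-\D,t]$ I would pick a single vector $v_\g$ tangent to $\g$. For $\delta>0$ chosen smaller than an injectivity scale, the collection $\{v_\g\}$ is $(t-\D,\delta)$-separated: two vectors on distinct closed geodesics whose orbits stay $d_K$-close for time $t-\D$ would have to coincide in view of the relations \eqref{eq: metric comparison}. Since $|\Phi(v_\g,t-\D)-\Phi(\g)|\le\D\max|\vp|$, summing yields
\[
\Lambda^*_{\Reg,\D}(\vp,t)\le e^{\D\max|\vp|}\,\Lambda(T^1M\times[0,\infty),\vp,\delta,t-\D),
\]
and the right-hand side is $\le\beta^{-1}e^{tP(\vp)}$ once $t$ is large, by the definition of topological pressure.

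For the lower bound, I would fix $T,\eta$ so that Proposition \ref{prop: CKP7.3} gives $P(\vp)>P([\cal B_T(\eta)],\vp)+2\ep_0$ for some $\ep_0>0$. For each large integer $n$, pick an $(n,\delta)$-separated set whose $\Phi$-sum is at least $\tfrac12 e^{n(P(\vp)-\ep_0)}$. The strict pressure gap combined with the decomposition $(\cal B_T(\eta),\cal G_T(\eta),\cal B_T(\eta))$ forces a positive fraction of this weight to come from orbit segments $(x,n)$ whose initial and terminal $\cal B_T(\eta)$-pieces have lengths bounded by some $M=M(\ep_0)$, so that $(f_p x,n-p-s)\in\CC_T(\eta)$ for some $p,s\in[0,M]$. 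Applying Corollary \ref{cor:strong_closing_lemma} to each such $(x,n)$ produces a regular closed geodesic $\g_x$ with period in $[n+\T-\ep,n+\T+\ep]$ and orbit shadowing $(x,n)$ at a small scale $\rho$. The Bowen property on $\cal G_T(\eta)$ together with the bounded prefix/suffix in $\cal B_T(\eta)$ yields a uniform $K$ with $|\Phi(\g_x)-\Phi(x,n)|\le K$.

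Two issues remain in converting this into a lower bound on $\Lambda^*_{\Reg,\D}(\vp,t)$. First, several $x$ may close to the same geodesic $\g_x$; however, $(n,\delta)$-separation of the starting points forces their base points on the common geodesic to be $\delta$-separated along the geodesic, which bounds the multiplicity by $O(n/\delta)$. Second, the periods $\tau_x$ fill a window of length $2\ep+O(1)$, which I would partition into finitely many $\D$-intervals and extract via pigeonhole one interval $(t'-\D,t']$ carrying a positive fraction of the weight. Combining these estimates produces $\Lambda^*_{\Reg,\D}(\vp,t')\ge(\beta'/n)e^{n(P(\vp)-\ep_0)}$ with $t'=n+O(1)$; taking $\ep_0$ arbitrarily small and interpolating over the finitely many integer shifts relating $t'$ to any sufficiently large $t$ gives the claimed lower bound. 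The main obstacle is the multiplicity count, which in this non-uniformly hyperbolic setting must exploit the $(\kappa,\delta)$-local product structure on $\Reg_T(\eta)$ from Lemma \ref{lem: local product structure} to guarantee that each closed geodesic produced is witnessed by only $O(n)$ separated orbit segments; this $O(n)$ bound is precisely what is responsible for the $1/t$ factor in the statement.
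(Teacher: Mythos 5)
Your overall strategy is the same as the paper's (the paper simply reruns the proof of \cite[Proposition 6.4]{burns2018unique} with their Corollary 4.8 replaced by Corollary \ref{cor:strong_closing_lemma}): close orbit segments with regular endpoints via the strong closing lemma, control the integrals by the Bowen property, and bound the multiplicity of each produced closed geodesic by $O(t/\delta)$ (that count, by the way, needs only a triangle inequality along the closed orbit, not the local product structure of Lemma \ref{lem: local product structure}). The genuine gap is that your argument cannot deliver the bounds at the exact rate $P(\vp)$, which is the whole content of the lemma. On the lower side you start from ``an $(n,\delta)$-separated set of weight at least $\tfrac12 e^{n(P(\vp)-\ep_0)}$''; for a fixed scale $\delta$ the definition of pressure only guarantees such sets along a subsequence of times (since $\limsup_t \tfrac1t\log\Lambda(\delta,t)=P(\vp,\delta)\le P(\vp)$), and, more seriously, the $\ep_0$-loss cannot be removed by ``taking $\ep_0$ arbitrarily small and interpolating'': for each fixed $\ep_0$ the constants $M(\ep_0)$, $\T$, $\beta'$ and the threshold in $t$ change, so what you actually prove is $\Lambda^*_{\Reg,\D}(\vp,t)\ge \beta(\ep_0)\,t^{-1}e^{t(P(\vp)-\ep_0)}$, which is exponentially weaker than $\tfrac{\beta}{t}e^{tP(\vp)}$ and does not imply the statement (the sharp polynomial-factor form is exactly what makes the equidistribution work for every $\D$). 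The same defect appears in your upper bound: ``by the definition of topological pressure'' only gives $\Lambda(\vp,\delta,t-\D)\le e^{t(P(\vp)+\ep)}$ for large $t$, not $\beta^{-1}e^{tP(\vp)}$. The missing ingredient in both directions is the uniform counting estimates at a fixed scale that come out of the Climenhaga--Thompson machinery (specification, the Bowen property on $\gt(\eta)$, and the pressure gap yield, for all large $t$, a lower bound $c\,e^{tP(\vp)}$ for partition sums over orbit segments with endpoints in $\Reg_T(\eta)$ and an upper bound $C e^{tP(\vp)}$ for the full partition sum, via approximate super/sub-multiplicativity); these estimates are what the argument of \cite[Proposition 6.4]{burns2018unique}, which the paper imports, actually rests on.

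A secondary issue: your separation claim in the upper bound---one tangent vector per geodesic in $\mathrm{Per}_R(t-\D,t]$ is $(t-\D,\delta)$-separated for a fixed small $\delta$---is true but does not follow ``in view of the relations \eqref{eq: metric comparison}''. The correct argument is that two distinct closed geodesics whose orbits stay $\delta$-close for a time comparable to their periods must, by periodicity, shadow each other (up to a bounded time shift) for all time; their lifts then stay a bounded distance apart, so the Flat Strip Theorem (Proposition \ref{prop:no_focal_points}) forces them to bound a flat strip and hence be singular, contradicting regularity. It is this expansivity-off-$\Sing$ argument, not the metric comparisons, that makes a single scale $\delta$ work uniformly in $t$.
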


\begin{proof}
It follows the proof of \cite[Proposition 6.4]{burns2018unique} after
replacing \cite[Corollary 4.8]{burns2018unique} in the proof by Corollary
\ref{cor:strong_closing_lemma}.
\end{proof}
\begin{rem}
We point out again that Proposition \ref{prop:equi_distrib}
is stronger than the similar result found in \cite[Proposition 8.13]{chen2018unique}. Indeed, Proposition \ref{prop:equi_distrib} works for aribtrarily small $\Delta>0$, whereas
$\Delta$ had to be sufficient large in \cite[Proposition 8.13]{chen2018unique}. This is due to slightly different and stronger 
\end{rem}

\subsection{The $K$-property on $\mu_\vphi$}
In this subsection, we show that the unique equilibrium state $\mu_\vphi$ from Theorem \ref{thm: 1} has the $K$-property via methods developed by Call and Thompson \cite{call2019equilibrium} based on the earlier work of Ledrappier \cite{ledrappier1977mesures}. 

\begin{defn}[$K$-property]
Let $(X,\mathcal{B},\mu)$ be a probability space, and let $f$ be a measure-preserving invertible transformation of $(X,\mathcal{B},\mu)$ 
The system has the \textit{Kolmogorov property}, or simply the \textit{$K$-property}, if there exists a sub-$\sigma$-algebra $\mathcal{K} \subset \mathcal{B}$ satisfying $\mathcal{K} \subset f\mathcal{K}$, $\bigvee\limits_{i=0}^\infty f^i\mathcal{K} = \mathcal{B}$, and $\bigcap\limits_{i=0}^\infty f^{-i}\mathcal{K} = \{\emptyset,X\}$.

A measure-preserving flow $\mathcal{F} = \{f_t\}_{t \in \R}$ on $(X,\mathcal{B},\mu)$ has the \textit{$K$-property} if for every $t \neq 0$, the time-$t$ map $f_t \colon X\to X$ has the $K$-property.
\end{defn}

The $K$-property may be viewed as a mixing property that is weaker than Bernoulli but stronger than mixing of all orders. There are many alternative characterizations for the $K$-property. For instance, the system $(X,\mathcal{B},\mu,f)$ has the $K$-property if and only if $h_{\mu}(f,\xi)$ is positive for every non-trivial (mod 0) partition $\xi$ of $X$.


In order to apply \cite{call2019equilibrium}, we need to consider the flow $\F \times \F$ on the product space $T^1M \times T^1M$. Then the potential $\Phi \colon T^1M \times T^1M \to \R$ given by $\Phi(x,y):=\vphi(x)+\vphi(y)$ satisfies $P(\Phi) = 2P(\vphi)$.
Setting $$\wt{\la}_T(x,y) := \lt(x)\cdot\lt(y)$$
and $$L:=(\Sing\times T^1M)\cup (T^1M\times \Sing) \subset T^1M\times T^1M,$$ the following lemma is the analogue of Lemma \ref{charsing} for $\wt{\la}_T$.

\begin{lem}\label{nullla}
If $\wt{\la}_T(x,y)=0$ for all $T>0$, then $(x,y)\in L $.
\end{lem}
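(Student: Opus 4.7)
The plan is to establish Lemma \ref{nullla} by contrapositive, reducing everything to the pointwise characterization of $\Sing$ already proved in Lemma \ref{charsing} together with the monotonicity of $\lt$ in $T$ from Corollary \ref{incrsl}.

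First I would suppose that $(x,y)\notin L$, i.e.\ both $x\notin\Sing$ and $y\notin\Sing$. By Lemma \ref{charsing} applied to each coordinate, there exist positive parameters $T_x,T_y>0$ such that $\lambda_{T_x}(x)>0$ and $\lambda_{T_y}(y)>0$. The goal is then to produce a single $T>0$ at which $\wt{\lambda}_T(x,y)=\lt(x)\cdot\lt(y)$ is strictly positive, contradicting the hypothesis.

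The key structural input here is Corollary \ref{incrsl}, which tells us that $T\mapsto\lt(v)$ is non-decreasing. Setting $T_0:=\max(T_x,T_y)$, monotonicity yields
\[
\lambda_{T_0}(x)\ \geq\ \lambda_{T_x}(x)\ >\ 0\qquad\text{and}\qquad \lambda_{T_0}(y)\ \geq\ \lambda_{T_y}(y)\ >\ 0,
\]
so $\wt{\lambda}_{T_0}(x,y)>0$, contradicting the standing assumption that $\wt{\lambda}_T(x,y)=0$ for all $T>0$. This finishes the contrapositive, hence the lemma.

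There is no real obstacle here; the argument is essentially a coordinatewise application of Lemma \ref{charsing}, bootstrapped by the monotonicity of $\lt$ so that a single $T$ witnesses non-vanishing in both factors simultaneously. The lemma should really be thought of as the product-flow analogue of Lemma \ref{charsing}, set up precisely so that the characterization of the singular set descends to $\F\times\F$ on $T^1M\times T^1M$; this is what will subsequently be needed when invoking the Call--Thompson framework to upgrade ergodicity of $\mu_\vphi$ to the $K$-property.
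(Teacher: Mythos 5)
Your proof is correct and is essentially the contrapositive of the paper's own argument: both rest on the coordinatewise characterization of $\Sing$ from Lemma \ref{charsing} together with the monotonicity of $\lt$ in $T$ from Corollary \ref{incrsl}. (The strict positivity $\lambda_{T_x}(x)>0$ implicitly uses $\lt\geq 0$, which holds by the monotonicity of stable/unstable Jacobi field norms, so there is no gap.)
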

\begin{proof}
Without loss of generality, we may assume there exists $T_n\to\infty$ such that $\la_{T_n}(x)=0$ for all $n$. 
By Corollary \ref{incrsl}, $\wt{\la}_T$ is non-decreasing with respect to $T$, thus $\la_T(x)=0$ for all $T>0$. By Lemma \ref{charsing}, we have $x \in \Sing$.
\end{proof}

Given any $\F$-invariant measure $\nu \in \mathcal{M}(\F)$, we say $\nu$ is \textit{almost expansive at scale $\ep$} if $\nu(\text{NE}(\ep))=0$. From the flat strip theorem \cite{OSullivan:1976cf}, it follows that $\mu_\vphi$ is almost expansive at any sufficiently small scale $\ep>0$. Moreover, the pressure gap assumption $P(\vphi)>P(\Sing,\vphi)$ implies that any equilibrium state $\nu$ of $\Phi$ is product expansive; see \cite[Proposition 5.4]{call2019equilibrium}.	
Together with these observations, \cite[Theorem 6.5]{call2019equilibrium} states that the unique equilibrium state $\mu_{\vphi}$ from Theorem
\ref{thm: 1} has the $K$-property if ${\displaystyle \sup\Big\{ P_{\mu}(\Phi):\int\wt{\lambda}_{T}d\mu=0\Big\}<P(\Phi)=2P(\varphi)}$,
which we establish in the following lemma.

\begin{prop}
For any continuous potential $\varphi: T^1M\to\mathbb{R}$ with $P(\varphi) >P(\Sing,\varphi)$, there exists $T_0>0$ such that for any $T>T_0$,
$$\sup\Big\{P_{\mu}(\Phi):\int\wt{\lambda}_Td\mu=0\Big\}<P(\Phi)=2P(\varphi).$$
\end{prop}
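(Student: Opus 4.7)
The strategy is to treat the parameter $T$ as a regularization and reduce to the ``limiting'' case in which the offending measure is supported on the product-singular set
\[
L := (\Sing \times T^1M) \cup (T^1M \times \Sing).
\]
Define $\mathcal{K}_T := \{\mu \in \mathcal{M}(\F\times\F) : \int \wt{\la}_T\, d\mu = 0\}$. Since Corollary \ref{incrsl} shows that $\lt$ is non-decreasing in $T$ and $\lt\geq 0$, the product $\wt{\la}_T$ is also non-decreasing in $T$, so the family $\{\mathcal{K}_T\}_{T>0}$ is nested: $\mathcal{K}_T \subset \mathcal{K}_{T'}$ for $T'\leq T$. Consequently the supremum in the statement is non-increasing in $T$, and it suffices to show (i) that on the intersection $\bigcap_T \mathcal{K}_T$ the supremum of $P_\mu(\Phi)$ is strictly less than $2P(\varphi)$, and (ii) that this bound transfers to all sufficiently large finite $T$ by a compactness and semi-continuity argument.

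\textbf{Characterization of} $\bigcap_T \mathcal{K}_T$ \textbf{and pressure bound.} For any $\mu \in \bigcap_T \mathcal{K}_T$ and any sequence $T_n \to \infty$, the non-negativity of $\wt{\la}_{T_n}$ forces $\lambda_{T_n}(x)\lambda_{T_n}(y) = 0$ for $\mu$-a.e.\ $(x,y)$; monotonicity of $\lt$ in $T$ then implies that at each such $(x,y)$ either $\lt(x)=0$ for all $T$ or $\lt(y)=0$ for all $T$, and Lemma \ref{charsing} forces $x\in\Sing$ or $y\in\Sing$, whence $\mu(L)=1$. The reverse inclusion is immediate since $\lt|_{\Sing}\equiv 0$. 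Now let $\mu$ be an ergodic $\F\times\F$-invariant measure with $\mu(L)=1$. The $\F\times\F$-invariant sets $\Sing\times T^1M$ and $T^1M\times \Sing$ each have $\mu$-measure in $\{0,1\}$ and their union has full measure, so, say, $\mu(\Sing\times T^1M)=1$; the marginal $\mu_1$ then lies in $\mathcal{M}(\Sing)$. Standard subadditivity of entropy on product systems gives $h_\mu(\F\times\F) \leq h_{\mu_1}(\F) + h_{\mu_2}(\F)$, and $\int \Phi\, d\mu = \int \varphi\, d\mu_1 + \int \varphi\, d\mu_2$, whence
\[
P_\mu(\Phi) \leq P_{\mu_1}(\varphi) + P_{\mu_2}(\varphi) \leq P(\Sing,\varphi) + P(\varphi) < 2P(\varphi)
\]
by the pressure gap hypothesis. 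Since $h_\mu$ and $\int\Phi\,d\mu$ are affine in $\mu$, the bound survives integration over the ergodic decomposition.

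\textbf{Transfer to finite $T$.} Suppose for contradiction that the conclusion fails, so that there exist $T_n\to\infty$ and $\mu_n \in \mathcal{K}_{T_n}$ with $P_{\mu_n}(\Phi) > 2P(\varphi) - 1/n$. By weak-$*$ compactness of $\mathcal{M}(\F\times\F)$, we may pass to a limit $\mu_n \to \mu$ along a subsequence. For any fixed $T$, the function $\wt{\la}_T$ is continuous and $\mu_n \in \mathcal{K}_T$ whenever $T_n \geq T$, so $\int \wt{\la}_T\, d\mu = 0$; hence $\mu \in \bigcap_T \mathcal{K}_T$. The entropy map of $\F\times\F$ is upper semi-continuous because entropy-expansivity \cite{liu2016entropy} is preserved under products (the bi-infinite Bowen ball of radius $\min(\varepsilon,\varepsilon)$ at $(x,y)$ for $\F\times\F$ sits inside the product of the corresponding bi-infinite Bowen balls, which have zero topological entropy). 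Combined with continuity of $\Phi$, this gives $P_\mu(\Phi) \geq \limsup P_{\mu_n}(\Phi) \geq 2P(\varphi)$, contradicting the previous paragraph.

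\textbf{Main obstacle.} The principal technical point is the characterization $\bigcap_T \mathcal{K}_T = \{\mu : \mu(L)=1\}$: this is where the monotonicity of $\lt$ in $T$ (Corollary \ref{incrsl}) and the singular-set characterization (Lemma \ref{charsing}) are both essential, and it is the analogue in our setting of the corresponding step in \cite{burns2018unique, call2019equilibrium}. Once this is in hand, the subadditivity on products decouples the two factors, the pressure gap provides the strict inequality in the limiting case, and upper semi-continuity plus compactness promotes the strict inequality from the intersection to all large finite $T$.
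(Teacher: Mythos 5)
Your proof is correct and follows essentially the same route as the paper: characterize $\bigcap_{T>0}\{\mu : \int \wt{\lambda}_T \, d\mu = 0\}$ as $\mathcal{M}(L)$ using the monotonicity of $\lambda_T$ and Lemma \ref{charsing}, bound the pressure of measures on $L$ by $P(\Sing,\varphi)+P(\varphi)$, and then pass to all sufficiently large finite $T$ by weak-$*$ compactness together with upper semi-continuity of the entropy map for the product flow. The only cosmetic difference is that you verify the bound $P_\mu(\Phi)\le P(\Sing,\varphi)+P(\varphi)$ directly via the marginals and entropy subadditivity of joinings, whereas the paper cites Lemma 2.8 and Proposition 3.3 of Call--Thompson \cite{call2019equilibrium} for the same estimate.
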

\begin{proof}

Let $\nu$ be any $\F \times \F$-invariant measure with $\displaystyle\int\wt{\lambda}_Td\nu=0$ for all $T>0$. For any $n\in\mathbb{N}$, consider $E_n:=\{(x,y)\in\Reg\times\Reg: \wt{\la}_n(x,y)\neq 0\}$ and let $E:=\bigcup\limits_{n=1}^{\infty}E_n$.  

We claim that $\Reg\times\Reg$ is equal to $E$. In fact, if there exists $(x,y)\in (\Reg\times\Reg)\backslash E$, then $\wt{\la}_n(x,y)=0$, for all $n\in\mathbb{N}$. From Corollary \ref{incrsl}  we know that $\wt{\la}_T$ is non-decreasing with respect to $T$, thus $\wt{\la}_T(x,y)=0$ for all $T>0$. This is a contradiction because by Lemma \ref{nullla}, either $x$ or $y$ would have to lie in $\Sing$.

From the choice of $\nu$, it is clear that $\nu(E_n)=0$ for all $n \in \N$. Hence, $\nu(\Reg\times\Reg)=\nu(E)=0$ and $\nu(L)=1$. Consequently,
$$\mathcal{M}(L)=\bigcap_{T>0}\mathcal{M}_{\wt{\la}_T}(0).$$
Let $\ep:=P(\vphi)-P(\Sing,\vphi)>0$ be the size of the pressure gap. 
Similar to the proof of Proposition \ref{prop: CKP7.3}, there exists $T_0>0$ such that for any $T>T_0$ and $\mu\in \mathcal{M}_{\la_T}(0)$, we have
$$P_{\mu}(\Phi)<P(L,\Phi)+\ep/2.\eqno{(I)}$$
By Lemma 2.8 and Proposition 3.3 of \cite{call2019equilibrium}, we have 
$$P(L,\Phi)\leq P(\Sing, \varphi)+P(\varphi).\eqno{(II)}$$
By combining (I) and (II), we conclude that for any $T>T_0$, 
$$\sup\Big\{P_{\mu}(\Phi):\int\wt{\lambda}_Td\mu=0\Big\}\leq P(\Sing, \varphi)+P(\varphi)+\ep/2<2P(\varphi).$$
This completes the proof.
\end{proof}
\subsection{Other properties of $\mu_{\vphi}$}
Following the same proofs of \cite[Proposition 8.1 and 8.10]{chen2018unique},
we obtain the last two remaining properties of $\mu_{\vphi}$ claimed in Theorem \ref{thm: 2}. 
\begin{prop}
\label{prop:full_measure}$\mu_{\vphi}({\rm Reg})=1$ and $\mu$ is
fully supported. 
\end{prop}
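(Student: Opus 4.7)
The plan is to split the proposition into the two claims and address each in turn.

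For $\mu_\vphi(\Reg)=1$: Since the unique equilibrium state is automatically ergodic, and $\Sing$ is a closed $\mathcal{F}$-invariant set, ergodicity forces $\mu_\vphi(\Sing)\in\{0,1\}$. If $\mu_\vphi(\Sing)=1$, then $\mu_\vphi$ would be carried by $\Sing$ and the variational principle applied to the restricted system would give
\[
P(\vphi)=h_{\mu_\vphi}(\mathcal{F})+\int\vphi\,d\mu_\vphi\leq P(\Sing,\vphi),
\]
contradicting the pressure gap hypothesis. Hence $\mu_\vphi(\Sing)=0$, equivalently $\mu_\vphi(\Reg)=1$.

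For full support: let $U\subset T^1M$ be a non-empty open set. By Proposition \ref{prop:equi_distrib}, $\mu_\vphi$ is the weak-$*$ limit of the weighted regular closed geodesics, so it suffices to show that the $\vphi$-weighted sum over regular closed geodesics of length in $(t-\Delta,t]$ that intersect $U$ is bounded below by a uniform positive multiple of $\Lambda_{\Reg,\Delta}^{*}(\vphi,t)$ for all large $t$. Fixing $T,\eta>0$ and restricting to regular closed geodesics whose tangent orbit segments lie in $\mathcal{G}_T(\eta)$ (the bulk of the weighted sum, by Proposition \ref{prop: CKP7.3}), the strategy is a surgery argument: given such a geodesic $\gamma$ of length $\approx t$, use minimality of $W^u$ (Lemma \ref{minfol}, with its uniform version Corollary \ref{cor: uniform minimality}), the connecting lemma (Lemma \ref{lem: connect}), and the Brouwer-type closing argument behind Corollary \ref{cor:strong_closing_lemma} to produce a regular closed geodesic $\gamma_U$ of length $\approx t$ that passes through $U$ and shadows $\gamma$ except on a short segment of fixed length bounded by some $\mathcal{T}(\rho)$. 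By the Bowen property of $\vphi$ on $\mathcal{G}_T(\eta)$, the ratio $e^{\Phi(\gamma_U)}/e^{\Phi(\gamma)}$ is bounded uniformly in $t$. A standard $(t,\rho)$-separation argument bounds the multiplicity of the map $\gamma\mapsto\gamma_U$, yielding the desired lower bound and hence $\mu_\vphi(U)>0$.

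The principal technical point will be to guarantee that the surgered geodesic $\gamma_U$ can be chosen inside $\mathcal{G}_{T'}(\eta')$ (possibly for slightly weakened parameters $T',\eta'$) so that the Bowen property applies to it as well, and simultaneously to control the multiplicity of distinct $\gamma$'s that give rise to the same $\gamma_U$. Both issues were settled in \cite{chen2018unique} by appealing to uniform continuity of $\lt$ on the relevant compact sets together with the local product structure of Lemma \ref{lem: local product structure}; since every ingredient used there is available in the present higher-dimensional setting with $\Reg_T(\eta)$ replacing the two-dimensional uniformity regular set of \cite{chen2018unique}, the proofs of \cite[Proposition 8.1, 8.10]{chen2018unique} transfer verbatim.
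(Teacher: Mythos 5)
Your first claim is handled correctly, and by the same argument as the source the paper cites: uniqueness gives ergodicity, $\Sing$ is closed and $\F$-invariant, so $\mu_{\vphi}(\Sing)\in\{0,1\}$, and $\mu_{\vphi}(\Sing)=1$ would give $P(\vphi)=P_{\mu_{\vphi}}(\vphi)\leq P(\Sing,\vphi)$ by the variational principle on the subsystem, contradicting the pressure gap. (The paper itself gives no details here; it simply transfers the proofs of \cite[Propositions 8.1, 8.10]{chen2018unique}.)

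The full-support half, however, has a genuine gap: the reduction you start from is insufficient. Suppose you do show that regular closed geodesics of length in $(t-\Delta,t]$ meeting $U$ carry a uniformly positive proportion of $\Lambda_{\Reg,\Delta}^{*}(\vphi,t)$. Weak-$*$ convergence only gives $\mu_{\vphi}(U)\leq\liminf_{t}\mu_{t}(U)$ for open $U$; to conclude $\mu_{\vphi}(U)>0$ you would need a uniform lower bound $\mu_{t}(C)\geq c>0$ for some closed $C\subset U$. But a geodesic of length $\approx t$ that merely passes through $C$ contributes only $\delta_{\gamma}(C)=O(1/t)$ to the normalized measure, so your hypothesis yields at best $\mu_{t}(C)\gtrsim c/t\to 0$. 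Repairing this inside your scheme would require the selected geodesics to spend a uniformly positive fraction of their time in $U$, and forcing excursions into $U$ at a positive density of times via specification surgery costs a factor $e^{-c\alpha t}$ in weight, which destroys the positive-proportion bound; so equidistribution is the wrong tool here. (Relatedly, Proposition \ref{prop: CKP7.3} does not by itself say that geodesics lying in $\mathcal{G}_T(\eta)$ carry the bulk of the weighted sum.) The proofs you invoke at the end do not run through equidistribution: following Burns--Climenhaga--Fisher--Thompson, they obtain full support from the Gibbs-type lower bound on Bowen balls of orbit segments in $\mathcal{G}_T(\eta)$ furnished by the Climenhaga--Thompson machinery, combined with the specification property and the density of $\Reg$, so that every nonempty open set contains a ball of positive $\mu_{\vphi}$-measure. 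Your argument should be replaced by, or explicitly reduced to, that Gibbs-property argument rather than the periodic-orbit limit.
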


\section{Proof of Theorem \ref{thm: 3}}\label{sec: proof for thm 3}
In this section, we prove Theorem \ref{thm: 3}.
We follow the proof of \cite[Theorem B]{burns2018unique} closely, while pointing out the necessary modifications required. 
We will make use of both functions $\l,\lt, \colon T^1M \to \R$ as well as the corresponding uniformity regular sets $\Reg(\eta)$ and $\Reg_T(\eta)$.

\subsection{Estimating singular orbits by regular orbits}

Fix a small $\eta_0>0$ such that $\Reg(\eta_0)$ has nonempty interior. 
From Corollary \ref{cor: uniform minimality}, there exists $R>0$ such that the $W^{s/u}_R(v)$ intersects $\Reg(\eta_0)$ for every $v \in T^1M$. By selecting any point from the intersection $W^{s/u}_R(v) \cap \Reg(\eta_0)$, we define maps 
$\Pi^\sigma \colon T^1M \to \Reg(\eta_0)$, $\sigma \in \{s,u\}$, so that $\Pi^\sigma(v) \in W^\sigma_R(v)$. Given $t>0$, we then define 
$\Pi_t \colon \Sing \to \Reg$ by
\begin{equation}\label{eq: Pi}
\Pi_t :=f_{-t} \circ \Pi^u \circ f_t \circ \Pi^s.
\end{equation}
\begin{prop}\cite[Theorem 8.1]{burns2018unique}\label{prop: 8.1 of bcft}
For every $\delta>0$ and $\eta \in (0,\eta_0)$, there exists $L>0$ such that for every $v \in \Sing$ and $t \geq 2L$, the image $w:= \Pi_t(v)$ has the following properties:
\begin{enumerate}
\item $w, f_tw \in\Reg(\eta)$;
\item $d_K(f_sw,\Sing) <\delta$ for all $s \in [L,t-L]$;
\item for every $s \in [L,t-L]$, $f_sw$ and $v$ lies in the same connected component of $B(\Sing,\delta):=\{w \in T^1M \colon d_K(w,\Sing)<\delta\}$.
\end{enumerate}
\end{prop}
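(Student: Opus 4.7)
The plan is to follow the proof of \cite[Theorem 8.1]{burns2018unique} closely, adapting its geometric inputs to the no-focal-points setting where only monotonicity, rather than convexity, of Jacobi fields is available.

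Unpacking the definition, set $v_1:=\Pi^s(v)\in W^s_R(v)\cap \Reg(\eta_0)$ and $v_2:=\Pi^u(f_tv_1)\in W^u_R(f_tv_1)\cap\Reg(\eta_0)$, so that $w=f_{-t}v_2$. By the backward monotonicity of $d^u$ (Remark \ref{rem: property s, cs}) we have $w\in W^u(v_1)$ with $d^u(w,v_1)\leq R$, and dually $v\in W^s(v_1)$ with $d^s(v,v_1)\leq R$. Under forward flow these bounds propagate: $d^s(f_sv_1,f_sv)\leq R$ for $s\geq 0$ and $d^u(f_sw,f_sv_1)\leq R$ for $s\in[0,t]$. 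These two inequalities will be the backbone of the entire proof.

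Property (1) splits into two parts. The equality $f_tw=v_2\in\Reg(\eta_0)\subset\Reg(\eta)$ is by definition. For $w\in\Reg(\eta)$, I use that $w$ sits on the unstable leaf of $v_1\in\Reg(\eta_0)$ within bounded intrinsic distance $R$ (hence bounded Sasaki distance), and invoke continuity of the shape operators $U^{u/s}$ (Proposition \ref{prop: horosphere}) on the compact set $\Reg(\eta_0)$ to conclude $\lambda(w)\geq\eta$ whenever $\eta<\eta_0$.

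For properties (2) and (3), the starting point is $d_K(f_sw,\Sing)\leq d_K(f_sw,f_sv)$ (using $f_sv\in\Sing$), and locally near $\Sing$ the Sasaki distance is controlled by $d^u(f_sw,f_sv_1)+d^s(f_sv_1,f_sv)$. Each summand is monotone and bounded by $R$. To push the sum below $\delta$ in the interior, I use that $v_1\in\Reg(\eta_0)$ provides $\lambda^s(v_1)\geq\eta_0$; via Lemma \ref{Jgrowthlem} and the continuity/compactness of $U^s$ on $\Reg(\eta_0)$, stable Jacobi fields along $v_1$'s orbit contract at rate at least $\eta_0/2$ on a uniform initial window of length $L_0=L_0(\eta_0)$. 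Controlling $L=L(\delta,\eta_0)$ accordingly gives $d^s(f_sv_1,f_sv)<\delta/2$ for $s\geq L$. A time-reversed argument anchored at $v_2\in\Reg(\eta_0)$ delivers $d^u(f_sw,f_sv_1)<\delta/2$ for $s\leq t-L$. Property (3) then follows because the two-parameter family $\{(f_sw,f_sv_1,f_sv):s\in[L,t-L]\}$ provides a continuous leaf-wise bridge inside $B(\Sing,\delta)$.

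The principal technical obstacle is extracting a \emph{uniform} contraction rate for stable Jacobi fields when the orbit of $v_1$ drifts out of $\Reg(\eta_0)$ in the interior of $[0,t]$. In BCFT's nonpositively curved setting, convexity of Jacobi fields makes this immediate; here only monotonicity holds (Proposition \ref{prop:no_focal_points}(7)), so the contraction can stall away from the endpoints. The idea is to localize the contraction to the uniform hyperbolic windows near $v_1$ and $v_2$ via Lemma \ref{Jgrowthlem} and compactness of $U^{u/s}$ on $\Reg(\eta_0)$, and then propagate into the interior using the characterization of $\Sing$ through $\lt$ in Lemma \ref{charsing} together with the correspondence between $\Reg(\eta)$ and $\Reg_T(\eta)$ of Lemma \ref{lem: 2} to bridge pointwise and time-integrated hyperbolicity.
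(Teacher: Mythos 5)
Your outline breaks down at the central step for properties (2)--(3), and you in fact name the obstruction yourself without resolving it. You claim that $\lambda^s(v_1)\ge\eta_0$ forces stable Jacobi fields along the orbit of $v_1$ to contract at rate $\eta_0/2$ on a window of fixed length $L_0(\eta_0)$, and that ``controlling $L$ accordingly'' yields $d^s(f_sv_1,f_sv)<\delta/2$ for $s\ge L$. A fixed contraction rate over a window of fixed length only shrinks $d^s$ by the fixed factor $e^{-\eta_0L_0/2}$; to beat an arbitrary $\delta$ starting from $d^s\le R$ you would need uniform hyperbolicity along $f_sv_1$ for a time of order $\log(R/\delta)$, and precisely this is unavailable: since $v\in\Sing$ lies on $W^s_R(v_1)$, the forward orbit of $v_1$ is stably trapped near a singular orbit and leaves $\Reg(\eta_0)$, after which only monotonicity of $d^s$ holds and the contraction can stall (this is exactly the content of Claim 1 in the paper's proof, which shows $\lambda_{T_*}(f_tv_1)\le\eta_*/2$ for all $t\ge\T_1$). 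Indeed the statement you aim for --- that $f_sw$ is metrically $\delta$-close to the singular orbit $f_sv$ in the middle of $[0,t]$ --- is neither proved nor used in the paper. The same defect appears in your argument for $w\in\Reg(\eta)$ in (1): being within the fixed, large distance $R$ of $v_1\in\Reg(\eta_0)$ gives nothing by continuity of $\lambda$; one needs $t$ large so that the backward flow contracts $W^u_R(f_tv_1)$ inside $W^u_{\ep_3}(v_1)$ (Lemma \ref{lem: 3}) before continuity can be invoked at the small scale $\ep_3$.

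The paper's mechanism is different in kind, and some substitute for it is needed to close the gap. Rather than making $f_sw$ close to $f_sv$, one proves the weaker statement that $\lambda_{T_*}(f_sw)\le\eta_*$ for $s$ in the middle window, and converts this to $d_K(f_sw,\Sing)<\delta$ via Lemma \ref{lem: 2}. This is achieved by two contradiction arguments built on Lemma \ref{lem: 3} and uniform continuity of $\lambda_{T_*}$: first, introducing an auxiliary singular point $v_0\in W^s_R(v_1)$ (the first boundary point of $\Sing$ on the stable segment from $v$ to $v_1$), if $\lambda_{T_*}(f_tv_1)$ were not small then $f_{-t}W^s_{\ep_1}(f_tv_1)\supset W^s_R(v_1)\ni v_0$, forcing $f_tv_0\in W^s_{\ep_1}(f_tv_1)$, which together with $\lambda_{T_*}(f_tv_0)=0$ contradicts the assumed largeness; second, the same scheme run forward along unstable leaves compares $f_sw$ with $f_sv_1$ and yields $\lambda_{T_*}(f_sw)\le\eta_*$ for $s\in[\T_1,t-\T_2]$. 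Your closing sentence gestures at ``propagating into the interior'' using Lemmas \ref{charsing} and \ref{lem: 2}, but supplies no mechanism for that propagation; the leaf-expansion contradiction arguments via Lemma \ref{lem: 3} are that mechanism, and without them (or an equivalent) the proof does not go through. The connectivity claim in (3) inherits the same problem, since it rests on the unproved smallness of the stable and unstable bridges.
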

\begin{figure}[H]\label{figure1}
\begin{center}\includegraphics[scale=0.5]{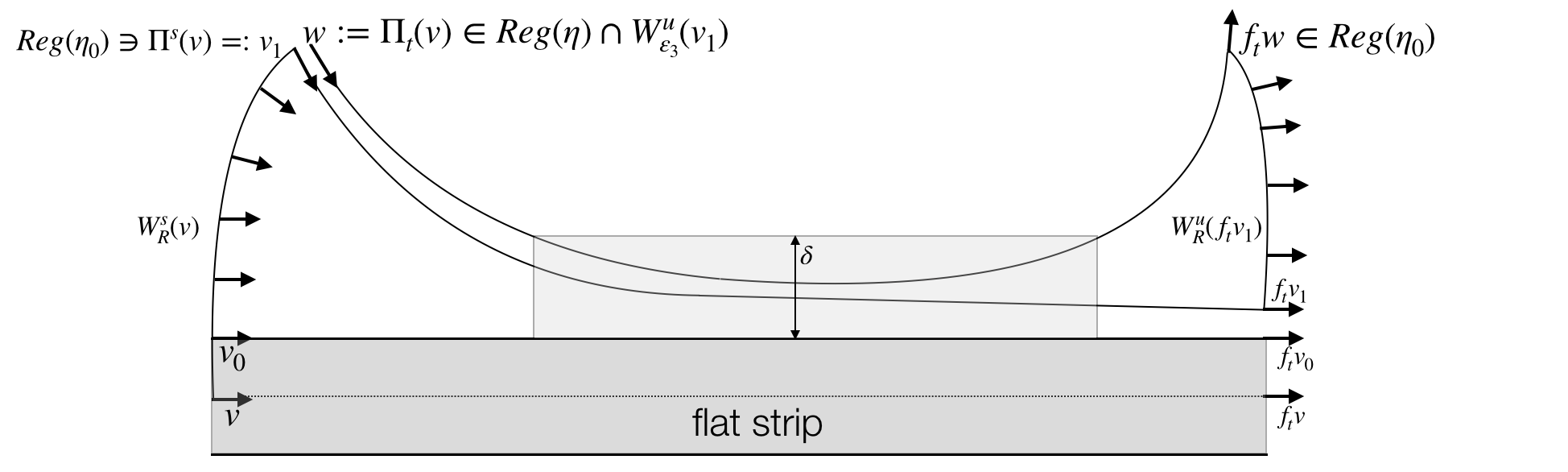}\end{center}
\caption{The approximation map $\Pi_t \colon \Sing \to \Reg$}
\end{figure}

\begin{rem}
Recall from Lemma \ref{charsing} that $\lambda_T(v) = 0$ for all $T>0$ if and only if $v \in \Sing$. However, $\lambda(f_tv)=0$ for all $t \in \R$ does not necessarily imply that $v$ belongs to $\Sing$. Hence, in order to control the distance of $f_sw$ to $\Sing$ for $s \in [L,t-L]$, we will make use of $\lambda_T$ for a suitably chosen $T$ even though $T$ does not show up as one of the parameters in the statement of the proposition.

However, unlike Theorem \ref{thm: 1} and \ref{thm: 2} where we predominatly made use of $\lt$, this proposition is the only place in the proof of Theorem \ref{thm: 3} where we make use of $\lt$; for rest of the proof appearing in Subsection \ref{subsec: proof of thm C}, we will primarily be making use of $\lambda$ instead.
\end{rem}

\begin{proof}[Proof of Theorem \ref{prop: 8.1 of bcft}]
Let $\delta>0$ and $\eta \in (0,\eta_0)$ be given.
For any $v\in \Sing$, let $v_1:=\Pi^s(v) \in \Reg(\eta_0) \cap W^s_R(v)$ be a point obtained from $\Pi^s$. Connect $v$ to $v_1$ by a directed line segment lying in $W^s(v)$, and denote the first intersection of the line segment with the boundary of $\Sing$ by $v_0$. Since $v_0 \in \Sing$, for any $T >0$ and $t \in \R$ we have $\lambda_T
(f_tv_0)=0$. Throughout the proof, refer to Figure \ref{figure1}. Note that Figure \ref{figure1} is a slight modification of \cite[Figure 8.1]{burns2018unique} in order to account for different notations. 

We begin by choosing $T_*,\eta_*>0$ from Lemma \ref{lem: 2} such that $$\lambda_{T_*}(v) \leq \eta_*\implies d_K(v,\Sing)<\delta.$$

From the uniform continuity of $\lambda_{T_*}>0$, there exists $\ep_1>0$ such that for any $a,b \in T^1M$,
\begin{equation}\label{eq: 1}
 \lambda_{T_*}(a) < \eta_*/3 \text{ and } b \in B(a,\ep_1) \implies \lambda_{T_*}(b) <\eta_*/2. 
\end{equation} 
From Lemma \ref{lem: 3}, there exists $\T_1>0$ such that for any $a \in \Reg_{T_*}(\eta_*/2)$ and any $t \geq \T_1$, we have $f_{-t}W^s_{\ep_1}(a) \supset W^s_R(f_{-t}a)$.\\\\
\textbf{Claim 1}: for any $t \geq \T_1$, we have $\lambda_{T_*}(f_tv_1) \leq \eta_*/2$.
\begin{proof}[Proof of Claim 1]
Suppose the claim does not hold; that is, $\lambda_{T_*}(f_tv_1) > \eta_*/2$ for some $t \geq \T_1$. Since $\lambda_{T_*}(f_tv_0) = 0$, we must have $d(f_tv_1, f_tv_0) >\ep_1$ from \eqref{eq: 1}. In particular, $W^s_{\ep_1}(f_tv_1)$ does not contain $f_tv_0$. 

On the other hand, since $t \geq \T_1$ and $\lambda_{T_*}(f_tv_1) >\eta_*/2$, $f_{-t}W^s_{\ep_1}(f_tv_1)$ contains $W^s_R(v_1)$. In particular, $f_{-t}W^s_{\ep_1}(f_tv_1)$ must contain $v_0$ because $v_0 \in W^s_R(v_1)$. However, this implies that $W^s_{\ep_1}(f_tv_1)$ contains $f_t(v_0)$, and this is a contradiction to the previous paragraph.
\end{proof}

Using uniform continuity of $\lambda_{T_*}$, we choose $\ep_2>0$ such that for any $a,b \in T^1M$,
\begin{equation}\label{eq: 2}
 \lambda_{T_*}(a) < \eta_*/2 \text{ and } b \in B(a,\ep_2) \implies \lambda_{T_*}(b) <\eta_*. 
\end{equation}
From Lemma \ref{lem: 3}, there exists $\T_2>0$ such that for any $t \geq \T_2$ and any $a \in \Reg_{T_*}(\eta_*)$, we have $f_tW^u_{\ep_2}(a)  \supset W^u_R(f_ta)$. 
Recalling that $w:=\Pi_t(v)$, we have the following claim whose proof is similar to the proof of Claim 1.\\\\
\textbf{Claim 2}: Suppose $t>\T_1+\T_2$ and $s \in [\T_1,t-\T_2]$. Then, $d_K(f_sw,\Sing)<\delta$.
\begin{proof}[Proof of Claim 2]
From the choice of $T_*,\eta_*>0$, it suffices to show that $\lambda_{T_*}(f_sw) \leq \eta_*$ for all $s \in [\T_1,t-\T_2]$.  

Suppose for the contrary that $\lambda_{T_*}(f_{s_0}w) >\eta_*$ for some for $s_0 \in [\T_1,t-\T_2]$. Since $\lambda_{T_*}(f_sv_1) \leq \eta_*/2$ for any $s \geq \T_1$, we have $d_K(f_{s_0}w,f_{s_0}v_1)>\ep_2$ from \eqref{eq: 2}. In particular, $W^u_{\ep_2}(f_{s_0}w)$ does not contain $f_{s_0}v_1$. 

On the other hand, since $t-s_0 \geq \T_2$ and $\lambda_{T_*}(f_{s_0}w) >\eta_*$, it follows that $f_{t-s_0}W^u_{\ep_2}(f_{s_0}w)$ contains $W^u_R(f_tw)$ which then contains $f_tv_1$ from the definition \eqref{eq: Pi} of $\Pi_t$. However, this is a contradiction to the previous paragraph that $f_{s_0}v_1$ does not belong to $W^u_{\ep_2}(f_{s_0}w)$.
\end{proof}	

Lastly, from the uniform continuity of $\lambda$, we choose a small $\ep_3>0$ such that for any $a,b\in T^1M$,
$$
\lambda(a) \geq \eta_0 \text{ and }b\in B(a,\ep_3) \implies \lambda(b) \geq \eta.
$$
Then by taking the compact subset $X$ to be $\Reg(\eta_0)$ from Lemma \ref{lem: 3}, we obtain $\T_3>0$ such that $f_tW^u_{\ep_3}(v_1)$ contains $W^u_R(f_tv_1)$ for any $t \geq \T_3$. From such choices of $\ep_3, \T_3$ and the fact that $v_1 \in \Reg(\eta_0)$, we ensure that $w:=\Pi_t(v) \in \Reg(\eta)$ whenever $t \geq \T_3$.
Putting everything together, the proposition follows by setting $L := \max\{\T_3/2,\max(\T_1,\T_2)\}$. 
\end{proof}

The following proposition shows that given any $(t,2\ep)$-separate subset $E$ of $\Sing$, the cardinality of the intersection between $\Pi_t(E)$ and any $(t,\ep)$-Bowen ball is uniformly bounded. The proof mostly follows that of \cite[Proposition 8.2]{burns2018unique} up until the end where small modification has to be made in order to account for the slightly weaker geometric features of manifolds without focal points.
\begin{prop}\cite[Proposition 8.2]{burns2018unique}\label{prop: 8.2 of BCFT}
For every $\ep>0$, there exists $C>0$ such that if $E_t \subset \Sing$ is a $(t,2\ep)$-separated set for some $t>0$, then for every $w \in T^1M$, we have $\#\{v \in E_t \colon d_t(w,\Pi_tv)<\ep\} \leq C$.
\end{prop}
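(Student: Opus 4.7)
The strategy, following \cite[Proposition 8.2]{burns2018unique}, is to show that $\Pi_t$ is ``quasi-injective'' on $E_t$. The plan is first to establish a uniform bound on $d_t(v, \Pi_t v)$ independent of $v \in \Sing$ and $t > 0$, which will transfer clustering of the images $\Pi_t v$ in the Bowen ball $B_t(w, \ep)$ to clustering of the preimages $v$; then the $(t, 2\ep)$-separation of $E_t$ together with the rigidity of singular orbits will bound the count.

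For the uniform bound, write $v_s := \Pi^s(v) \in W^s_R(v)$, $v_1 := f_t v_s$, and $v_2 := \Pi^u(v_1) \in W^u_R(v_1)$, so that $\Pi_t v = f_{-t} v_2$. Using $d^s(v, v_s) \leq R$ and $d^u(v_1, v_2) \leq R$, combined with the non-increasing properties of $d^s$ in forward time and $d^u$ in backward time (Remark \ref{rem: property s, cs}), one obtains for all $\tau \in [0, t]$:
\[
d(f_\tau v, f_\tau \Pi_t v) \leq d^s(f_\tau v, f_\tau v_s) + d^u(f_\tau v_s, f_\tau \Pi_t v) \leq 2R,
\]
where the second summand uses that $f_\tau v_s$ and $f_\tau \Pi_t v = f_{\tau - t} v_2$ lie on the common unstable leaf of $f_{\tau - t} v_1 = f_\tau v_s$, and $\tau - t \leq 0$. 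Hence $d_t(v, \Pi_t v) \leq 2R$ uniformly.

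Now if $v, v' \in E_t$ both satisfy $d_t(w, \Pi_t v) < \ep$ and $d_t(w, \Pi_t v') < \ep$, the triangle inequality yields $d_t(v, v') \leq 4R + 2\ep$, so the set to count is a $(t, 2\ep)$-separated subset of $\Sing$ contained in a single $d_t$-ball of radius $4R + 2\ep$. To extract the uniform bound $C$, we invoke the Flat Strip Theorem (Proposition \ref{prop:no_focal_points}(6)): singular geodesics whose lifts to the universal cover remain within bounded distance must bound a flat strip, and within such a flat strip the foliation by parallel geodesics restricts the number of $(t, 2\ep)$-separated orbits to a quantity controlled by the strip width divided by $\ep$, independently of $t$. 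Compactness of $M$ bounds all flat strip widths uniformly, producing $C$.

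The main obstacle, and the source of the ``small modification'' from \cite{burns2018unique}, lies in this last step. The Flat Strip Theorem is an asymptotic statement about bi-asymptotic geodesics, whereas our hypothesis only supplies bounded shadowing on the finite interval $[0, t]$. In the nonpositively curved setting of \cite{burns2018unique}, the convexity of $\|J(t)\|$ promotes finite-time shadowing to a quantitative control sufficient for the counting. In the absence of focal points, convexity is replaced by the weaker monotonicity of Proposition \ref{prop:no_focal_points}(7); the fix is to combine this monotonicity with a Dini-type compactness argument in the spirit of Lemma \ref{lem: 3}, extracting a uniform constant depending only on $\ep$ and the geometry of $M$, not on $t$.
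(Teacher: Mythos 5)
Your first two steps are sound: the bound $d_t(v,\Pi_t v)\leq 2R$ (via the monotonicity of $d^s$ forward and $d^u$ backward, as in Remark \ref{rem: property s, cs}) and the resulting observation that all candidate $v$'s lie in a single $d_t$-ball of radius roughly $4R+2\ep$ are correct, and they parallel the endpoint estimates \eqref{eq: 01}--\eqref{eq: 02} in the paper. The gap is in the counting step, which is where all the real work lies. In general dynamics the number of $(t,2\ep)$-separated points inside a Bowen ball of a \emph{larger} fixed radius can grow (even exponentially) with $t$, so a uniform-in-$t$ bound requires a genuine finite-time rigidity input; your proposal does not supply one. The Flat Strip Theorem cannot play that role: it applies to geodesics that are bi-asymptotic, i.e.\ at bounded distance for \emph{all} $t\in\R$, whereas here you only know closeness on $[0,t]$; moreover two distinct singular vectors that shadow each other on a finite window need not bound a flat strip at all, the set $E_t$ is not contained in a single strip, and in higher dimensions $\Sing$ is not a finite union of two-dimensional flat strips. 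The closing appeal to a ``Dini-type compactness argument'' is a gesture, not an argument, and it is precisely at this point that the proof would have to be made.

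What is actually needed (and what the paper does) is a coding argument in the universal cover: lift everything to $T^1\wt{M}$, note that the footpoints $\pi\wt{v}_B$ and $\pi f_t\wt{v}_B$ of each relevant $v$ land in fixed compact sets (a fundamental domain, respectively its $2R+\ep$-thickening translated by one of finitely many deck transformations), code $v$ by nearest points in finite $\ep/2$-dense subsets $X$, $Y$ of these compact sets, and prove the coding is injective. Injectivity uses the finite-time estimate for manifolds without focal points, $\wt{d}(\gamma_{\wt{v}_1}(s),\gamma_{\wt{v}_2}(s))\leq \wt{d}(\gamma_{\wt{v}_1}(0),\gamma_{\wt{v}_2}(0))+\wt{d}(\gamma_{\wt{v}_1}(t),\gamma_{\wt{v}_2}(t))$ for $s\in[0,t]$ (\cite[Proposition 2.8]{katok1982entropy}), which substitutes for the convexity of $\rho$ available in the nonpositively curved case of \cite{burns2018unique}: if two elements of $E_t$ receive the same code, their orbits stay $2\ep$-close on $[0,t]$, contradicting $(t,2\ep)$-separation. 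This yields $C=(\#\Gamma)(\#X)(\#Y)$ independent of $t$. Your proposal is missing exactly this mechanism for converting endpoint proximity into proximity along the whole orbit segment, and no variant of the flat strip argument replaces it.
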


\begin{proof}
Let $\wt{M}$ be the universal cover of $M$ and $B \subset \wt{M}$ a fundamental domain. Define $\wt{\Pi}^{s,u}$ and $\wt{\Pi}_t$ by lifting $\Pi^{s,u}$ and $\Pi_t$ to the universal cover. Let $\wt{d}$ be the lift of $d$ to $\wt{M}$, and $\wt{d}_t$ be the lift of $d_t$ to $T^1\wt{M}$. For every $\wt{v}\in T^1\wt{M}$, we have
\begin{equation}\label{eq: 01}
\wt{d}(\pi \wt{v},\pi \wt{\Pi}_t\wt{v}) \leq \wt{d}(\pi \wt{v},\pi \wt{\Pi}^s\wt{v})+\wt{d}(\pi\wt{\Pi}^s\wt{v},\pi\wt{\Pi}_t\wt{v}) \leq d^s(v,\Pi^sv)+d^u(\Pi^sv,\Pi_t v) \leq 2R,
\end{equation}
where $\pi$ is the projection from $T^1\wt{M}$ onto $\wt{M}$. By the analogous calculation, we have 
\begin{equation}\label{eq: 02}
\wt{d}(\pi f_t\wt{v},\pi f_t(\wt{\Pi}_t\wt{v}))<2R.
\end{equation}
Given $v \in T^1M$, let $\wt{v}_B \in T^1\wt{M}$ be the lift of $v$ such that $\pi \wt{v}_B \in B$. Then \eqref{eq: 01} gives $\displaystyle \pi\wt{\Pi}_t\wt{v}_B \in A_{2R}:= \bigcup_{x\in B} B_{\wt{d}}(x,2R)$.

Fix $\ep>0$ and let $\Gamma = \Gamma_{2R+\ep}:=\{g \in \pi_1(M) \colon gB \cap A_{2R+\ep} \neq \emptyset\}$. Note $\# \Gamma <\infty$ as $\overline{B}$ is compact. For $t >0$, let $E_t \subset \Sing$ be any $(t,2\ep)$-separated set, and fix an arbitrary $w \in T^1M$. We define
$$E^{w,\ep}_t:= \{v \in E_t \colon d_t(w,\Pi_t v)<\ep\}.$$

Since $d_t(w,\Pi_tv)<\ep$, there exists a lift $\wt{w}$ of $w$ such that $\wt{d}_t(\wt{w},\wt{\Pi}_t\wt{v}_B)<\ep$. As $\pi \wt{\Pi}_t\wt{v}_B \in A_{2R}$, it follows that $\wt{w} \in A_{2R+\ep}$, and thus $\pi \wt{w} \in gB$ for some $g\in \Gamma$. Thus,
$E^{w,\ep}_t = \bigcup\limits_{g \in \Gamma}E^g_t,$ where
$$E^g_t := \{v \in E^{w,\ep}_t \colon \wt{d}_t(\wt{w}_g,\wt{\Pi}_t\wt{v})<\ep \text{ where } \wt{w}_g \text{ is the lift of } w \text{ in }gB\}.$$

Let $X \subset B$ and $Y \subset A_{2R+\ep}$ be finite $\ep/2$-dense sets. We will show that $\# E^{w,\ep}_t \leq  (\#\Gamma)(\# X) (\#Y)$. 

For any $g \in \Gamma$ and $v \in E^g_t$, we approximate $\wt{v}_B$ and $f_t\wt{v}_B$ using $X$ and $Y$. Since $\pi\wt{v} \in B$, there exists $x=x(v) \in X$ such that $\wt{d}(x,\pi\wt{v}_B)<\ep/2$. For $f_t\wt{v}_B$, we use $f_t\wt{w}$ as the reference point. There exists a unique $h \in \pi_1(M)$ such that $\pi f_t\wt{w} \in hB$. Since $\wt{d}_t(\wt{w},\wt{\Pi}_t\wt{v}_B)<\ep$, we have $\wt{d}(hB,\pi f_t(\wt{\Pi}_t\wt{v}_B))<\ep$. This implies that $\pi f_t \wt{v}_B \in h(A_{2R+\ep})$ because $\wt{d}(\pi f_t\wt{v}_B,\pi f_t(\wt{\Pi}_t\wt{v}_B))<2R$ from \eqref{eq: 02}. In particular, there exists some $y=y(v) \in Y$ such that $\wt{d}(\pi f_t\wt{v}_B,h(y))<\ep/2$.

Now we show that the map $x \times y \colon E^g_t \to X \times Y$ is injective. Given any $v_1,v_2 \in E^g_t$ with $(x\times y )(v_1) = (x\times y )(v_2)$, define $\rho(s):=\widetilde{d}(\gamma_{\widetilde{v}_1}(s),\gamma_{\widetilde{v}_2}(s))$ for $s \in [0,t]$. From $x(v_1) = x(v_2)=:x_0$, we have $\rho(0)=\widetilde{d}(\gamma_{\widetilde{v}_1}(0),\gamma_{\widetilde{v}_2}(0)) < \ep/2 +\ep/2 =\ep$ by applying the triangle inequality pivoted at $\gamma_{\widetilde{x}_0}(0)$. Similarly, $\rho(t)<\ep$ from $y(v_1) = y(v_2)$. While \cite{burns2018unique} uses the convexity of the function $\rho$, a property coming from the geometry of nonpositively curved manifolds, to conclude that $\rho$ attains its maximum value at an endpoint, such property of $\rho$ is not available for manifolds without focal points. Instead, we use \cite[Proposition 2.8]{katok1982entropy} to conclude that
$$\rho(s) \leq \rho(0)+\rho(t) <2\ep \text{ for all }s \in [0,t],$$
and hence, $d_t(v_1,v_2) \leq 2\ep$. Since $E_t^g$ is $(t,2\ep)$-separated, this gives $v_1 = v_2$. Injectivity shows that $\#E^g_t \leq (\# X) (\#Y)$ for each $g \in \Gamma$, and this proves the proposition with $C := (\#\Gamma)(\# X) (\#Y)$.
\end{proof}

\subsection{Proof of Theorem \ref{thm: 3}}\label{subsec: proof of thm C}

With Proposition \ref{prop: 8.1 of bcft} and \ref{prop: 8.2 of BCFT}, the rest of the proof can be completed as in \cite{burns2018unique}.
For completeness, we provide a brief sketch of the proof by closely following \cite{burns2018unique}: we will sketch the existence of constants $\tau,Q$ and $\T$ independent of $\alpha$ such that 
\begin{equation}\label{eq: final}
P(\vphi) \geq -\frac{\alpha}{\tau}\log \alpha - \frac{\alpha Q}{\tau}+\Big(1 - \frac{\alpha \T}{\tau}\Big)P(\Sing,\vphi).
\end{equation}
Then by choosing $\alpha \in (0,e^{-Q-\T P(\Sing,\vphi)})$, we will obtain the required pressure gap $P(\vphi) >P(\Sing, \vphi)$. Note the inequality \eqref{eq: final} is slightly different from its corresponding inequality \cite[(8.16)]{burns2018unique} due to a small error in \cite[Proposition 8.7]{burns2018unique}. However, such error that can be easily amended; see \eqref{eq: 10}.

We begin with the following general lemma from \cite{climenhaga2016unique}.
\begin{lem}\label{lem: from CT}
Let $(X,\F)$ be a continuous flow on a compact metric space, let $\vphi \colon X \to \R$ be continuous, and let $\ep>0$. Then for all $t >0$,
\begin{equation}\label{eq: 4}
\sup \Big\{ \sum\limits_{x\in E} \exp\Big({\sup\limits_{y \in B_t(x,\ep)} \Phi(y,t)} \Big) \colon E \subset X\text{ is }(t,\ep)\text{-separated} \Big\} \geq e^{tP(X,2\ep,\vphi)}.
\end{equation}
\end{lem}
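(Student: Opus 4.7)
The plan is a standard two-step comparison between $(t,\ep)$- and $(t,2\ep)$-separated sets. First, I would establish the intermediate inequality
\begin{equation*}
\sup\Bigl\{\sum_{x \in E} \exp\bigl(\sup_{y \in B_t(x,\ep)} \Phi(y,t)\bigr) : E \text{ is }(t,\ep)\text{-separated}\Bigr\} \geq \Lambda(X,\vphi,2\ep,t),
\end{equation*}
and then invoke the definition of the scale-$2\ep$ pressure $P(X,2\ep,\vphi) = \limsup_{t\to\infty}\frac{1}{t}\log\Lambda(X,\vphi,2\ep,t)$ to pass from the right-hand side to $e^{tP(X,2\ep,\vphi)}$. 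The only geometric input required is the standard observation that a \emph{maximal} $(t,\ep)$-separated set $E \subset X$ has the property that the open Bowen balls $\{B_t(x,\ep)\}_{x \in E}$ cover $X$: otherwise an uncovered point could be adjoined to $E$ to produce a larger $(t,\ep)$-separated set.

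For the main inequality, given any $(t,2\ep)$-separated set $F \subset X$ I would pick, for each $z \in F$, some $\phi(z) \in E$ with $z \in B_t(\phi(z),\ep)$. The triangle inequality for $d_t$ makes $\phi \colon F \to E$ injective: if $\phi(z_1) = \phi(z_2) = x$ for distinct $z_1, z_2 \in F$, then $d_t(z_1, z_2) \leq d_t(z_1, x) + d_t(x, z_2) < 2\ep$, contradicting $(t, 2\ep)$-separation. Combining this with the pointwise bound $\Phi(z, t) \leq \sup_{y \in B_t(\phi(z), \ep)} \Phi(y, t)$ and injectivity of $\phi$ yields
\begin{equation*}
\sum_{z \in F} e^{\Phi(z,t)} \leq \sum_{x \in \phi(F)} \exp\bigl(\sup_{y \in B_t(x,\ep)} \Phi(y,t)\bigr) \leq \sum_{x \in E} \exp\bigl(\sup_{y \in B_t(x,\ep)} \Phi(y,t)\bigr),
\end{equation*}
and taking the supremum over $F$ on the left produces the displayed intermediate inequality above.

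I do not anticipate a genuine obstacle in this argument; the two subtleties to be mindful of are (i) taking $E$ to be \emph{maximal} rather than arbitrary, so that its Bowen balls actually cover $X$ and the selection map $\phi$ is well-defined, and (ii) using strict inequalities in the definition of the Bowen ball so that the triangle inequality above yields the strict bound $d_t(z_1, z_2) < 2\ep$ needed to contradict $(t,2\ep)$-separation.
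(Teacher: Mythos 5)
Your first step is fine: taking $E$ to be a maximal $(t,\ep)$-separated set (hence $(t,\ep)$-spanning) and using the triangle inequality does give $\sum_{x\in E}\exp\big(\sup_{y\in B_t(x,\ep)}\Phi(y,t)\big)\geq \Lambda(X,\vphi,2\ep,t)$ for each fixed $t$. The gap is in your second step: you cannot ``invoke the definition of the scale-$2\ep$ pressure'' to replace $\Lambda(X,\vphi,2\ep,t)$ by $e^{tP(X,2\ep,\vphi)}$ at a \emph{fixed} time $t$. Since $P(X,2\ep,\vphi)=\limsup_{s\to\infty}\frac1s\log\Lambda(X,\vphi,2\ep,s)$, the definition only produces times $s_k\to\infty$ along which the partition sum is roughly $e^{s_kP(X,2\ep,\vphi)}$; for an individual $t$ there is no a priori supermultiplicativity of $t\mapsto\Lambda(X,\vphi,2\ep,t)$, and the fixed-scale partition sum can well be smaller than $e^{tP(X,2\ep,\vphi)}$ (think of a system whose orbits only begin to separate at scale $2\ep$ after a long delay, so that $\Lambda(X,\vphi,2\ep,t)$ behaves like $e^{(t-T_0)P}$). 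What your argument actually proves is the asymptotic inequality $\limsup_{t\to\infty}\frac1t\log(\text{LHS of \eqref{eq: 4}})\geq P(X,2\ep,\vphi)$, which is strictly weaker than \eqref{eq: 4} and not enough for how the lemma is used in Subsection \ref{subsec: proof of thm C}, where \eqref{eq: 5} must hold at each of the finite times $t_i$ used to build the separated sets $E_{t_i}$.

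The missing idea --- this is how the statement is proved in \cite{climenhaga2016unique}, which the present paper cites rather than reproving --- is to run your comparison over concatenations of time-$t$ blocks rather than a single block. Fix $t$ and a maximal $(t,\ep)$-separated set $E$. For $s$ large, first note $\Lambda(X,\vphi,2\ep,s)\leq e^{t\|\vphi\|}\,\Lambda(X,\vphi,2\ep,nt)$ with $n=\lceil s/t\rceil$, since any $(s,2\ep)$-separated set is $(nt,2\ep)$-separated. Then map each point $z$ of an $(nt,2\ep)$-separated set to the $n$-tuple $(x_1,\dots,x_n)\in E^n$ with $f_{(i-1)t}z\in B_t(x_i,\ep)$; blockwise triangle inequalities show this map is injective, and $\Phi(z,nt)\leq\sum_{i=1}^n\sup_{y\in B_t(x_i,\ep)}\Phi(y,t)$, so that $\Lambda(X,\vphi,2\ep,nt)\leq\big(\sum_{x\in E}\exp(\sup_{y\in B_t(x,\ep)}\Phi(y,t))\big)^n$. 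Taking $\frac1s\log$ and letting $s\to\infty$ (so $n/s\to 1/t$) yields $P(X,2\ep,\vphi)\leq\frac1t\log\sum_{x\in E}\exp\big(\sup_{y\in B_t(x,\ep)}\Phi(y,t)\big)$, which is \eqref{eq: 4} for the given $t$. Note that it is precisely the weights $\sup_{y\in B_t(x,\ep)}\Phi(y,t)$, rather than $\Phi(x,t)$, that allow the blockwise estimate to close up; a single-block comparison, as in your proposal, cannot control the asymptotic quantity appearing on the right-hand side.
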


Under the assumption of Theorem \ref{thm: 3} that $\vphi$ is locally constant on a neighborhood of $\Sing$, for any sufficiently small $\ep>0$ and any $x \in \Sing$ and $t>0$, the value of $\Phi(y,t)$ is constant for any $y \in B_t(x,\ep)$. In particular, the left hand side of \eqref{eq: 4} is equal to $\Lambda(\Sing,\vphi, \ep,t)$ when applied to $X = \Sing$. From the entropy-expansivity of the geodesic flows on manifolds with no focal points \cite{liu2016entropy}, we have $P(\Sing,\vphi) = P(\Sing,2\ep, \vphi)$ for any sufficiently small $\ep$. Thus, for sufficiently small $\ep>0$, Lemma \ref{lem: from CT} gives
\begin{equation}\label{eq: 5}
\Lambda(\Sing,\vphi,\ep,t) \geq e^{tP(\Sing,\vphi)}.
\end{equation}
We now choose the constants from Proposition \ref{prop: 8.1 of bcft}. Recall that we fixed $\eta_0$ early in the process of defining the map $\Pi_t$. Now fix $\eta \in (0,\eta_0)$ and $\delta>0$ sufficiently small such that
\begin{enumerate}[label=(\roman*)]
\item\label{eq: i} $\vphi$ is locally constant on $B(\Sing,2\delta)$, and
\item\label{eq: ii} $d(v,\Sing)<2\delta \implies \lambda(v)<\eta$.
\end{enumerate}
Since $\l|_\Sing \equiv 0$, such choices of $\eta,\delta$ can be made by first choosing $\delta>0$ and $\eta \in (0,\eta_0)$ that satisfy \ref{eq: ii} using the uniform continuity of $\lambda$ and then by decreasing $\delta$ to satisfy \ref{eq: i} if necessary. 

Denoting the components of $\Sing$ by $U_1,\ldots,U_k$, and the constant value of $\vphi$ on $U_i$ by $\Phi_i$, \ref{eq: i} and \eqref{eq: 5} together produce a $(t,2\delta)$-separated set $E_t \subset \Sing$ such that
\begin{equation}\label{eq: 6}
\sum\limits_{i=1}^k e^{t\Phi_i} \# (E_t \cap U_i) \geq  e^{t P(\Sing,\vphi)}.
\end{equation}
Let $L=L(\delta, \eta)$ be from Proposition \ref{prop: 8.1 of bcft}. From the third statement of Proposition \ref{prop: 8.1 of bcft}, for any $t \geq 2L$, any $v \in E_t \cap U_i$, and any $s \in [L,t-L]$, we have $d(f_s\Pi_tv,U_i)<\delta$. From \ref{eq: i}, for any $u \in B_t(\Pi_tv,\delta)$, we have
\begin{equation}\label{eq: 7}
\int_0^t \vphi(f_su)ds \geq (t-2L)\Phi_i - 2L\|\vphi\|\geq  t\Phi_i -4L\|\vphi\|.
\end{equation}

For each $1 \leq i \leq k$, let $E_{t,i}':=\Pi_t(E_t \cap U_i)$ and $E_{t,i}''$ be a maximal $(t,\delta)$-separated subset of $E_{t,i}'$. From Proposition \ref{prop: 8.2 of BCFT}, there exists $C>0$ such that for any $w \in T^1M$, the cardinality of the intersection $E_{t,i}' \cap B_t(w,\delta)$ is bounded above by $C$. Since $E_{t,i}''$ is $(t,\delta)$-spanning in $E_{t,i}'$, by varying $w$ over the elements of $E_{t,i}''$ we have $\# E_{t,i}'' \geq C^{-1}\# E_{t,i}'$. 

To summarize the construction so far, given any $t \geq 2L$, we construct a $(t,\delta)$-separated set $E_{t}'' := \bigcup\limits_{i=1}^k E_{t,i}'' \subset E_t'$ such that for $\beta:= C^{-1}e^{-4L\|\vphi\|}$, $E_{t}''$ satisfies
\begin{equation}\label{eq: 8}
\sum\limits_{w \in E_t''} e^{\inf_{u \in B_t(w,\delta)}\int_0^t \vphi(f_su)du } \geq \beta e^{tP(\Sing,\vphi)}
\end{equation}
from \eqref{eq: 6} and \eqref{eq: 7}; see \cite[Lemma 8.4]{burns2018unique} for details.

Since $\Reg(\eta)$ is a compact subset of $\Reg$, from Lemma \ref{charsing} there exist $T_*,\eta_*>0$ such that $\Reg(\eta) \subset \Reg_{T_*}(\eta_*)$. 
Let $\T>0$ be the transition time for the specification property obtained from Proposition \ref{prop: specification} applied to $\cal{C}_{T_*}(\eta_*)$ at scale $\rho = \delta/3$; see Definition \ref{defn: specification}. Then for any given number of orbit segments $\{(v_i,t_i)\}_{i=1}^n$ with $v_i,f_{t_i}v_i \in \Reg(\eta)$ and any $\{T_i\}_{i=1}^n$ with $T_1=0$ and $T_{i}-T_{i-1} \geq t_i+\T$, there exists $w \in T^1M$ such that for each $1\leq i \leq n$,
$$f_{T_i}(w) \in B_{t_i}(v_i,\delta/3)).$$

Let $\alpha\in (0,1)$ be small and $N \in \N$ such that $\alpha N \in \N$. Set $\tau := 2L+\T$  and consider 
$$\mathcal{A} = \{\tau,2\tau,\ldots,(N-1)\tau\} \subset [0,N\tau].$$
Let $\mathbb{J}_N^\alpha:=\{J \subset \mathcal{A} \colon \#J = \alpha N-1\}$ be the set of all $(\alpha N-1)$-subsets of $\mathcal{A}$. 
We call each element of $\in \mathbb{J}^\alpha_N$ an \textit{itinerary}. 

For any given itinerary $$J=\{(N_1 \tau,\ldots,N_{\alpha N-1}\tau) \colon 1\leq N_1<\ldots <N_{\alpha N-1} \leq N-1\},$$ let 
$t_i = (N_i - N_{i-1})\tau - \T$ for $1 \leq i \leq \alpha N$ where $N_0 = 0$ and $N_{\alpha N} = N$. As $t_i \geq \tau-\T \geq  2L$, there exists a $(t_i,\delta)$-separated set $E_{t_i}''$ satisfying \eqref{eq: 8}. Note that the endpoints $v$ and $f_{t_i}v$ of any orbit segment $(v,t_{i})$ with $v \in E_{t_i}''$ belong to $\Reg(\eta)$ from Proposition \ref{prop: 8.1 of bcft}. Hence, for any $\textbf{v}:=(v_1,\ldots,v_{\alpha N}) \in \prod\limits_{i=1}^{\alpha N} E_{t_i}''$, there exists $G(\textbf{v}) \in T^1M$ from the specification property such that  
\begin{equation}\label{eq: 9}
f_{N_{i-1}\tau}G(\textbf{v}) \in B_{t_i}(v_i,\delta/3) \text{ for each }1 \leq i \leq \alpha N.
\end{equation}
It is shown in \cite[Lemma 8.5]{burns2018unique} that the image $\chi_J$ of $G \colon \prod\limits_{i=1}^{\alpha N} E_{t_i}'' \to T^1M$ is $(N\tau,\delta/3)$-separated. From \cite[Lemma 8.6]{burns2018unique}, Proposition \ref{prop: 8.1 of bcft} and \ref{eq: ii} imply that for any $w \in \chi_J$, we have
\begin{align}\label{eq: 8.5}
\begin{split}
d(f_tw,\Sing) &> 5\delta/3 \text{ for all } t\in J,\\ 
d(f_tw,\Sing) &< 4\delta/3 \text{ for all } t \in \mathcal{A} \setminus J.
\end{split}
\end{align}
Moreover, for any $G(\mathbf{v})$ with $\mathbf{v}=(v_1,\ldots,v_{\alpha N})$, the integral $\int_{N_{i-1}\tau}^{N_{i-1}\tau+t_{i}} \vphi(f_sw)ds$ is bounded below by $\inf_{u \in B_{t_i}(v_i,\delta /3)} \int _0^{t_i} \vphi(f_su)ds$ from \eqref{eq: 9} and $\int^{N_{i}\tau}_{N_{i-1}\tau+t_{i}} \vphi(f_sw)ds$ is trivially bounded below by $-\T\|\vphi\|$. Summing $\int_0^{N\tau} \vphi(f_sw)ds$ over all $w \in \chi_J$, \eqref{eq: 8} implies that the $(N\tau,\delta/3)$-separated subset $\chi_J$ satisfies 
\begin{equation}\label{eq: 10}
\sum\limits_{w \in \chi_J} e^{\int_0^{N\tau} \vphi(f_sw)ds} \geq e^{-\alpha N \T\|\vphi\|}\prod_{i=1}^{\alpha N} \beta e^{t_{i}P(\Sing,\vphi)} \geq e^{-\alpha NQ}e^{(N\tau - \alpha N \T)P(\Sing,\vphi)},
\end{equation}
where $\beta: = C^{-1}e^{-4L\|\vphi\|}$ is from \eqref{eq: 8} and $Q:=\T\|\vphi\|-\log \beta$. We remark that the constants $Q$ and $\T$ do not depend $\alpha$. 

Note that for any $J \neq J' \in \mathbb{J}_N^\alpha$ and any $v \in \chi_J$ and $w \in \chi_{J'}$, there exists at least one $t \in \mathcal{A}$ such that one of $f_tv$ and $f_tw$ is $4\delta/3$-close to Sing while another is at least $5\delta/3$-away from Sing from \eqref{eq: 8.5}. In particular, $d_{N\tau}(v,w)\geq \delta/3$. This implies that the union $\bigcup\limits_{J \in \mathbb{J}_N^\alpha} \chi_J$ of each $(N\tau,\delta/3)$-separated set $\chi_J$ is still $(N\tau,\delta/3)$-separated. Moreover, it follows from \eqref{eq: 10} that
\begin{equation}\label{eq: 11}
\sum\limits_{w \in \bigcup\limits_{J \in \mathbb{J}_N^\alpha} \chi_J} e^{\int_0^{N\tau} \vphi(f_sw)dw} \geq {N-1 \choose \alpha N-1} e^{-\alpha NQ}e^{(N\tau - \alpha N \T)P(\Sing,\vphi)}
\end{equation}
because $\#\mathbb{J}_N^\alpha = {N-1 \choose \alpha N-1}$. 

As $\bigcup\limits_{J \in \mathbb{J}_N^\alpha} \chi_J$ is $(N\tau,\delta/3)$-separated, the outcome of taking a logarithm of the left hand side of $\eqref{eq: 11}$ followed by dividing by $N\tau$ and taking the limit as $N$ approaches $\infty$ is bounded above by $P(\vphi)$. Using the fact that $\frac{N-k}{\alpha N - k }\geq \frac{1}{\alpha}$ for all $1 \leq k \leq \alpha N$, we have ${N-1 \choose \alpha N - 1} \geq \alpha e^{(-\alpha \log \alpha)N}$. Hence, \eqref{eq: 11} implies that
$$P(\vphi) \geq - \frac{\alpha}{\tau} \log \alpha -\frac{\alpha Q}{\tau} +\Big(1- \frac{\alpha \T}{\tau}\Big)P(\Sing,\vphi).$$
In particular, we obtain \eqref{eq: final} and establish required pressure gap \eqref{eq: pressure gap} by choosing $\alpha$ sufficiently small in $(0,e^{-Q-\T P(\Sing,\vphi)})$.

\bibliographystyle{amsalpha}
\bibliography{BIB}

\providecommand{\bysame}{\leavevmode\hbox to3em{\hrulefill}\thinspace}
\providecommand{\MR}{\relax\ifhmode\unskip\space\fi MR }
\providecommand{\MRhref}[2]{%
  \href{http://www.ams.org/mathscinet-getitem?mr=#1}{#2}
}
\providecommand{\href}[2]{#2}
\begin{thebibliography}{BCFT18}

\bibitem[Bab02]{Babillot:2002bq}
Martine Babillot, \emph{On the mixing property for hyperbolic systems}, Israel
  J. Math. \textbf{129} (2002), 61--76. \MR{1910932}

\bibitem[Bal82]{ballmann1982axial}
Werner Ballmann, \emph{Axial isometries of manifolds of non-positive
  curvature}, Mathematische Annalen \textbf{259} (1982), no.~1, 131--144.

\bibitem[BCFT18]{burns2018unique}
Keith Burns, Vaughn Climenhaga, Todd Fisher, and Daniel~J Thompson,
  \emph{Unique equilibrium states for geodesic flows in nonpositive curvature},
  Geometric and Functional Analysis \textbf{28} (2018), no.~5, 1209--1259.

\bibitem[Bow74]{bowen1974some}
Rufus Bowen, \emph{Some systems with unique equilibrium states}, Theory of
  computing systems \textbf{8} (1974), no.~3, 193--202.

\bibitem[Bur83]{Burns:1983dw}
Keith Burns, \emph{Hyperbolic behaviour of geodesic flows on manifolds with no
  focal points}, Ergodic Theory Dynam. Systems \textbf{3} (1983), no.~1, 1--12.
  \MR{743026}

\bibitem[CFT18]{Climenhaga:2015wf}
Vaughn Climenhaga, Todd Fisher, and Daniel~J. Thompson, \emph{Unique
  equilibrium states for {B}onatti-{V}iana diffeomorphisms}, Nonlinearity
  \textbf{31} (2018), no.~6, 2532--2570. \MR{3816730}

\bibitem[CFT19]{climenhaga2019equilibrium}
Vaughn Climenhaga, Todd Fisher, and Daniel~J Thompson, \emph{Equilibrium states
  for man{\'e} diffeomorphisms}, Ergodic Theory and Dynamical Systems
  \textbf{39} (2019), no.~9, 2433--2455.

\bibitem[CKP18]{chen2018unique}
Dong Chen, Lien-Yung Kao, and Kiho Park, \emph{Unique equilibrium states for
  geodesic flows over surfaces without focal points}, to appear in Nonlinearity
  (2018).

\bibitem[CKW19]{climenhaga2019uniqueness}
Vaughn Climenhaga, Gerhard Knieper, and Khadim War, \emph{{Uniqueness of the
  measure of maximal entropy for geodesic flows on certain manifolds without
  conjugate points}}, arXiv preprint arXiv:1903.09831v1 (2019).

\bibitem[CT16]{climenhaga2016unique}
Vaughn Climenhaga and Daniel~J Thompson, \emph{Unique equilibrium states for
  flows and homeomorphisms with non-uniform structure}, Advances in Mathematics
  \textbf{303} (2016), 745--799.

\bibitem[CT19]{call2019equilibrium}
Benjamin Call and Daniel~J Thompson, \emph{Equilibrium states for products of
  flows and the mixing properties of rank 1 geodesic flows}, arXiv preprint
  arXiv:1906.09315 (2019).

\bibitem[dC13]{doCarmo:2013tg}
Manfredo~P do~Carmo, \emph{{Riemannian Geometry}}, Birkh{\"a}user, January
  2013.

\bibitem[Ebe73a]{Eberlein1973II}
Patrick Eberlein, \emph{Geodesic flows on negatively curved manifolds ii},
  Transactions of the American Mathematical Society \textbf{178} (1973),
  57--82.

\bibitem[Ebe73b]{Eberlein:1973hu}
\bysame, \emph{{When is a geodesic flow of Anosov type? I}}, J. Differential
  Geometry \textbf{8} (1973), no.~3, 437--463. \MR{0380891}

\bibitem[Ebe96]{eberlein1996geometry}
\bysame, \emph{Geometry of nonpositively curved manifolds}, University of
  Chicago Press, 1996.

\bibitem[EO76]{eschenburg1976growth}
Jost-Hinrich Eschenburg and John~J O'Sullivan, \emph{Growth of jacobi fields
  and divergence of geodesics}, Mathematische Zeitschrift \textbf{150} (1976),
  no.~3, 221--237.

\bibitem[Esc77]{Eschenburg:1977kn}
Jost-Hinrich Eschenburg, \emph{Horospheres and the stable part of the geodesic
  flow}, Math. Z. \textbf{153} (1977), no.~3, 237--251. \MR{0440605}

\bibitem[Fra77]{Franco:1977jy}
Ernesto Franco, \emph{Flows with unique equilibrium states}, Amer. J. Math.
  \textbf{99} (1977), no.~3, 486--514. \MR{0442193}

\bibitem[GR19]{Gelfert:2017tx}
Katrin Gelfert and Rafael~O. Ruggiero, \emph{Geodesic flows modelled by
  expansive flows}, Proc. Edinb. Math. Soc. (2) \textbf{62} (2019), no.~1,
  61--95. \MR{3938818}

\bibitem[GS14]{Gelfert:2014hn}
Katrin Gelfert and Barbara Schapira, \emph{Pressures for geodesic flows of rank
  one manifolds}, Nonlinearity \textbf{27} (2014), no.~7, 1575--1594.
  \MR{3225873}

\bibitem[Hur86]{Hurley:1986km}
Donal Hurley, \emph{Ergodicity of the geodesic flow on rank one manifolds
  without focal points}, Proc. Roy. Irish Acad. Sect. A \textbf{86} (1986),
  no.~1, 19--30. \MR{865098}

\bibitem[Kat82]{katok1982entropy}
Anatole Katok, \emph{Entropy and closed geodesies}, Ergodic theory and
  dynamical Systems \textbf{2} (1982), no.~3-4, 339--365.

\bibitem[KH97]{katok1997introduction}
Anatole Katok and Boris Hasselblatt, \emph{Introduction to the modern theory of
  dynamical systems}, vol.~54, Cambridge university press, 1997.

\bibitem[Kni98]{knieper1998uniqueness}
Gerhard Knieper, \emph{The uniqueness of the measure of maximal entropy for
  geodesic flows on rank 1 manifolds}, Annals of mathematics (1998), 291--314.

\bibitem[Led77]{ledrappier1977mesures}
Fran{\c{c}}ois Ledrappier, \emph{Mesures d'{\'e}quilibre d'entropie
  compl{\`e}tement positive}, Ast{\'e}risque \textbf{50} (1977), 251--272.

\bibitem[LLW18]{Liu:2018ud}
Fei Liu, Xiaokai Liu, and Fang Wang, \emph{{On the mixing and Bernoulli
  properties for geodesic flows on rank 1 manifolds without focal points}},
  arXiv.org (2018).

\bibitem[LW16]{liu2016entropy}
Fei Liu and Fang Wang, \emph{Entropy-expansiveness of geodesic flows on closed
  manifolds without conjugate points}, Acta Mathematica Sinica, English Series
  \textbf{32} (2016), no.~4, 507--520.

\bibitem[LWW20]{liu2018patterson}
Fei Liu, Fang Wang, and Weisheng Wu, \emph{On the patterson-sullivan measure
  for geodesic flows on rank $1 $ manifolds without focal points}, Discrete and
  Continuous Dynamical Systems \textbf{40} (2020), no.~3, 1517 -- 1554.

\bibitem[O'S76]{OSullivan:1976cf}
John~J. O'Sullivan, \emph{Riemannian manifolds without focal points}, J.
  Differential Geometry \textbf{11} (1976), no.~3, 321--333. \MR{0431036}

\bibitem[Par88]{parry1988equilibrium}
William Parry, \emph{Equilibrium states and weighted uniform distribution of
  closed orbits}, Dynamical Systems, Springer, 1988, pp.~617--625.

\bibitem[Pes77]{Pesin:1977vi}
Ja.~B. Pesin, \emph{Geodesic flows in closed {R}iemannian manifolds without
  focal points}, Izv. Akad. Nauk SSSR Ser. Mat. \textbf{41} (1977), no.~6,
  1252--1288, 1447. \MR{0488169}

\bibitem[PP90]{Parry:1990tn}
William Parry and Mark Pollicott, \emph{{Zeta functions and the periodic orbit
  structure of hyperbolic dynamics}}, Ast\'erisque \textbf{187-188} (1990),
  1--268.

\bibitem[Wal00]{walters2000introduction}
Peter Walters, \emph{An introduction to ergodic theory}, Graduate Texts in
  Mathematics, vol.~79, Springer-Verlag, 2000.

\end{thebibliography}

\end{document}